\definecolor{dmagenta}{rgb}{.4,.1,.5}
\definecolor{dred}{rgb}{.6,.0,.0}
\definecolor{dgreen}{rgb}{.0,.5,.0}
\definecolor{dblue}{rgb}{.0,.0,.7}
\definecolor{violet}{rgb}{.3,.0,.9}
\definecolor{orange}{cmyk}{0,.5,.1,.0}
\definecolor{dcyan}{cmyk}{.5,.0,.0,.0}
\definecolor{dyellow}{cmyk}{.0,.0,.5,.0}
\numberwithin{equation}{section}
\newtheorem{theorem}{Theorem}[section]
\newtheorem{lemma}{Lemma}[section]
\theoremstyle{definition}
\newtheorem{assumption}{Assumption}
\theoremstyle{remark}
\newtheorem{remark}{Remark}[section]
\newcommand{\df}{\;{:=}\;}
\newcommand{\transp}{^{\mathsf{T}}}
\DeclareMathOperator*{\trace}{tr}
\newcommand{\sV}{\mathcal{V}}%Lyapunov
\newcommand{\RR}{\mathbb{R}}
\newcommand{\Rd}{\mathbb{R}^d}
\newcommand{\NN}{\mathbb{N}}
\newcommand{\abs}[1]{\lvert#1\rvert}
\newcommand{\norm}[1]{\lVert#1\rVert}
\newcommand{\Ind}{\mathds{1}}   % indicator function
\newcommand{\grad}{\nabla}
\begin{document}

\title[Risk-Sensitive SDG]{Nonzero-Sum Risk-Sensitive Stochastic Differential Games: A Multi-parameter Eigenvalue Problem Approach}
%\thanks{*This paper is dedicated to the memory of Ari Arapostathis.}

\author{Mrinal K. Ghosh}
\address{Department of Mathematics, Indian Institute of Science Bangalore,
Bengaluru, India}
\email{mkg@iisc.ac.in}
%\thanks{Supported in part by  a grant (MATRIX, SERB)
%from the Department of Science and Technology, Government of India}
\author{K.\ Suresh Kumar}
\address{Department of Mathematics, Indian Institute of Technology Bombay,
Powai, Mumbai 400076, India}
\email{suresh@math.iitb.ac.in}
%\thanks{Supported in part by  a grant (MATRIX, SERB)
%from the Department of Science and Technology, Government of India\\
%This paper is dedicated to the memory of Ari Arapostathis.}

\author{Chandan Pal}
\address{Department of Mathematics, Indian Institute of Technology Guwahati,  Guwahati-781039}
\email{cpal@iitg.ac.in}
\author{Somnath Pradhan}
\address{Department of Mathematics and Statistics,  Queen’s University, Kingston, ON, Canada}
\email{sp165@queensu.ca}
%\email{suresh@math.iitb.ac.in}
%\thanks{Supported in part by  a grant (MATRIX, SERB)
%from the Department of Science and Technology, Government of India}

\begin{abstract}
We study  nonzero-sum stochastic differential games with risk-sensitive ergodic cost criterion. Under certain conditions, using multi parameter  eigenvalue approach, we establish the existence of a Nash equilibrium in the space of stationary Markov strategies. We achieve our results by studying the relevant systems of coupled HJB equations. Exploiting the stochastic representation of the principal eigenfunctions we completely characterize Nash equilibrium points in the space of stationary Markov strategies.
\end{abstract}

\maketitle

%\textit{MSC 2010 subject classifications:} Primary  60J60, Secondary 60F10, 93E20
%\medskip

\textit{Key words and phrases:} Risk-sensitive cost criterion, parametric family of Markov generators, principal eigenvalue, Nash equilibrium, Hamilton-Jacobi-Bellman.

\section{Introduction} We study non zero-sum risk-sensitive stochastic differential games in a multi parameter eigenvalue problem framework. In the literature of stochastic differential games, one usually considers the expectation of the integral of costs (\cite{Borkar-Ghosh}, \cite{ED}, \cite{Va} etc). This is the so called risk-neutral situation where
 the players (i.e., the decision makers or controllers) ignore the risk. If the players are risk-sensitive (i.e., risk-averse or risk-seeking), then one of the most appropriate cost criteria is the expectation of the exponential of the integral of costs as it leads to certainty equivalence \cite{No}. Since the cost criterion is the expectation of the exponential of the integral costs, it is multiplicative as opposed to the additive nature of the cost criterion in the expectation of the integral costs case. Due to this, the analysis of the risk-sensitive case is significantly different from its risk-neutral counterpart. To our knowledge, the risk-sensitive criterion was first introduced by Bellman \cite{RB}; see \cite{PW} and the references therein.
Though this criterion has been studied extensively for stochastic optimal control problems \cite{arapostathis_biswas_saha}, \cite{arapostathis_biswas}, \cite{AAABSP21}, \cite{AAAB2020A}, \cite{BN},\cite{BFN},  \cite{Anup}, \cite{AnupBorkarSuresh}, \cite{FH}, \cite{FM}, \cite{J}, \cite{MR}, \cite{Na}, \cite{SP21}, \cite{Ru}, the corresponding literature in the context of stochastic differential games is rather limited. Some exceptions are \cite{Ba}, \cite{BG}, \cite{Biswas_Saha}, \cite{EH}. Basar \cite{Ba} proves the existence of a  Nash equilibrium for finite horizon nonzero-sum risk sensitive games.  El-Karoui and Hamadene \cite{EH} study risk-sensitive control, zero-sum and nonzero-sum game problems. They prove the existence of an optimal control, a saddle-point and a Nash equilibrium point for relevant cases. In \cite{EH}, authors use Pontrayagin's minimum principle to characterize the optimality condition and the adjoint problem leads to some special  backward stochastic
differential equations. Basu and Ghosh \cite{BG}  study infinite horizon risk-sensitive zero-sum stochastic differential games and establish the
existence of saddle points which are mini-max selectors of the associated Hamilton-Jacobi-Isaacs (HJI) equation. In a recent work Biswas and Saha \cite{Biswas_Saha} consider risk-sensitive zero-sum stochastic differential games for controlled diffusion process in $\mathbb{R}^{d}$. Under fairly general conditions on the  drift and  the diffusion coefficients (e.g.,the  coefficients are locally Lipschitz continuous and have some global growth condition), they  study the ergodic cost criterion. They  completely characterize saddle point equilibria in the space of stationary Markov strategies, under the assumption that running cost function satisfies either small cost condition or dominated by some inf-compact function.

In the framework of reflecting diffusions Ghosh and Pradhan \cite{MKGSP20A} (in bounded domain), \cite{MKGSP22A} (in orthant) have studied similar nonzero-sum game problem for risk-sensitive ergodic cost criterion. Using principal eigenvalue approach, under the assumption that drift term, diffusion matrix and running cost functions are uniformly bounded, they have completely characterized all possible Nash equilibria in the space of stationary Markov strategies.

In this paper we address the existence of Nash equilibria for stochastic differential games where the state of the system is governed by a controlled diffusion processes in the whole space $\RR^d$\,. We consider the risk-sensitive ergodic cost evaluation criterion. We analyze this game problem by analyzing the corresponding system of coupled HJB equation, which is a system of coupled semi-linear elliptic pdes. Under certain conditions, using principal eigenvalue approach we establish the existence of a Nash equilibrium in the space of stationary Markov strategies. In order to establish the existence of principal eigenpair of the associated coupled system of Hamilton-Jacobi-Bellman (HJB) equation, we first study the corresponding Dirichlet eigenvalue problem on smooth bounded domains in $\RR^d$. Applying a version of non-linear Krein–Rutman theorem we show that principal eigenpair exists for Dirichlet eigenvalue problem. Then increasing these domains to $\RR^d$ and employing Fan's fixed  point theorem \cite{Fan} we establish the existence of principal eigenpair to the associated coupled system of HJB equation in the whole space $\RR^d$, which lead to the existence of a Nash equilibrium. Furthermore, exploiting the stochastic representation of the principal eigenfunctions we completely characterize all possible Nash equilibria in the space of stationary Markov strategies. Thus, the main results of this article can be roughly described as follows.
\begin{itemize}
\item[•]\emph{Existence and uniqueness of solution to the coupled HJB equation:} Using Principal eigenvalue approach, we establish the existence and uniqueness of solution to the associated coupled HJB equation in an appropriate function space.
\item[•]\emph{Characterization of Nash equilibrium:} Using Fan's fixed point theorem we first establish the existence of Nash equilibrium in the space of stationary Markov startegies. Then utilizing the stochastic representation of the principal eigenfunctions we completely characterize all possible Nash equilibria in the space of stationary Markov strategies\,.
\end{itemize}
The rest of this paper is organized as follows. Section 2 deals with the problem description. In Section 3 we discuss the principal eigenvlue problem for controlled diffusion operators on smooth bounded domains. Section 4 is devoted to study the eigenvlaue problem for controlled diffusion operator in whole space $\RR^d$\,. The complete characterization of Nash equilibrium in the space of stationary Markov strategies is presented in Section 5\,.
%Section 5 contains some concluding remarks.

\section{Problem Description}
For the sake of notational simplicity we treat two player case.
Let $U_i, i =1,2$ be compact metric spaces and $V_i = {\mathcal P}(U_i)$,  the space of probability  measures on the compact metric space $U_i$ with the topology of weak convergence.
Let
$b : \mathbb{R}^d \times V_1 \times V_2 \to \mathbb{R}^d, \, r_i : \mathbb{R}^d \times V_1 \times V_2 \to \mathbb{R}_{+}$
and  $\sigma : \mathbb{R}^d \to \mathbb{R}^{d \times d}$ be functions such that there exists $\Bar b : \mathbb{R}^d \times U_1 \times U_2 \to \mathbb{R}^d, \, \Bar r_i : \mathbb{R}^d \times U_1 \times U_2 \to \mathbb{R}_{+}$
satisfying
\begin{eqnarray*}
b(x, v_1, v_2)  &  =  &  \iint \Bar b(x, u_1, u_2) v_1(du_1) v_2(du_2),  \\
r_i(x, v_1, v_2)  &  =  &  \iint \Bar r_i(x, u_1, u_2) v_1(du_1) v_2(du_2),  \  i = 1,2,
\end{eqnarray*}
where $\bar{b}, \bar{r}_i, \sigma$ are given functions. 
We consider a nonzero-sum stochastic differential game whose state is evolving according to a controlled diﬀusion process given by the solution of the following stochastic differential equation (s.d.e.)
\begin{equation}\label{statedynamics}
d X(t) =  b(X(t), v_1(t), v_2(t)) dt + \sigma (X(t)) dW(t),
\end{equation}
where $W(\cdot)$ is an $\mathbb{R}^d$-valued standard Wiener process, $v_{i}(\cdot)$ is a $V_{i}$-valued process which is a non-anticipative functional of the state process $X(\cdot),$ i.e., $v_{i}(t) = f_{i}(t, X([0,t]))$ where $X([0,t])(s) = X(s\wedge t)$ for all $s\in [0,\infty)$ and $f_i:[0,\infty)\times C([0,\infty); \mathbb{R}^d)\to V_{i}$. Such a strategy is called an admissible strategy. For $i=1,2,$ $\mathcal{A}_{i}$ denotes the space of all admissible strategies of Player $i.$ In order to ensure the existence of a solution to the equation \ref{statedynamics}, we impose following conditions on the drift term $\bar{b}$, the  dispersion  matrix $\sigma$\,, and the running cost functions $\bar{r}_i, \: i = 1, 2.$
\begin{assumption}\label{A1}
\begin{itemize}
\item[(i)]
\emph{Local Lipschitz continuity:\/}
The function $\sigma\,=\,\bigl[\sigma^{ij}\bigr]\colon\RR^{d}\to\RR^{d\times d}$, $\bar{b}\colon\Rd\times U_1 \times U_2\to\Rd$ and $\bar{r}_i\colon\Rd\times U_1 \times U_2\to\RR_{+}$ are locally Lipschitz continuous in $x$ (uniformly with respect to the rest), i.e., for each $R \geq 0$, there exists a constant $C_{R}>0$ depending on $R>0$, such that
\begin{equation*}
\abs{\bar{b}(x,u_1, u_2) - \bar{b}(y, u_1, u_2)}^2 + \abs{\bar{r}_i(x,u_1, u_2) - \bar{r}_i(y, u_1, u_2)}^2 + \norm{\sigma(x) - \sigma(y)}^2 \,\le\, C_{R}\,\abs{x-y}^2
\end{equation*}
for all $x,y\in B_R$, $i=1,2$ and $(u_1, u_2)\in U_1\times U_2$, where $\norm{\sigma}\df\sqrt{\trace(\sigma\sigma\transp)}$\,. Also, we  assume that $b, r_i$ are  jointly continuous in $(x, u_1, u_2)$ for $i=1,2$.
\medskip
\item[(ii)]
\emph{Affine growth condition:\/} $\bar{b}$ and $\sigma$ satisfy a global growth condition of the form
\begin{equation*}
\sup_{u_1\in U_1, u_2\in U_2}\, \langle \bar{b}(x, u_1, u_2),x\rangle^{+} + \norm{\sigma(x)}^{2} \,\le\,C_0 \bigl(1 + \abs{x}^{2}\bigr) \qquad \forall\, x\in\RR^{d},
\end{equation*}
for some constant $C_0>0$.

\medskip
\item[(iii)]
\emph{Nondegeneracy:\/} For each $R>0$, it holds that
\begin{equation*}
\sum_{i,j=1}^{d} a^{ij}(x)z_{i}z_{j}
\,\ge\,C^{-1}_{R} \abs{z}^{2} \qquad\forall\, x\in B_{R}\,,
\end{equation*}
and for all $z=(z_{1},\dotsc,z_{d})\transp\in\RR^{d}$,
where $a\df \frac{1}{2}\sigma \sigma\transp$.
\end{itemize}
\end{assumption}
It is well known that, under Assumption~\ref{A1}, for any $(v_1, v_2)\in \mathcal{A}_1\times \mathcal{A}_2$ and initial condition $X(0) =x$, the s.d.e. (\ref{statedynamics}) admits a unique weak
solution which is a strong Markov process (see \cite[Theorem 2.2.11, p.42]{AriBorkarGhosh}). For the stochastic differential game, the controlled diffusion given by (\ref{statedynamics}) has the following interpretation: The $i$th player controls the state dynamics, i.e., the controlled diffusion given above, through the choice of her/his strategy $v_{i}$. The function $\bar{r}_i$ represents the running cost function of Player $ $i. If the strategy $v_{i}$ has the form $v_{i}(t) = \Bar{v}_{i}(t, X(t)), t \geq 0$ for some $\bar{v}_i : [0, \ \infty) \times \mathbb{R}^d \to V_i$, then $v_{i}$ or by an abuse of notation $\Bar{v}_i$ is called a Markov strategy for Player $i$. Let ${\mathcal M}_i \ = \
\{ v_i : [0, \ \infty) \times \mathbb{R}^d \to V_i \; | \; v_i \ {\rm is\ measurable}\}$ be the set of all Markov strategies for Player $i$. Under a pair of Markov strategies the s.d.e. (\ref{statedynamics}) admits a unique strong solution which is a strong Markov process (see \cite[Theorem 2.2.12, p.45]{AriBorkarGhosh}). If $v_i$ doesn't have explicit dependence on $t$, i.e., $\bar{v}_i(t, x) = \bar{v}_i(x), \ x \in \mathbb{R}^d, \ t \geq 0$, it is said to be a stationary Markov strategy for Player $i$. The set of all stationary Markov strategies for Player $i$ is denoted by ${\mathcal S}_i, \ i =1,2$. We topologize ${\mathcal S}_i, \ i=1,2$, using a metrizable weak* topology on $L^{\infty}(\mathbb{R}^d ; {\mathcal M}_s(U_i))$, where ${\mathcal M}_{s}(U_i)$ denotes the space of all signed measures on $U_i$ with weak* topology. Since ${\mathcal S}_i$ is a  subset of the unit ball of $L^{\infty}(\mathbb{R}^d ; {\mathcal M}_s(U_i))$, it is compact under the above weak* topology. One also has the following characterization of the topology given by the following convergence criterion:\\
For $i =1,2$, $v^n_i \to v_i$ in ${\mathcal S}_i$ as $n \to \infty$  if and only if
\begin{equation}\label{convergencecriterion}
\lim_{n \to \infty} \int_{\mathbb{R}^d} f(x) \int_{U_i} g(x, u_i) v^n_i(x)(du_i) dx \ = \  \int_{\mathbb{R}^d} f(x) \int_{U_i} g(x, u_i) v_i(x)(du_i) dx ,
\end{equation}
for all $f \in L^1(\mathbb{R}^d) \cap L^2(\mathbb{R}^d), \ g \in C_b(\mathbb{R}^d \times U_i)$; see \cite[p.57]{AriBorkarGhosh},  for details.

For $ v_i \in V_i, i =1,2$,  let 
${\mathcal L}^{v_1, v_2} : C^2(\mathbb{R}^d)   \to C(\mathbb{R}^{d}), $   be given by
\begin{equation}\label{controlleddiffusiononwholespace}
{\mathcal L}^{v_1, v_2} f (x)  \  =  \ a_{ij} (x) \frac{\partial^2 f(x)}{\partial x_i \partial x_j} + b_i (x, v_1, v_2) \frac{\partial f(x)}{\partial x_i},  \ f \in C^2(\mathbb{R}^d),
\end{equation}
where Einstein summation convention is used.
Further, let
 \begin{eqnarray}\label{nonlinearoperators}
 {\mathcal G}^{v_2}_1 f & = & \inf_{v_1 \in V_1} [ {\mathcal L}_1^{v_1, v_2} f
 + r_1 (x, v_1, v_2(x)) f ], \ v_2 \in {\mathcal S}_2, \\ \nonumber
  {\mathcal G}^{v_1}_2 f & = & \inf_{v_2 \in V_2} [ {\mathcal L}_2^{v_1, v_2} f
 + r_2 (x, v_1(x), v_2) f] , \ v_1 \in {\mathcal S}_1 ,  f \in C^2(\mathbb{R}^d),
 \end{eqnarray}
 where for $f \in C^2(\mathbb{R}^d)$,
 \[
 {\mathcal L}_1^{v_1, v_2} f (x)  \  =  \  {\mathcal L}^{v_1, v_2(x)} f (x) \,\,\,\forall\,\,v_1\in V_1, \,\,v_2 \in {\mathcal S}_2
\]
and
\[
{\mathcal L}_2^{v_1, v_2} f (x)  \  =  \  {\mathcal L}^{v_1(x), v_2} f (x) \,\,\,\forall \,\,v_1 \in {\mathcal S}_1\,, \,\,v_2\in V_2\,.
\] For $(v_1, v_2) \in {\mathcal S}_1\times {\mathcal S}_2$, it is easy to see that $${\mathcal L}_1^{v_1, v_2} f(x) = {\mathcal L}_2^{v_1, v_2}f(x) = {\mathcal L}^{v_1, v_2}f(x) =  a_{ij} (x) \frac{\partial^2 f(x)}{\partial x_i \partial x_j} + b_i (x, v_1(x), v_2(x)) \frac{\partial f(x)}{\partial x_i}\,.$$ Our analysis is based on solving the eigenvalue problem associated with the above given operators.

\subsection{Ergodic Cost Criterion}
Given the running cost functions $r_i : \mathbb{R}^d \times V_1 \times V_2  \to \mathbb{R}_{+}, i =1,2$, for any $(v_1, v_2)\in \mathcal{A}_{1}\times \mathcal{A}_{2}$,  the associated risk-sensitive ergodic cost of Player $i$ is defined by
\begin{equation}\label{riskcost}
\rho_i (x, v_1, v_2) \  =  \  \limsup_{T \to \infty} \frac{1}{T} \log \mathbb{E}^{v_1, v_2}_x
\Big[ e^{\int^T_0 r_i (X(t), v_1(t), v_2(t)) dt } \Big],  i =1,2.
\end{equation}
The definition of a  Nash equilibrium is standard, i.e.,  $(v^*_1, v^*_2) \in {\mathcal A}_1 \times
{\mathcal A}_2 $ is a Nash equilbribrium among the class of admissible strategies if
\begin{eqnarray}\label{Nashequilibrium}
\rho_1 (x, v^*_1, v^*_2) & \leq & \rho_1 (x, v_1, v^*_2), \ {\rm for\ all} \
v_1 \in {\mathcal A}_1 , \\ \nonumber
\rho_2 (x, v^*_1, v^*_2) & \leq & \rho_2 (x, v^*_1, v_2), \ {\rm for\ all} \
v_2 \in {\mathcal A}_2, \ {\rm for\ all} \ x \in \mathbb{R}^d.
\end{eqnarray}
We assume that our running cost functions $r_i$\,, $i= 1,2$ satisfy Assumption~\ref{A1}(i)\,.
Now for each $(v_1 , v_2) \in {\mathcal A}_1 \times {\mathcal A}_2$, define
\begin{eqnarray}\label{optimalresponses}
 \lambda_1 (x, v_2) & = & \inf_{v'_1 \in {\mathcal A}_1} \rho_1 (x, v'_1, v_2) ,
 \ \lambda_1 (v_2) \  =  \  \inf_{x \in \mathbb{R}^d} \lambda_1 (x, v_2),\\ \nonumber
  \Lambda_1 (x, v_2) & = & \inf_{v'_1 \in {\mathcal S}_1} \rho_1 (x, v'_1, v_2) ,
 \ \Lambda_1 (v_2) \  =  \  \inf_{x \in \mathbb{R}^d} \Lambda_1 (x, v_2),\\ \nonumber
\lambda_2 (x, v_1) & = & \inf_{v'_2 \in {\mathcal A}_2} \rho_2 (x, v_1, v'_2) ,
 \ \lambda_2 (v_1) \  =  \  \inf_{x \in \mathbb{R}^d} \lambda_2 (x, v_1),\\ \nonumber
  \Lambda_2 (x, v_1) & = & \inf_{v'_2 \in {\mathcal S}_2} \rho_1 (x, v_1, v'_2) ,
 \ \Lambda_2 (v_1) \  =  \  \inf_{x \in \mathbb{R}^d} \Lambda_2 (x, v_1).
 \end{eqnarray}
%%%%%%%%%%%%%%%%%%%%%%%%%%%%%%%%%%%%%%%%%%%%%%%%%%%%%%%%%%%%%%%%%%%%%%%%%%% 
Now we outline our programme for establishing the existence of a Nash equilibrium. We analyze our game problem by analyzing the corresponding system of coupled Hamilton-Jacobi-Bellman (HJB) equations. Suppose that one of the players, say Player 2 announces his strategy $v_2 \in \mathcal{S}_2$ in advance, then Player 1 tries to minimize associated cost $\rho_1(x, v_1, v_2)$ (see, eq. (\ref{riskcost})) over all $v_1 \in \mathcal{A}_1$, which is a (stochastic) optimal control problem for Player 1. Such an optimal control problem has been studied in \cite{arapostathis_biswas_saha}, \cite{Anup}, \cite{AnupBorkarSuresh} and it is shown that one can characterize the optimal value and optimal controls by analyzing the corresponding HJB equation given by
\begin{eqnarray}\label{EOutA}
\lambda_1  \psi_1(x) = \mathcal{G}_1^{v_2}\psi_ 1(x) \quad\text{with}\quad\psi_1(0) = 1\,.
\end{eqnarray} It is well known that (see \cite{arapostathis_biswas_saha}) the principal eigenvalue of the HJB equation is the optimal value and any minimizing selector (\ref{EOutA}), i.e., any $v_1^*\in\mathcal{S}_1$ which satisfies $$\mathcal{G}_{1}^{v_2}\psi_1 = \mathcal{L}_{1}^{v_1^*, v_2}\psi_1 + r_1(x, v_1^*(x), v_2(x))\psi_1$$ is an optimal control for Player 1\,. In particular, $v_1^* \in \mathcal{S}_1$ is an optimal response for Player $1$ corresponding the announced strategy $v_2$ of Player $2$\,. Note that $v_1^*$ depends on $v_2$ and the map
$$ v_2 \,(\in \mathcal{S}_2) \to {\rm the\; optimal \; responses\; of \; Player 1} $$ may be multi-valued. Analogous result holds for Player $2$ if Player $1$ announces his strategy $v_1 \in \mathcal{S}_1$ in advance. From the above discussion, it is easy to see that for any given pair of strategies $(v_1,v_2) \in \mathcal{S}_1 \times \mathcal{S}_2$, one can construct a set of pairs of optimal responses $\{(v_1^*,v_2^*) \in \mathcal{S}_1 \times \mathcal{S}_2\}$ from their corresponding HJB equations. Clearly any fixed point of this multi-valued map is a Nash equilibrium. The above discussion leads to the following programme for finding a pair of Nash equilibrium strategies for ergodic cost criterion. Suppose that there exist a pair of stationary strategies $(v_1^*,v_2^*) \in \mathcal{S}_1 \times \mathcal{S}_2$, a pair of scalars $(\lambda_1, \lambda_2)$ and a pair of functions $(\psi_1,\psi_2)$ in an appropriate function space satisfying the following coupled HJB equations
\interdisplaylinepenalty=0
\begin{align*}
\lambda_1 \psi_1 &= \mathcal{G}_{1}^{v_2^*}\psi_1 = \mathcal{L}_{1}^{v_1^*, v_2^*}\psi_1 + r_1(x, v_1^*(x), v_2^*(x))\psi_1 \\
\lambda_2 \psi_2 &= \mathcal{G}_{2}^{v_1^*}\psi_2 = \mathcal{L}_{2}^{v_1^*, v_2^*}\psi_2 + r_2(x, v_1^*(x), v_2^*(x))\psi_2\,,
\end{align*}
then $(v_1^*,v_2^*)$ will be a pair of Nash equilibrium. 
The above discussion leads us to study the principal eigenvalues associated with the above coupled equations in the subsequent sections.

\section{Dirchlet eigenvalue problem for controlled diffusion operators}
In this section, we  discuss the principal eigenvalue problem
associated with the nonlinear operators ${\mathcal G}^{v_j}_i$ on smooth bounded domains $D\subset \RR^d$\,. The generalized principal eigenvalue of the semi-linear operator ${\mathcal G}^{v_j}_i$ with Dirichlet boundary condition on $D$ is defined by
\begin{equation}\label{dirichletprincipaleigen}
 \lambda^+_{i}(v_j, D) \  =  \ \inf \{ \lambda \in \mathbb{R} \mid \ {\rm for\ some} \  \varphi \in
 W^{2, p}_{loc}(D) \cap C(\Bar{D}), \varphi > 0, {\mathcal G}^{v_j}_i \varphi \leq \lambda \varphi
 \ {\rm in} \ D \},\,\, i, j = 1, 2.
 \end{equation} Now we  prove the existence of the principal eigenvalues of a certain parametric family of semi-linear elliptic pdes.
\begin{theorem}\label{Dirichleteigenvalue} Let $v_j \in  {\mathcal S}_j$
and $D$ be a bounded smooth domain in $\mathbb{R}^d$. Then there exists (unique upto a scalar multiplication)
$\psi_D \in W^{2,p} (D) \cap C (\Bar{D}),  \psi_D > 0$ such that
\begin{eqnarray}\label{Dirichleteigenvalueproblem}
{\mathcal G}^{v_j}_i  \psi_D  &  =  &   \lambda^+_i(v_j, D)  \psi_D,  \\ \nonumber
\psi_D & = & 0 \  {\rm on} \  \partial D\,,\,\quad i, j = 1,2\,.
\end{eqnarray}
\end{theorem}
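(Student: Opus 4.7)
My plan is to apply a nonlinear Krein--Rutman-type argument to the concave, positively $1$-homogeneous, uniformly elliptic operator $\mathcal{G}_i^{v_j}$ on the smooth bounded domain $D$. The key structural features I will exploit are: $\mathcal{G}_i^{v_j}(t\psi) = t\,\mathcal{G}_i^{v_j}(\psi)$ for $t\geq 0$; $\mathcal{G}_i^{v_j}$ is an infimum of linear operators, hence concave in $\psi$; and the coefficients $\bar{b},\bar{r}_i$ are continuous in the control variable and bounded on $\bar{D}\times U_1\times U_2$, while $\sigma$ is uniformly nondegenerate on $\bar{D}$ by Assumption~\ref{A1}.

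I treat the case $i=1, j=2$; the other case is symmetric. For each measurable selector $w:D\to V_1$, the linear operator
\[
L^w \varphi := \mathcal{L}^{w(x),\,v_2(x)}\varphi + r_1(x, w(x), v_2(x))\,\varphi
\]
is uniformly elliptic with bounded measurable coefficients, and by the Berestycki--Nirenberg--Varadhan (BNV) theory of principal eigenvalues on smooth bounded domains, admits a unique principal eigenvalue $\mu(w)\in\mathbb{R}$ and (up to scalar) positive eigenfunction $\varphi_w \in W^{2,p}(D)\cap C(\bar{D})$ vanishing on $\partial D$. Set $\lambda^\star := \inf_w \mu(w)$, which is finite by the uniform coefficient bounds together with the BNV variational characterization. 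Take a minimizing sequence $w_n$ with corresponding eigenfunctions $\varphi_n$ normalized by $\varphi_n(x_0)=1$ at some fixed $x_0\in D$. Using Harnack's inequality and interior/boundary $W^{2,p}$ estimates, together with a standard exterior-ball Hopf-type barrier, a subsequence converges strongly in $C(\bar{D})$ and weakly in $W^{2,p}(D)$ to a limit $\varphi_\star\geq 0$ with $\varphi_\star|_{\partial D}=0$ and $\varphi_\star(x_0)=1$. By a standard measurable selection theorem there exists a measurable $w^\star:D\to V_1$ attaining the pointwise infimum in the definition of $\mathcal{G}_1^{v_2}\varphi_\star$. Passing to the limit in $L^{w_n}\varphi_n = \mu(w_n)\varphi_n$ tested against any fixed $v_1\in V_1$ yields $\mathcal{G}_1^{v_2}\varphi_\star \leq \lambda^\star\varphi_\star$ a.e.\ in $D$, and the reverse inequality follows from the BNV variational characterization applied to $L^{w^\star}$ (otherwise $\mu(w^\star)<\lambda^\star$, contradicting the definition of $\lambda^\star$). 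Hence $\mathcal{G}_1^{v_2}\varphi_\star = \lambda^\star\varphi_\star$ with $\lambda^\star = \lambda^+_1(v_2, D)$, and the strong maximum principle applied to $L^{w^\star}$ yields $\varphi_\star > 0$ in $D$.

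For uniqueness up to scalar multiplication, suppose $\psi_1,\psi_2$ are two positive eigenfunctions for $\lambda^+_1(v_2,D)$. By the Hopf boundary point lemma, $t^\star := \inf\{t>0 : t\psi_2 \geq \psi_1 \text{ in } D\}$ is finite and positive. The function $w := t^\star\psi_2 - \psi_1 \geq 0$ touches zero somewhere in $D$ and, using the concavity of $\mathcal{G}_1^{v_2}$ together with a minimizing selector $w^\star$ for $\psi_1$, satisfies the linear inequality $L^{w^\star}w \leq \lambda^+_1(v_2,D)\,w$; the strong maximum principle then forces $w\equiv 0$, so $\psi_1 = t^\star\psi_2$. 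The main obstacle is the identification step: the pointwise infimum over $v_1\in V_1$ of a sequence of linear operators does not commute with weak convergence of solutions, so one cannot pass to the limit directly in the nonlinear equation for $\varphi_n$. This difficulty is resolved by combining measurable selection for the argmin associated to the limit $\varphi_\star$ with the BNV characterization of $\lambda^\star$, which together make the nonlinear Krein--Rutman framework effective in this concave elliptic setting.
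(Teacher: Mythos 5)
Your argument takes a genuinely different route from the paper's. The paper first proves solvability of the nonlinear Dirichlet problem $\mathcal{G}_1^{v_2}\varphi=-f$, $\varphi=0$ on $\partial D$, by a Leray--Schauder fixed point argument, then applies the nonlinear Krein--Rutman theorem (Theorem~\ref{TAA.1}) to the resulting solution operator $\hat{\mathfrak{T}}$ (order preservation coming from the Quaas--Sirakov comparison results), and finally identifies the eigenvalue so produced with $\lambda^+_1(v_2,D)$ via Theorem~\ref{auxillaryresult}. You instead work in policy space: minimize the linear principal eigenvalue $\mu(w)$ over measurable selectors $w$, extract a limit of the corresponding linear eigenfunctions, and identify the limit as a nonlinear eigenfunction with eigenvalue $\lambda^\star=\inf_w\mu(w)$ by measurable selection. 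This is a legitimate alternative (close in spirit to how the paper itself treats the whole-space problem in Theorems~\ref{existenceeigenvalue} and~\ref{RepPrinc1}); it trades the Leray--Schauder/Krein--Rutman machinery for the linear BNV theory plus compactness, at the cost of having to supply every rigidity statement by hand. Two caveats on the existence half: the limit passage should not be phrased as testing $L^{w_n}\varphi_n=\mu(w_n)\varphi_n$ ``against a fixed $v_1$''; the correct route is the supersolution inequality $\mathcal{G}_1^{v_2}\varphi_n\le L^{w_n}\varphi_n=\mu(w_n)\varphi_n$, which survives the limit because the nonlinear first-order part converges locally uniformly (from $C^1$ convergence) while $a_{ij}\partial_{ij}\varphi_n$ converges only weakly in $L^p$. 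More importantly, your contradiction ``otherwise $\mu(w^\star)<\lambda^\star$'' does not follow from the variational characterization alone, which only yields $\mu(w^\star)\le\lambda^\star$: you need the rigidity statement that a positive supersolution of $L^{w^\star}-\lambda^\star$ vanishing on $\partial D$ at the level of the principal eigenvalue must be the principal eigenfunction, i.e.\ the linear analogue of Theorem~\ref{auxillaryresult} (Quaas--Sirakov, or BNV). With that quoted, the identification $\mathcal{G}_1^{v_2}\varphi_\star=\lambda^\star\varphi_\star$ and $\lambda^\star=\lambda^+_1(v_2,D)$ (the latter also needing the easy remark that any positive $\varphi$ with $\mathcal{G}_1^{v_2}\varphi\le\lambda\varphi$ produces, via measurable selection, a $w$ with $\mu(w)\le\lambda$) is sound.

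The uniqueness step as written contains a genuine sign error. If $w^\star$ is a minimizing selector for $\psi_1$, then $L^{w^\star}\psi_1=\lambda\psi_1$ while $L^{w^\star}\psi_2\ge\mathcal{G}_1^{v_2}\psi_2=\lambda\psi_2$, so for $h=t^\star\psi_2-\psi_1$ one gets $(L^{w^\star}-\lambda)h\ge 0$, the opposite of the inequality you assert, and the minimum principle cannot be applied to $h$. The standard fix, which is exactly the paper's device in Theorem~\ref{existenceeigenvalue}, is either to keep the selector of $\psi_1$ but work with $\psi_1-t\,\psi_2$ where $t$ is the largest constant with $\psi_1\ge t\,\psi_2$, or to keep $h=t^\star\psi_2-\psi_1$ but use a minimizing selector for $\psi_2$. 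In addition, ``touches zero somewhere in $D$'' is not automatic: since both eigenfunctions vanish on $\partial D$, the contact at the optimal constant may occur only on the boundary, and this case must be handled either by Hopf's boundary point lemma applied to $h$ at the contact point or by the small-domain maximum principle route the paper uses (Lemma~\ref{maximumprinciplesmalldomain} together with Theorem~\ref{auxillaryresult}). With these repairs your proof goes through.
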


\begin{proof} We take $i=1, j=2$. Suppose $r_1 \leq 0$ (this will be dropped shortly). For $\phi\in C^{1}_{0}(D) (:= C_{0}(\bar{D})\cap C^{1}(D))$, $f \in L^p(D)$,  
 let $$\Gamma_1(\phi, f)(x) = - \inf_{v_1\in V_1}\{b_i (x, v_1, v_2(x)) \frac{\partial \phi(x)}{\partial x_i} + r_1 (x, v_1, v_2(x)) \phi(x)\} + f(x),$$ and consider
\begin{equation}\label{ET3.2A1}
a_{ij} (x) \frac{\partial^2 \hat{\phi}(x)}{\partial x_i \partial x_j} = \Gamma_1(\phi, f)(x)\,,\quad\text{with}\,\,\, \hat{\phi} = 0 \,\,\,\text{on}\quad \partial{D}\,.
\end{equation}
Then by \cite[Theorem~9.15, p. 241]{GilbargTrudinger}, \cite[Theorem~9.14, p.240]{GilbargTrudinger}, there exists a unique solution $\hat{\phi}\in W^{2,p}(D) \cap C(\Bar{D}),$ $p > d$, satisfying
\begin{equation}\label{ET3.2A}
\|\hat{\phi}\|_{W^{2,p}(D)} \leq \kappa_1 (\|\hat{\phi}\|_{\infty} + \|\Gamma_1(\phi, f)\|_{L^{p}(D)})\,,
\end{equation} for some positive constant $\kappa_1 = \kappa_1(p, D)$ which is independent of $\hat{\phi}$, $\phi$, $f$. From \cite[Theorem~9.1, p. 220]{GilbargTrudinger}, we deduce that
\begin{equation*}
\|\hat{\phi}\|_{\infty} \leq \kappa_2 \|\Gamma_1(\phi, f)\|_{L^{d}(D)},
\end{equation*} for some constant $\kappa_2 > 0$. Hence, from (\ref{ET3.2A}), we obtain
\begin{equation}\label{ET3.2B}
\|\hat{\phi}\|_{W^{2,p}(D)} \leq \kappa_3 \|\Gamma_1(\phi, f)\|_{L^{p}(D)}
\end{equation} for some positive constant $\kappa_3$\,. Now consider an operator $\mathfrak{T}$ mapping $\phi\in C_0^1(D)$ to the corresponding solution $\hat{\phi}$ of (\ref{ET3.2A1}), i.e., $\mathfrak{T}(\phi) = \hat{\phi}$\,. Since the embedding $W^{2,p}(D)\hookrightarrow C^{1,\alpha}(D)$ for $p > d$ and $\alpha\in (0, 1-\frac{d}{p})$ is compact, the operator $\mathfrak{T}$ is compact and continuous. Now we want to show that the following space of functions
\begin{equation*}
\{\phi\in C_0^1(D) : \phi = \nu\mathfrak{T}(\phi)\quad \text{for some}\quad \nu\in [0,1]\},
\end{equation*}is bounded in $C_0^1(D)$. Suppose that there exists a sequence $(\phi_n, \nu_n)$ with $\|\phi_n\|_{C_0^1(D)}\to\infty$ and $\nu_n\to\nu\in[0,1]$ as $n\to \infty$\,. Scaling $\phi_n$ appropriately we assume that $\|\phi_n\|_{C_0^1(D)} = 1$\,. Hence, in view of the estimate (\ref{ET3.2B}), extracting a suitable subsequence, there exists a nontrivial $\tilde{\phi}$ satisfying
\begin{equation*}
a_{ij} (x) \frac{\partial^2 \tilde{\phi}(x)}{\partial x_i \partial x_j} = - \nu \inf_{v_1\in V_1}\{b_i (x, v_1, v_2(x)) \frac{\partial \tilde{\phi}(x)}{\partial x_i} + r_1 (x, v_1, v_2(x)) \tilde{\phi}(x)\}\,,
\end{equation*} with $\tilde{\phi} = 0$ on $\partial{D}$. This is a contradiction to the strong maximum principle \cite[Theorem~9.6, p. 225]{GilbargTrudinger}. This implies that the above space is bounded. Hence, by the Leray-Schauder fixed point theorem \cite[Theorem~11.3, p. 280]{GilbargTrudinger}, it follows that $\mathfrak{T}$ admits a fixed point $\varphi \in W^{2,p} (D) \cap C (\Bar{D})\,,$ i.e., we have
$${\mathcal G}^{v_2}_1  \varphi(x) = f(x),\quad\text{with}\quad \varphi = 0 \quad\  {\rm on}\quad\  \partial D.$$
Also, by the strong maximum principle \cite[Theorem~9.6]{GilbargTrudinger} it is clear that $\varphi$ satisfying the equation is unique.

Let $\mathfrak{X} = C_{0}(D)$ and $\mathfrak{C}$  the cone of non-negative functions in $\mathfrak{X}$. Now define an operator $\hat{\mathfrak{T}}$ which maps $f\in\mathfrak{X}$ to corresponding solution $\varphi\in W^{2,p} (D) \cap C (\Bar{D})$ satisfying
$${\mathcal G}^{v_2}_1  \varphi(x) = - f(x),\quad\text{with}\quad \varphi = 0 \quad\  {\rm on}\quad\  \partial D.$$ From the above discussion it is easy to see that the operator $\hat{\mathfrak{T}}$ is well defined. Thus, combining \cite[Theorem~9.1]{GilbargTrudinger} and \cite[Theorem~9.14]{GilbargTrudinger}, we deduce that
\begin{equation}\label{ET3.2C}
\|\varphi\|_{W^{2,p}(D)} \leq \kappa_1 \sup_{D}|\varphi|\,,
\end{equation}
for some positive constant $\kappa_1$\,. From (\ref{ET3.2C}), it is clear that $\hat{\mathfrak{T}}$ is compact and continuous. Also, from the definition one can see that $\hat{\mathfrak{T}}$ is $1$-homogeneous (i.e., $\hat{\mathfrak{T}}(\tilde{\lambda} f) = \tilde{\lambda} \hat{\mathfrak{T}}(f)$ for all $\tilde{\lambda} \geq 0$). Suppose $\hat{\mathfrak{T}}(f_k) = \varphi_k$, $k=1,2$\,, with $f_1 \leq f_2$. Thus, we have ${\mathcal G}^{v_2}_1  \varphi_1(x) \geq {\mathcal G}^{v_2}_1  \varphi_2(x)$\,. Since ${\mathcal G}^{v_2}_1$ is concave, it follows that ${\mathcal G}^{v_2}_1  (\varphi_2 - \varphi_1)(x) \leq 0$\,. Hence, applying \cite[Theorem~3.1]{QuaasSirakov} we obtain $\varphi_2 \geq\varphi_1$ and if $f_1 < f_2$ (i.e., $f_1 \leq f_2$ and $f_1 \neq f_2$) then we have $\varphi_2 > \varphi_1$ (see \cite[Lemma~3.1]{QuaasSirakov}). This implies that $\hat{\mathfrak{T}}$ is order preserving. Let $\Tilde{\phi}\in\mathfrak{C}$ be nontrivial nonnegative function with compact support, hence from the above discussion we deduce that $\hat{\mathfrak{T}}(\Tilde{\phi}) > 0$. Thus, one can choose $\kappa_2 > 0$ such that $\kappa_2\hat{\mathfrak{T}}(\Tilde{\phi}) - \Tilde{\phi} > 0$ in $D$\,. Therefore, by Krein-Rutman theorem (see Theorem~\ref{TAA.1}), we conclude that there exists $(\hat{\lambda},  \psi_D) \in \mathbb{R}_{+} \times W^{2,p} (D) \cap C(\Bar{D})$ with $\psi_D >0$ satisfying
\begin{equation}\label{ET3.2D}
{\mathcal G}^{v_2}_1 \psi_D = \hat{\lambda}\psi_D \quad\text{in}\quad D, \quad\text{and}\quad \psi_D = 0\quad\text{on}\quad\partial{D}\,.
\end{equation}Where $\psi_D$ is unique upto scalar multiplication. Now,  $r_1 \geq 0$ (which is the case by our assumption), since $r_1$ is bounded in $\bar{D}$ replacing $r_1$ by $(r_1-\|r_1\|_{\infty, D})$\,, following the above arguments there exists $(\lambda_D,  \psi_D) \in \mathbb{R} \times W^{2,p} (D) \cap C(\Bar{D})$ with $\psi_D >0$ satisfying (\ref{ET3.2D}).

Next, we show that
\[
\lambda_D \  =  \  \lambda^{+}_1 (v_2, D).
\]
Clearly,
\begin{equation}\label{onesideinequalitythm3.2}
\lambda_D \geq \lambda^{+}_1 (v_2, D).
\end{equation}
Suppose $\lambda^{+}_1 (v_2, D)  < \lambda_D$. Then for each $\varepsilon > 0$, there exists $\varepsilon' \leq \varepsilon $ and $\varphi' \in W^{2, p}(D) \cap C(\Bar{D}), \varphi' > 0$
such that
\begin{equation}\label{eq3thm3.2}
{\mathcal G}^{v_2}_1 \varphi' \leq (\lambda^{+}_1 (v_2, D) + \varepsilon' ) \varphi'.
\end{equation} Choose $\epsilon > 0$ small enough such that $\lambda^{+}_1 (v_2, D) + \varepsilon' < \lambda_D$. Also, we have
\begin{equation}\label{eq4thm3.2}
{\mathcal G}^{v_2}_1 \psi_D - (\lambda^{+}_1 (v_2, D) + \varepsilon' ) \psi_D
\  > \  {\mathcal G}^{v_2}_1 \psi_D - \lambda_D \psi_D  = 0.
\end{equation}
Hence by Theorem \ref{auxillaryresult}, it follows that $\psi_D = t \varphi'$ for some $t > 0$. This gives a contradiction. Therefore we get $\lambda_D = \lambda^{+}_1 (v_2, D)$. This completes the proof.
\end{proof}

\section{Eigenvalue problem for controlled diffusion operators in $\mathbb{R}^d$}
In this section we explore the existence of generalised eigenvalue of the controlled
diffusion operator ${\mathcal G}^{v_j}_i, v_j \in {\mathcal A}_j$ in the whole space $\mathbb{R}^d$ and establish their relations with the risk-sensitive ergodic optimal control problem\,. The generalized principal eigenvalue of ${\mathcal G}^{v_j}_i$ in the whole space is defined by
\begin{equation}\label{principaleigenvalue}
\lambda^+_i(v_j) \ =  \  \inf \{ \lambda \in \mathbb{R} \mid \ {\rm for\ some} \  \varphi \in
 W^{2, d}_{loc} (\mathbb{R}^d) \cap C(\mathbb{R}^d), \varphi > 0,
 {\mathcal G}^{v_j}_i \varphi \leq \lambda \varphi  \ {\rm a.e.} \} .
 \end{equation}

In order to study our game problem we enforce following Foster-Lyapunov condition on the dynamics.	

\begin{assumption}\label{A2}
\begin{itemize}
\item[(i)]\textbf{In bounded cost case:} There exist  ${\mathcal V} \in C^2(\mathbb{R}^d)$ with
$\inf_{\mathbb{R}^d} {\mathcal V}  \geq 1$, constants $\delta , \tilde{\alpha}  > 0$
and a compact set $\tilde{K}$
such that
\begin{equation}\label{EA21}
\sup_{u_i \in \mathbb{U}_i, i =1,2} {\mathcal L}^{u_1, u_2} {\mathcal V} \leq \tilde{\alpha} I_{\tilde{K}} -  \delta   {\mathcal V}.
\end{equation}
and $ \max_{i=1,2}\|r_i \|_\infty < \delta.$

Or,

\item[(ii)]\textbf{In unbounded cost case:} There exist  ${\mathcal V} \in C^2(\mathbb{R}^d)$ with
$\inf_{\mathbb{R}^d} {\mathcal V}  \geq 1$, an inf-compact positive  $\ell \in C(\mathbb{R}^d)$ (i.e., the sublevel sets $\{\ell \leq \kappa \}$ are compact, or empty, in $\mathbb{R}^d$ for each $\kappa\in \mathbb{R}$), a constant $\tilde{\alpha } > 0$ and a compact set $\tilde{K}$ such that
\begin{equation}\label{EA22}
\sup_{u_i \in \mathbb{U}_i, i =1,2} {\mathcal L}^{u_1, u_2} {\mathcal V} \leq \tilde{\alpha}
I_{\tilde{K}} - \ell {\mathcal V},
\end{equation} and for $i = 1,2$
\begin{equation}\label{EA23}
\ell(x) - \sup_{u_i \in \mathbb{U}_i, i =1,2}r_{i}(x, u_1, u_2)\quad \text{is inf-compact}\,.
\end{equation}
\end{itemize}
\end{assumption}
As noted in \cite{arapostathis_biswas_saha}, \cite{AAABSP21}, if $a$ and $b$ are bounded, it might not be possible to find an unbounded function $\ell$ which satisfies (\ref{EA22}). In view of this, we are assuming (\ref{EA21}).

For $i \neq j$, it is easy to see that under Assumption~\ref{A2}(i)
\[
\sup_{v_1 \in {\mathcal A}_1} \sup_{v_2 \in {\mathcal A}_2}
\limsup_{T \to \infty} \frac{1}{T} \log \mathbb{E}^{v_1, v_2}_x
\Big[ e^{\int^T_0 r_i(X(t), v_1(t), v_2(t)) dt } \Big] \leq \|r_i \|_\infty < \infty\,.
\] Also, under Assumption~\ref{A2}(ii), applying It\^{o}-Krylov formula, it follows that
\[
\sup_{v_1 \in {\mathcal A}_1} \sup_{v_2 \in {\mathcal A}_2}
\limsup_{T \to \infty} \frac{1}{T} \log \mathbb{E}^{v_1, v_2}_x
\Big[ e^{\int^T_0 \ell(X(t)) dt } \Big] \leq \frac{\tilde{\alpha}}{\min_{\tilde{K}}\mathcal{V}}\,.
\]
From (\ref{EA23}), it is clear that $\displaystyle{\sup_{u_k \in \mathbb{U}_k, k=1,2} r_i (\cdot, u_1, u_2) \leq \kappa_1 + \ell(\cdot)}$, for some positive constant $\kappa_1$\,.  Therefore, we obtain
\begin{equation}\label{EEst1}
\sup_{v_1 \in {\mathcal A}_1} \sup_{v_2 \in {\mathcal A}_2}
\limsup_{T \to \infty} \frac{1}{T} \log \mathbb{E}^{v_1, v_2}_x
\Big[ e^{\int^T_0 r_i(X(t), v_1(t), v_2(t)) dt } \Big] \leq \kappa_1 + \frac{\tilde{\alpha}}{\min_{\tilde{K}}\mathcal{V}}\,.
\end{equation}
Now we proceed  to prove the existence of the principal eigenpair to certain semi-linear elliptic pdes in the whole space $\RR^d$\,.
\begin{theorem}\label{existenceeigenvalue} Let Assumptions \ref{A1} and \ref{A2} hold. Suppose $v_j\in \mathcal{S}_j$, then there exists a unique
$\psi \in   W^{2, p}_{loc} (\mathbb{R}^d) \cap C(\mathbb{R}^d),
p \geq 2, \psi > 0$ such that
\begin{equation}\label{ET4.1AA}
{\mathcal G}^{v_j}_i \psi \  =  \  \lambda^+_i(v_j) \psi  \quad\text{with}\,\,\,\psi(0) = 1.
\end{equation}
Moreover  $\lambda^+_i(v_j)$ is simple and satisfies
\begin{equation}\label{EOpt1A}
\lambda^+_i (v_j) \  \leq \  \lambda_i(v_j)\,, \quad\text{for}\quad i \neq j,\, i,j = 1,2\,.
\end{equation}
\end{theorem}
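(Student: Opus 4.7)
My plan is to construct the principal eigenpair as the monotone limit of the Dirichlet eigenpairs from Theorem~\ref{Dirichleteigenvalue} on expanding balls $B_n$, controlling the approximation through the Foster--Lyapunov pair $(\mathcal{V},\ell)$ from Assumption~\ref{A2}. Fix $i\ne j$ and $v_j\in\mathcal{S}_j$. Let $(\lambda_n,\psi_n):=(\lambda_i^+(v_j,B_n),\psi_{B_n})$ be the Dirichlet eigenpair on $B_n$, normalized so that $\psi_n(0)=1$. The sequence $\lambda_n$ is non-decreasing by the standard monotonicity of the Dirichlet eigenvalue in the domain, so I must produce a uniform upper bound. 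To this end I exhibit $\mathcal{V}$ as a global supersolution of $\mathcal{G}_i^{v_j}$: combining Assumption~\ref{A2} with the fact that $r_i$ is either bounded (case (i)) or dominated by $\kappa_1+\ell$ by \eqref{EA23} (case (ii)), one gets $\mathcal{G}_i^{v_j}\mathcal{V}\le \Lambda\mathcal{V}$ on $\RR^d$ for an explicit constant $\Lambda<\infty$, and the variational characterization of $\lambda_i^+(v_j,B_n)$ forces $\lambda_n\le\Lambda$; thus $\lambda_n\uparrow\lambda_\infty<\infty$.

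The second step is a local compactness argument. Choose a measurable minimizing selector $v_i^n$ so that $\mathcal{G}_i^{v_j}\psi_n=\mathcal{L}^{v_i^n,v_j}\psi_n+r_i(\cdot,v_i^n,v_j)\psi_n$ on $B_n$. For any $R>0$ and all $n>R+1$, the coefficients of this linear PDE are bounded on $B_R$, so the Harnack inequality together with $\psi_n(0)=1$ yields $c_R^{-1}\le \psi_n\le c_R$ on $B_R$, and the interior $W^{2,p}$ estimates of \cite{GilbargTrudinger} give $\|\psi_n\|_{W^{2,p}(B_R)}\le C_R$. A diagonal extraction produces $\psi_n\to\psi$ in $C^{1,\alpha}_{loc}(\RR^d)$ and weakly in $W^{2,p}_{loc}(\RR^d)$. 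Stability of $\mathcal{G}_i^{v_j}$ under this convergence---because the infimum is taken over the compact set $V_1$ with jointly continuous integrands---yields $\mathcal{G}_i^{v_j}\psi=\lambda_\infty\psi$ a.e., with $\psi(0)=1$ and $\psi>0$. Identifying $\lambda_\infty=\lambda_i^+(v_j)$ is then routine: $\psi$ is admissible in \eqref{principaleigenvalue}, giving $\lambda_i^+(v_j)\le\lambda_\infty$, while any competitor $(\lambda,\varphi)$ in \eqref{principaleigenvalue} restricts to each $B_n$ to force $\lambda\ge\lambda_n$, hence $\lambda\ge\lambda_\infty$.

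Simplicity of $\lambda_i^+(v_j)$ (and hence uniqueness of $\psi$ given the normalization) comes from the standard ratio argument for concave operators. Given another positive solution $\tilde\psi$, set $t^\star=\inf\{t>0 : t\tilde\psi\ge\psi \text{ on } \RR^d\}$. Concavity of $\mathcal{G}_i^{v_j}$ shows that $w=t^\star\tilde\psi-\psi\ge 0$ is a supersolution of the linear operator $\mathcal{L}^{v_i^\star,v_j}+r_i(\cdot,v_i^\star,v_j)-\lambda_i^+(v_j)$, where $v_i^\star$ is the minimizing selector for $\psi$. The strong maximum principle together with Theorem~\ref{auxillaryresult} then forces $w\equiv 0$, so $\tilde\psi=\psi/t^\star$.

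For the comparison \eqref{EOpt1A}, I use the stochastic representation of $\psi$. For any $v'_i\in\mathcal{A}_i$, the inequality $\mathcal{L}^{v'_i(t),v_j(X(t))}\psi+r_i(X(t),v'_i(t),v_j(X(t)))\psi\ge \lambda_i^+(v_j)\psi$ and the It\^o--Krylov formula make
\[
M_t=\exp\!\Big(-\lambda_i^+(v_j)t+\int_0^t r_i(X(s),v'_i(s),v_j(X(s)))\,ds\Big)\psi(X(t))
\]
a local submartingale under $\mathbb{E}^{v'_i,v_j}_x$. Localizing along exit times of $B_n$---with integrability of $\mathcal{V}(X(t))$ supplied by Assumption~\ref{A2} and a growth bound $\psi\le C\mathcal{V}$ controlling the terminal value---I pass to the limit to obtain
\[
\psi(x)\le \mathbb{E}^{v'_i,v_j}_x\!\Big[\exp\!\Big(-\lambda_i^+(v_j)T+\int_0^T r_i\,ds\Big)\psi(X(T))\Big].
\]
Taking logarithms, dividing by $T$, and letting $T\to\infty$ (absorbing the $\psi(X(T))$ factor via \eqref{EEst1} and $\psi\le C\mathcal{V}$) gives $\lambda_i^+(v_j)\le\rho_i(x,v'_i,v_j)$; taking the infimum over $v'_i\in\mathcal{A}_i$ and over $x\in\RR^d$ yields \eqref{EOpt1A}. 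I expect the main obstacle to be establishing the Lyapunov-type majorization $\psi\le C\mathcal{V}$ in the unbounded cost setting, since this single estimate drives both the localization of $M_t$ and the absorption of the terminal value; it must be derived from the coupling of \eqref{EA22}--\eqref{EA23} with the PDE for $\psi$ via a comparison argument of Quaas--Sirakov type already invoked in Theorem~\ref{Dirichleteigenvalue}.
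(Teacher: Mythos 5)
Your overall strategy — approximate by Dirichlet eigenpairs on $B_n$, get local compactness from Harnack and interior $W^{2,p}$ estimates, pass to the limit, and identify the limit with $\lambda_i^+(v_j)$ by the monotonicity of the Dirichlet eigenvalues — matches the paper's proof, and the simplicity argument via the ratio $t^\star\tilde\psi-\psi$ and the strong maximum principle is essentially the paper's too (though invoking Theorem~\ref{auxillaryresult} there is slightly off, since that result is formulated for Dirichlet data on bounded domains; the paper just applies the strong maximum principle \cite[Theorem 9.6]{GilbargTrudinger} directly on $B_R$ and lets $R\to\infty$).

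Two places diverge. First, for the uniform bound $\lambda_n\le\Lambda$, you use $\mathcal{V}$ as a global supersolution of $\mathcal{G}_i^{v_j}$ and invoke the definition of $\lambda_i^+(v_j,B_n)$. This is correct (under A2(i), $\mathcal{G}_1^{v_2}\mathcal{V}\le(\tilde\alpha+\norm{r_1}_\infty)\mathcal{V}$; under A2(ii), $\mathcal{G}_1^{v_2}\mathcal{V}\le(\tilde\alpha+\kappa_1)\mathcal{V}$), and is a perfectly good alternative. The paper instead bounds $\lambda_n$ stochastically, using the fact that $\psi_n$ vanishes on $\partial B_n$: by It\^o--Dynkin, $\psi_n(x)\le\norm{\psi_n}_{\infty,B_n}\mathbb{E}_x^{v_1,v_2}[e^{\int_0^T(r_1-\lambda_n)\,dt}]$ for any $v_1\in\mathcal{A}_1$, whence $\lambda_n\le\rho_1(x,v_1,v_2)$. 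This second route is the one the paper prefers because it delivers \eqref{EOpt1A} for free in the limit $n\to\infty$.

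Second, and this is where the genuine gap sits: your proof of \eqref{EOpt1A} works directly with the whole-space eigenfunction $\psi$ and a fixed-time localization $T\to\infty$, which forces you to control $\psi(X(T))$ and therefore to establish the Lyapunov majorization $\psi\le C\mathcal{V}^\theta$ \emph{before} Theorem~\ref{existenceeigenvalue} is proved. You flag this yourself as the main obstacle, but as written the proposal leaves it unresolved, and the estimate is not trivial: in the paper it appears only in Theorem~\ref{ThmCont} (equation \eqref{ET4.3A}), and its derivation there relies on the stochastic representation from Theorem~\ref{stochasticrepeigenfunction}, which comes \emph{after} the present theorem. The paper sidesteps the circularity entirely by proving $\lambda_n\le\rho_1(x,v_1,v_2)$ at the level of the Dirichlet eigenfunctions $\psi_n$, which are bounded on $B_n$ and vanish on $\partial B_n$, so no growth estimate on $\psi$ is needed and the $\norm{\psi_n}_{\infty,B_n}$ prefactor washes out under $\tfrac{1}{T}\log(\cdot)$ as $T\to\infty$. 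Your plan can be repaired either by adopting that Dirichlet-level argument or by proving the $\mathcal{V}$-majorization first as an independent lemma (e.g., by an exit-time comparison coupled with \eqref{EA22}--\eqref{EA23} and Jensen's inequality, as in \eqref{ET4.3A}), but in its current form the step ``pass to the limit to obtain $\psi(x)\le\mathbb{E}[\cdots\psi(X(T))]$ and absorb the terminal value'' is not justified.
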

\begin{proof} Take $i=1, j=2$. Let $D = B_n, n \geq 1 $, denote the open ball centred at the origin with radius $n$.  From Theorem~\ref{Dirichleteigenvalue}, there exists a (unique) $\psi_n \in W^{2,p} (B_n) \cap C(\Bar{B}_n), \psi_n > 0$ in $B_n$ with $\psi_n(0) = 1$ satisfying
\begin{eqnarray}\label{EPrincipleEigen1A}
{\mathcal G}^{v_2}_1 \psi_n & =  & \lambda_n \psi_n \nonumber\\
\psi_n & = & 0 \ {\rm on} \ \partial B_n,
\end{eqnarray}
where $\lambda_n = \lambda_1^+(v_2, B_n)$.  Choose $v_1 \in {\mathcal A}_1$, since $\psi_n = 0$ on $\partial{B}_n$ applying Ito-Dynkin's formula we obtain 
\begin{align*}
\psi_n(x) &\leq \mathbb{E}^{v_1, v_2}_x\left[e^{\int_{0}^{T}(r_1(X(t), v_1(t), v_2(X(t)))-\lambda_n) dt} \psi_n(X(T)) \Ind_{\{T \leq \tau\}}\right]\\
&\leq \|\psi_n\|_{\infty , B_n}\mathbb{E}^{v_1, v_2}_x\left[e^{\int_{0}^{T}(r_1(X(t), v_1(t), v_2(X(t)))-\lambda_n) dt}\right]\quad\text{for all}\quad (T,x)\in \RR_+\times B_n\,,
\end{align*} where $\tau$ is the first exit time of the process $X(t)$ from $B_n$. Thus, taking logarithm on both sides of the inequality, dividing by $T$ and letting $T\to\infty$, it follows that
\begin{equation}\label{ET4.1A}
\lambda_n \ \leq \ \limsup_{T \to \infty} \frac{1}{T} \log \mathbb{E}^{v_1, v_2}_x \Big[ e^{\int^T_0 r_1(X(t), v_1(t), v_2(X(t))) dt } \Big]  < \infty .
\end{equation}
Since $\lambda_n $ is nondecreasing in $n$ (see, (\ref{dirichletprincipaleigen})), it follows that $\lim_n \lambda_n = \lambda$
exists.

Now using Harnack inequality (see \cite[ Corollary 8.21, p.199]{GilbargTrudinger}) and the interior estimates \cite[Theorem 9.11, p.235]{GilbargTrudinger}, we get for each bounded domain $D$, there exists $n_0$ such that
\begin{equation}\label{ET4.1B}
\sup_{n \geq n_0} \|\psi_n\|_{2, p, D} < \infty.
\end{equation}
Hence, by a standard diagonalization procedure and Banach-Alaoglu theorem, we can extract a subsequence $\{\psi_{n_k}\}$ such that for some $\psi \in W^{2, p}_{loc} (\mathbb{R}^d) \cap C(\mathbb{R}^d), p \geq 2$
\begin{equation}\label{ETC1.3BC}
\begin{cases}
\psi_{n_k}\to & \psi\quad \text{in}\quad W^{2, p}_{loc} (\mathbb{R}^d)\quad\text{(weakly)}\\
\psi_{n_k}\to & \psi\quad \text{in}\quad C^{1, \alpha}(K) \quad\text{(strongly)}\,\,\, \text{for all compact set}\,\, K \subset \Rd \,, 
\end{cases}       
\end{equation} where $0 < \alpha < 1 - \frac{d}{p}$\,. Now multiplying  both sides of (\ref{EPrincipleEigen1A}) by a test function $\varphi\in C_{c}^{\infty}(\RR^d)$,  integrating, and then letting $n\to \infty$, we deduce that $\psi \in W^{2, p}_{loc} (\mathbb{R}^d) \cap C(\mathbb{R}^d), p \geq 2$ satisfies
\begin{equation}\label{ET4.1C}
{\mathcal G}^{v_2}_1 \psi \  =  \  \lambda \psi \  {\rm in} \  \mathbb{R}^d.
\end{equation}
From (\ref{ET4.1A}), it follows that
\[
\lambda \leq \lambda_1 (v_2).
\]
Since for each $n\in\mathbb{N}$ we have $\psi_n > 0$ it clear that $\psi \geq 0$ in $\mathbb{R}^d$ and since $\psi_n(0) = 1$ for all $n$, we have $\psi(0)=1$. Thus, applying Harnack's inequality we deduce that $\psi > 0$ in $\mathbb{R}^{d}$.

Next from the definition of the  generalized principal eigenvalue, it is immediate that
\begin{equation}\label{ET4.1D}
\lambda \geq \lambda^+_{1}(v_2).
\end{equation}
Also from the definition of the generalized principle eigenvalue (see eq. (\ref{dirichletprincipaleigen})), it follows that
\begin{equation}\label{eq5thm4.1}
\lambda_n = \lambda^+_1(v_2, B_n) \leq \lambda^+_1(v_2).
\end{equation}
Thus, combining (\ref{ET4.1D}) and (\ref{eq5thm4.1}) we get
\[
\lambda = \lambda^+_1(v_2).
\]
Next we show that any eigenvalue of ${\mathcal G}^{v_2}_1$ corresponding to a positive eigenfunction in the class $W^{2, p}_{loc}(\mathbb{R}^d) \cap C(\mathbb{R}^d)$ is simple. This, in particular,  would  impliy the simplicity of the generalized principal eigenvalue $\lambda^+_1(v_2)$.

Let $\psi_k \in W^{2, p}_{loc}(\mathbb{R}^d) \cap C(\mathbb{R}^d), k=1,2$ be positive eigenfunctions corresponding to an eigenvalue $\lambda$ (in particular, we are interested in $\lambda = \lambda^+_1(v_2)$) satisfying $\psi_k (0) =1$. Let $t_0 > 0 $ be such that $\psi_1- t_0 \psi_2 \geq 0 $ in $\Bar B_R$.

Let $v_1$ be a minimizing selector of ${\mathcal G}^{v_2}_1 \psi_1 $.
Thus
\begin{eqnarray*}
\mathcal{L}_1^{v_1, v_2} \psi_1 + r_1(x, v_1(x), v_2(x)) \psi_1 &  = & {\mathcal G}^{v_2}_1\psi_1  = \lambda \psi_1 \\
 \mathcal{L}_1^{v_1, v_2} \psi_2  +  r_1(x, v_1(x), v_2(x)) \psi_2 &  \geq  & {\mathcal G}^{v_2}_1\psi_2 = \lambda \psi_2\,.
\end{eqnarray*}
This gives us the following inequality
\[
\mathcal{L}_1^{v_1, v_2} ( \psi_1 -t_0 \psi_2) + r_1(x, v_1(x), v_2(x)) (\psi_1 - t_0 \psi_2)  \  \leq \
\lambda (\psi_1 - t_0 \psi_2).
\]
Since $\psi_1 - t_0 \psi_2 \geq 0$ in $\Bar B_R$, it follows that
\[
\mathcal{L}_1^{v_1, v_2} ( \psi_1 -t_0 \psi_2) - (r_1(x, v_1(x), v_2(x)) -\lambda)^{-} (\psi_1 - t_0 \psi_2)
\leq 0 \  {\rm in} \  B_R.
\]
Hence using the  maximum principle \cite[Theorem~9.6]{GilbargTrudinger}, we have $\psi_1 - t_0 \psi_2 =0 $ in $B_R$ and since $\psi_1 (0) = \psi_2(0) =1$, we get $t_0 =1$ and hence $\psi_1 = \psi_2$ in $B_R$. Since the choice of $R>0$ arbitrary (by choosing large $R>0$), it follows that $\psi_1 = \psi_2$ in $\mathbb{R}^d$. This completes the proof.
\end{proof}
We denote the eigenfunction corresponding to $\lambda^+_i(v_j)$ satisfying $\psi (0) =1$ by $\psi_i(v_j)$. Next theorem proves that the eigenfunction $\psi_i(v_j)$ corresponding to the principal eigenvalue $\lambda^+_i(v_j)$ admits certain stochastic representation. This result  plays  crucial role in obtaining complete characterization of Nash equilibrium in the space of stationary Markov strategies.
\begin{theorem}\label{stochasticrepeigenfunction}  Let Assumptions \ref{A1}, \ref{A2} hold. Then, for $v_j\in \mathcal{S}_j$, the eigenfunction $\psi_i(v_j)$ corresponding to principal eigenvalue $\lambda^+_i(v_j)$ satisfies
\begin{equation}\label{ET4.2AA}
\psi_i(v_j) (x) \  =  \  \mathbb{E}^{v_1, v_2}_x \Big[ e^{\int^{\breve{\tau}_r}_0 (r_i(X(t), v_1(X(t)), v_2(X(t))) - \lambda^+_i(v_j) ) dt } \psi_i(v_j) (X(\breve{\tau}_r) \Big], r > 0\,,
\end{equation}
where $\breve{\tau}_r $ is the hitting time of $X(t)$ to $B_r$ and $v_i\in {\mathcal S}_i$ is a minimizing selector of ${\mathcal G}^{v_j}_i \psi_i(v_j)$,\, $i,j = 1,2$.
\end{theorem}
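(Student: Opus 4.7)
I focus on the case $(i,j) = (1,2)$; the case $(i,j)=(2,1)$ is analogous. Let $v_1 \in \mathcal{S}_1$ be a measurable minimizing selector realizing the infimum in $\mathcal{G}_1^{v_2}\psi_1(v_2)$, so that the eigen-equation linearizes to
\begin{equation*}
\mathcal{L}^{v_1, v_2}\psi_1(v_2) + r_1(\cdot, v_1(\cdot), v_2(\cdot))\psi_1(v_2) = \lambda_1^+(v_2)\psi_1(v_2) \quad\text{in}\ \mathbb{R}^d.
\end{equation*}
My plan is a standard It\^o-Krylov plus localization argument: applying It\^o-Krylov's formula under $\mathbb{P}_x^{v_1, v_2}$ to
\begin{equation*}
Y(t) := \exp\Bigl(\int_0^t \bigl[r_1(X(s), v_1(X(s)), v_2(X(s))) - \lambda_1^+(v_2)\bigr]\,ds\Bigr)\psi_1(v_2)(X(t)),
\end{equation*}
the deterministic drift vanishes by the linearized identity, so $Y$ is a nonnegative local martingale.

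For $x \in B_n \setminus B_r$ with $n > r$, write $\tau_n$ for the first exit of $X(\cdot)$ from $B_n$. The stopped process $Y(\cdot \wedge \breve{\tau}_r \wedge \tau_n \wedge T)$ is bounded (since $\psi_1(v_2)$ is continuous on $\bar{B}_n$ and the exponential weight is bounded on $[0, T]$), hence a true martingale, and optional sampling yields
\begin{equation*}
\psi_1(v_2)(x) = \mathbb{E}_x^{v_1, v_2}\bigl[Y(\breve{\tau}_r)\Ind_{\{\breve{\tau}_r \leq \tau_n \wedge T\}}\bigr] + \mathbb{E}_x^{v_1, v_2}\bigl[Y(\tau_n \wedge T)\Ind_{\{\tau_n \wedge T < \breve{\tau}_r\}}\bigr].
\end{equation*}
Under Assumption~\ref{A2}, non-explosion gives $\tau_n \uparrow \infty$ a.s.\ and the Foster-Lyapunov drift inequality gives $\breve{\tau}_r < \infty$ a.s., so monotone convergence makes the first summand converge to $\mathbb{E}_x^{v_1, v_2}[Y(\breve{\tau}_r)]$ as $n, T \to \infty$, producing one inequality in (\ref{ET4.2AA}).

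The main obstacle is showing the tail term $\mathbb{E}_x^{v_1, v_2}\bigl[Y(\tau_n \wedge T)\Ind_{\{\tau_n \wedge T < \breve{\tau}_r\}}\bigr]$ vanishes, and this is where the Foster-Lyapunov hypothesis is decisive, through two ingredients. First, I would derive a pointwise Lyapunov comparison $\psi_1(v_2) \leq C_0 \mathcal{V}$ for some $C_0 > 0$, using the uniform bound (\ref{EEst1}) on $\lambda_1^+(v_2)$ together with the Lyapunov dominance of $\mathcal{V}$ over the cost via a Krein-Rutman-type comparison. Second, noting that $\lambda_1^+(v_2) \geq 0$ (by the strong maximum principle applied at a minimum of $\psi_1(v_2)$), under Assumption~\ref{A2}(i) the bound $\|r_1\|_\infty < \delta$ together with $\mathcal{L}^{v_1, v_2}\mathcal{V} \leq \tilde{\alpha}\Ind_{\tilde K} - \delta\mathcal{V}$ makes $M(t) := e^{\int_0^t (r_1 - \lambda_1^+(v_2))\,ds}\mathcal{V}(X(t))$ a uniformly integrable supermartingale on $[0, \breve{\tau}_r]$ (after enlarging $r$ so that $\tilde K \subset B_r$); under Assumption~\ref{A2}(ii), the inf-compact $\ell$ plays the same role via (\ref{EA23}). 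Combining the two ingredients, $Y(\tau_n \wedge T)\Ind_{\{\tau_n \wedge T < \breve{\tau}_r\}} \leq C_0 \, M(\tau_n \wedge T \wedge \breve{\tau}_r)\Ind_{\{\tau_n \wedge T < \breve{\tau}_r\}}$, and since $M(\cdot \wedge \breve{\tau}_r)$ is uniformly integrable while the indicator converges a.s.\ to zero as $\tau_n \wedge T$ eventually exceeds $\breve{\tau}_r$, the tail tends to zero in $L^1$. Passing to the limit in the displayed identity then yields (\ref{ET4.2AA}).
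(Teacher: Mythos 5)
Your plan takes a genuinely different route from the paper's, and while the broad outline (localize, pass to the limit, control the tail) is natural, there is a circularity problem in the way you propose to control the tail term, and a couple of the auxiliary claims are not independently justified.

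First, the bound $\psi_1(v_2)\le C_0\mathcal V$ that you want to derive "via a Krein--Rutman-type comparison." In the paper this kind of pointwise Lyapunov domination (actually in the sharper form $\psi_1(v_2)\le \hat\kappa_2\mathcal V^{\theta}$ with $\theta\in(0,1)$, in the proof of Theorem~\ref{ThmCont}, eq.\ (\ref{ET4.3A})) is obtained \emph{from} the stochastic representation (\ref{ET4.2AA}) you are trying to prove; there is no independent comparison principle in the paper that gives it beforehand, so as stated your argument is circular. Second, you invoke $\lambda_1^+(v_2)\ge 0$ "by the strong maximum principle applied at a minimum of $\psi_1(v_2)$." But $\psi_1(v_2)$ is a positive function on $\mathbb R^d$ and there is no reason it attains its infimum; in the paper this nonnegativity is again deduced \emph{after} Theorem~\ref{stochasticrepeigenfunction}, using precisely the representation (\ref{ET4.2AA}). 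Third, even granting the Lyapunov bound, "uniform integrability of the supermartingale $M(\cdot\wedge\breve\tau_r)$" is asserted but not established: a nonnegative supermartingale is $L^1$-bounded, not automatically UI, and the estimate (\ref{ET4.3AA}) only controls $\mathbb E[M(\breve\tau_r)]$, not the whole family of stopped values. So the step where you conclude that the tail $\mathbb E_x^{v_1,v_2}[Y(\tau_n\wedge T)\Ind_{\{\tau_n\wedge T<\breve\tau_r\}}]\to 0$ is not justified by what you've written.

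The paper sidesteps all of these issues by not trying to kill the tail directly. Instead it introduces the Dirichlet eigenpairs $(\hat\lambda_n,\hat\psi_n)$ of the \emph{linear} operator $\mathcal L_1^{v_1,v_2}+r_1$ on $B_n$, for which the stochastic representation with the indicator $\Ind_{\{\breve\tau_r<\tau_n\}}$ is already known from \cite{arapostathis_biswas}; it then passes to the limit by splitting the expectation into a main term plus an error term $\sup_{\partial B_r}|\hat\psi_n-\psi|\cdot(\ldots)$ and kills the error with the Harnack inequality ($\inf_{\partial B_r}\hat\psi_n>0$). This gives the upper inequality (\ref{eq2.1thm4.2}) for the linear eigenfunction $\psi$. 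The lower inequality (\ref{eq2thm4.2}) for $\psi_1(v_2)$ follows from It\^o--Krylov and Fatou, and a strong-maximum-principle comparison then identifies $\psi_1(v_2)=\psi$ and $\lambda_1^+(v_2)=\lambda_1^+(v_1,v_2)$, completing the representation. To repair your proposal you would essentially need to reproduce that comparison argument to show the tail vanishes, at which point you would be following the paper's route.
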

\begin{proof} Take $i=1, j=2$.
Let $(\hat \lambda_n, \hat \psi_n)$ denote the generalized principal eigenpair
of the Dirichlet eigenvalue problem of ${\mathcal L}_1^{v_1, v_2} + r_1(x, v_1(x), v_2(x))$ in $B_n$ with $\hat \psi_n (0) =1$.
Using the monotonicity of $\hat \lambda_n$ with respect to the running cost, the following representation holds (see, \cite[Lemma~2.10 (i)]{arapostathis_biswas})
\begin{equation}\label{stochasticrepdirichleteigenfn}
\hat \psi_n(x) \  =  \  \mathbb{E}^{v_1, v_2}_x \Big[ e^{\int^{\breve{\tau}_r}_0 (r_1(X(t), v_1(X(t)), v_2(X(t)))
- \hat \lambda_n) dt } \hat \psi_n (X(\breve{\tau}_r)) I \{ \breve{\tau}_r < \tau_n\} \Big],
\end{equation}
where $\tau_n = \tau(B_n),$ the exit time from $B_n$. Also as in the proof of
Theorem \ref{existenceeigenvalue}, it follows that $\hat \lambda_n \uparrow \lambda_1^{+}(v_1, v_2)$, the generalized principal eigenvalue of ${\mathcal L}_1^{v_1, v_2} + r_1(x, v_1(x), v_2(x))$. Again using Harnack's inequality
and the standard approximation argument (as in Theorem \ref{existenceeigenvalue}), it follows that there exists
$\psi \in W^{2, p}_{loc}(\mathbb{R}^d \cap C(\mathbb{R}^d), \psi > 0$ satisfying
\begin{equation}\label{eq1.1thm4.2}
{\mathcal L}_1^{v_1, v_2} \psi  + r_1(x, v_1(x), v_2(x)) \psi \ = \ \lambda_1^+(v_1, v_2) \psi .
\end{equation}
Consider
\begin{align}\label{eq1.2thm4.2}
&\mathbb{E}^{v_1, v_2}_x \Big[ e^{\int^{\breve{\tau}_r}_0 (r_1(X(t), v_1(X(t)), v_2(X(t))) - \hat \lambda_n) dt } \hat \psi_n (X(\breve{\tau}_r)) I \{ \breve{\tau}_r < \tau_n\} \Big] \nonumber  \\
\  \leq &\ \mathbb{E}^{v_1, v_2}_x \Big[ e^{\int^{\breve{\tau}_r}_0 (r_1(X(t), v_1(X(t)), v_2(X(t))) - \hat \lambda_n) dt }  \psi (X(\breve{\tau}_r)) I \{ \breve{\tau}_r < \infty\} \Big]\\ \nonumber
& + \sup_{\partial B_r} |\hat{\psi}_n - \psi |
\mathbb{E}^{v_1, v_2}_x \Big[ e^{\int^{\breve{\tau}_r}_0 (r_1(X(t), v_1(X(t)), v_2(X(t))) - \hat \lambda_n) dt }  I \{ \breve{\tau}_r < \tau_n\} \Big] \,.
\end{align}
Using the  monotone convergence theorem, the first term in the r.h.s. of (\ref{eq1.2thm4.2}) converges  to
\begin{equation*}
\mathbb{E}^{v_1, v_2}_x \Big[ e^{\int^{\breve{\tau}_r}_0 (r_1(X(t), v_1(X(t)), v_2(X(t))) - \lambda_1^+(v_1, v_2)) dt }  \psi (X(\breve{\tau}_r)) \Ind_{\{ \breve{\tau}_r < \infty\}} \Big].
\end{equation*}
The second term
\begin{align*}
 &\sup_{\partial B_r} |\hat{\psi}_n - \psi |
\mathbb{E}^{v_1, v_2}_x \Big[ e^{\int^{\breve{\tau}_r}_0 (r_1(X(t), v_1(X(t)), v_2(X(t))) - \hat \lambda_n) dt }  I \{ \breve{\tau}_r < \tau_n\} \Big]  \\
&\leq \  \frac{ \sup_{\partial B_r} |\hat{\psi}_n - \psi | }{\inf_{\partial B_r} \hat{\psi}_n}
\mathbb{E}^{v_1, v_2}_x \Big[ e^{\int^{\breve{\tau}_r}_0 (r_1(X(t), v_1(X(t)), v_2(X(t)))
- \hat \lambda_n) dt } \hat{\psi}_n (X(\breve{\tau}_r)) I \{ \breve{\tau}_r < \tau_n\} \Big] \\
& =  \   \frac{ \sup_{\partial B_r} |\hat{\psi}_n - \psi | }{\inf_{\partial B_r} \hat{\psi}_n}  \hat{\psi}_n (x) \,\,
\to \  0  \  {\rm as } \  n \to \infty.
\end{align*}
In the above, we have used the fact that $\hat{\psi}_n - \psi \to 0$ uniformly over compact sets and $\inf_{\partial B_r} \hat{\psi}_n > 0$ (by Harnack's inequality)\,. Hence, we get
\begin{equation}\label{eq2.1thm4.2}
\psi (x)  \  \leq  \  \mathbb{E}^{v_1, v_2}_x \Big[ e^{\int^{\breve{\tau}_r}_0 r_1 (X(t), v_1(X(t)), v_2(X(t))) -
\lambda_1^+ (v_1, v_2)) dt } \psi (X(\breve{\tau}_r)) \Big] .
\end{equation}

Since
\begin{equation*}
{\mathcal G}^{v_2}_1 \hat \psi_n \leq {\mathcal L}_1^{v_1, v_2} \hat \psi_n + r_1(x, v_1(x), v_2(x))\hat \psi_n
  = \hat \lambda_n \hat \psi_n\,,
\end{equation*}  it follows that
\begin{equation*}
\lambda^+_1(v_2, B_n)  \leq \hat \lambda_n, n \geq 1.
\end{equation*}
Therefore
\begin{equation*}
\lambda^+_1(v_2) \leq \lambda_1^+(v_1, v_2).
\end{equation*}
Since $v_1\in\mathcal{S}_{1}$ is a minimizing selector of ${\mathcal G}^{v_2}_1 \psi_1(v_2)$, we have

\begin{equation}\label{eq1thm4.2}
{\mathcal L}_1^{v_1, v_2} \psi_1(v_2) + r_1(x, v_1(x), v_2(x)) \psi_1(v_2)
\  =  \ \lambda^+_1(v_2) \psi_1(v_2).
\end{equation}
Using Ito-Krylov formula, for fixed $T > 0$, $x \in B^c_r\cap B_n,  \  r > 0$ and $n$ large enough, we have
\begin{equation*}
\psi_1 (v_2)(x) =  \mathbb{E}^{v_1, v_2}_x \Big[ e^{\int^{\breve{\tau}_r\wedge T\wedge \tau_{n}}_0 r_1 (X(t), v_1(X(t)), v_2(X(t))) - \lambda^+_1 (v_2)) dt } \psi_1(v_2) (X(\breve{\tau}_r\wedge T\wedge \tau_{n})) \Big].
\end{equation*} Lettting $n\to\infty,$ and $T\to\infty$ and using Fatou's lemma, it follows that
\begin{eqnarray}\label{eq2thm4.2}
\psi_1 (v_2)(x) & \geq &
\mathbb{E}^{v_1, v_2}_x \Big[ e^{\int^{\breve{\tau}_r}_0 r_1 (X(t), v_1(X(t)), v_2(X(t))) -
\lambda^+_1 (v_2)) dt } \psi_1(v_2) (X(\breve{\tau}_r)) \Big] \\ \nonumber
\psi_1 (v_2)(x) &   \geq   &
\mathbb{E}^{v_1, v_2}_x \Big[ e^{\int^{\breve{\tau}_r}_0 r_1 (X(t), v_1(X(t)), v_2(X(t))) -
\lambda_1^+ (v_1, v_2)) dt } \psi_1(v_2) (X(\breve{\tau}_r) \Big] .
\end{eqnarray}
Hence  for each $ t > 0$,
\begin{equation}\label{eq3thm4.2}
\psi_1(v_2)(x) - t \psi (x) \  \geq \
 \mathbb{E}^{v_1, v_2}_x \Big[ e^{\int^{\breve{\tau}_r}_0 r_1 (X(t), v_1(t), v_2(t)) -
\lambda_1^+ (v_1, v_2)) dt } (\psi_1(v_2)(X(\breve{\tau}_r)) -  t \psi (X(\breve{\tau}_r)) \Big].
\end{equation}
Thus
\begin{equation*}
\psi_1(v_2)(x) -  t \psi (x)  \geq 0 \  {\rm in} \,\, \Bar{B}_r \quad\text{implies that}\quad \psi_1(v_2)(x) -  t \psi (x)  \geq 0
\ {\rm in} \,\, \mathbb{R}^d.
\end{equation*}
Now choose $t > 0$ such that $\psi_1(v_2)(x) -  t \psi (x)  \geq 0 $ in $\Bar{B}_r$ and attains its minimum value $0$ in $\Bar{B}_r$. Hence $\psi_1(v_2)(x) -  t \psi (x)  \geq 0 $ in $\mathbb{R}^d$ and attains its
minimum in $\mathbb{R}^d$. Now using $\lambda^+_1 (v_2) \leq \lambda^+(v_1, v_2)$,
it is easy to verify that
\[
{\mathcal L}_1^{v_1, v_2} (\psi_1(v_2)-  t \psi ) - (r_1(x, v_1(x), v_2(x)) - \lambda^+_1(v_2) )^{-}
(\psi_1(v_2)-  t \psi )  \leq 0.
\]
Hence using the  strong maximum principle \cite[Theorem~9.6]{GilbargTrudinger}, we get $\psi_1 (v_2) = t \psi$. Since
$\psi_1 (v_2) (0) = \psi(0) =1$, we have $t=1$. Therefore, it follows that $\lambda^+_1(v_2) =
\lambda_1^+(v_1, v_2)$ and $\hat \psi_n \to \psi_1(v_2)$ in $W^{2, p}_{loc} (\mathbb{R}^d) \cap C(\mathbb{R}^d)$.

Thus we have $\hat \lambda_n \uparrow \lambda^+_1(v_2)$ and along a subsequence
$\hat \psi_n \to \psi_1(v_2)$ in $W^{2, p}_{loc}(\mathbb{R}^d \cap C(\mathbb{R}^d)$. Now combaining (\ref{eq2.1thm4.2}) and (\ref{eq2thm4.2}) we get the required representation. This completes the proof of the theorem.
\end{proof}
\begin{remark}\label{R1A}
Form the proof the Theorem~\ref{stochasticrepeigenfunction}, we conclude that
$\lambda_1^+(v_2) = \lambda^+(v_1, v_2)$ for any minimizing selector $v_1\in\mathcal{S}_1$ of the HJB equation ${\mathcal G}^{v_2}_1 \psi_1(v_2) = \lambda_1^+(v_2)\psi_1(v_2)$\,, where $\lambda_1^+(v_1, v_2)$ is the generalized principle eigenvalue of $\mathcal{L}^{v_1, v_2} + r_1(x, v_1(x), v_2(x))$\,. Similarly, $\lambda_2^+(v_1) = \lambda_2^+(v_1, v_2)$ for any minimizing selector $v_2\in\mathcal{S}_2$ of the HJB equation ${\mathcal G}^{v_1}_2 \psi_2(v_1) = \lambda_2^+(v_1)\psi_2(v_1)$\,, where $\lambda_2^+(v_1, v_2)$ is the generalized principle eigenvalue of $\mathcal{L}^{v_1, v_2} + r_2(x, v_1(x), v_2(x))$\,.
\end{remark}

Now we claim that $\lambda^+_1(v_2), \lambda^+_2(v_1) \geq 0$. If not, suppose that $\lambda^+_1(v_2) < 0$. Then from (\ref{ET4.2AA}), we deduce that $\psi_1(v_2)(x) \geq \min_{B_r}\psi_1(v_2)$ for all $x\in B_r^c$\,. Applying It\^{o}-Krylov formula and Fatou's lemma, from (\ref{ET4.1AA}) it is follows that
\begin{align*}
\psi_1(v_2)(x) &\geq
\mathbb{E}^{v_1, v_2} \Big[ e^{\int^T_0 (r_1(X(t), v_1(X(t)), v_2(X(t))) - \lambda^+_1(v_2)) dt }\psi_1(v_2)(X(T)) \Big]\\
&\geq \min_{B_r}\psi_1(v_2)\mathbb{E}^{v_1, v_2} \Big[ e^{\int^T_0 (r_1(X(t), v_1(X(t)), v_2(X(t))) - \lambda^+_1(v_2)) dt }\Big]\,.
\end{align*}
 Taking logarithm of  both sides, dividing by $T$ and letting $T\to\infty$, we get
\begin{equation}
\lambda^+_1(v_2) \geq \limsup_{T\to\infty}\frac{1}{T}\log \mathbb{E}^{v_1, v_2} \Big[ e^{\int^T_0 r_1(X(t), v_1(X(t)), v_2(X(t))) dt }\Big] \geq 0\,.
\end{equation} This is a contradiction. Thus,  $\lambda^+_1(v_2) \geq 0$. Similarly $\lambda^+_2(v_1) \geq 0$.

Now we show that the map $v_j \mapsto (\lambda^+_i(v_j) , \psi_i (v_j))$ is continuous in the topology of ${\mathcal S}_j$ for $i,j=1,2$\,. This result is useful in establishing the u.s.c. of a certain set-valued map ( to be be introduced soon), which in turn, will ensure the existence of a Nash equilibrium\,.
\begin{theorem}\label{ThmCont}
 Let Assumptions \ref{A1} and \ref{A2} hold. Then the  map  $v_j \mapsto (\lambda^+_i(v_j) , \psi_i (v_j)) $ from ${\mathcal S}_j$ to $\mathbb{R} \times  W^{2, p}_{loc}(\mathbb{R}^d) \cap C(\mathbb{R}^d)$ is continuous for $i,j = 1,2$\,.
\end{theorem}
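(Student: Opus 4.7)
The plan is a compactness-plus-uniqueness argument. Fix $i=1$, $j=2$ by symmetry, and take $v_2^n \to v_2$ in $\mathcal{S}_2$; write $\psi_n \df \psi_1(v_2^n)$ and $\lambda_n \df \lambda^+_1(v_2^n)$. I will show that every subsequence of $(\lambda_n,\psi_n)$ admits a further subsequence converging to $(\lambda^+_1(v_2),\psi_1(v_2))$ in $\RR \times (W^{2,p}_{loc}(\RR^d)\cap C(\RR^d))$, which forces convergence of the full sequence. To compactify, note that $\lambda_n \geq 0$ by the argument following Remark~\ref{R1A}, and $\lambda_n$ is bounded above by \eqref{EEst1} (or by $\|r_1\|_\infty$ under Assumption~\ref{A2}(i)). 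The normalization $\psi_n(0)=1$ together with Harnack's inequality produces uniform two-sided bounds for $\psi_n$ on each bounded domain $D$, and the interior $W^{2,p}$ estimate \cite[Theorem~9.11]{GilbargTrudinger} applied to $\mathcal{G}_1^{v_2^n}\psi_n=\lambda_n\psi_n$ yields $\sup_n \|\psi_n\|_{W^{2,p}(D)} < \infty$ for every bounded $D$ and $p > d$. The diagonal-plus-Banach--Alaoglu procedure of Theorem~\ref{existenceeigenvalue} extracts a subsequence (still indexed by $n$) with $\lambda_n \to \lambda^*$ and $\psi_n \to \psi^*$ weakly in $W^{2,p}_{loc}$ and strongly in $C^{1,\alpha}_{loc}$; Harnack in the limit gives $\psi^* > 0$ and strong convergence at the origin gives $\psi^*(0) = 1$.

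Next, I would linearize through minimizing selectors: for each $n$, pick $v_1^n \in \mathcal{S}_1$ with
\begin{equation*}
\mathcal{L}_1^{v_1^n,v_2^n}\psi_n + r_1(\cdot,v_1^n,v_2^n)\psi_n = \lambda_n\psi_n,
\end{equation*}
and extract (by compactness of $\mathcal{S}_1$) a further subsequence $v_1^n \to v_1^*$. Testing against $\varphi \in C_c^\infty(\RR^d)$, the second-order part passes to the limit by weak $W^{2,p}_{loc}$ convergence, while the first- and zero-order pieces reduce to integrals of the form
\begin{equation*}
\int \varphi(x)\,g_n(x)\iint \bar{F}(x,u_1,u_2)\,v_1^n(x)(du_1)\,v_2^n(x)(du_2)\,dx,
\end{equation*}
where $g_n \in \{\psi_n,\partial_k\psi_n\}$ converges \emph{uniformly} on the compact support of $\varphi$. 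Splitting as $(g_n - g^*)\cdot(\text{measure part}) + g^*\cdot(\text{measure part})$, the first summand is killed by the uniform convergence of $g_n$ and the uniform bound on $\bar F$, while the second is handled by a Stone--Weierstrass approximation of $\bar F$ on the compact set $\mathrm{supp}(\varphi)\times U_1 \times U_2$ by finite sums of products, so that criterion \eqref{convergencecriterion} can be invoked factor by factor (an alternative route is to exploit the stochastic representation of Theorem~\ref{stochasticrepeigenfunction} together with convergence of the associated controlled martingale problem). The outcome is
\begin{equation*}
\mathcal{L}_1^{v_1^*,v_2}\psi^* + r_1(\cdot,v_1^*,v_2)\psi^* = \lambda^*\psi^*.
\end{equation*}

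Finally, I would identify $(\lambda^*,\psi^*)$ as the principal eigenpair of $\mathcal{G}_1^{v_2}$. For any constant strategy $v_1^0 \in V_1 \subset \mathcal{S}_1$, the pre-limit inequality
\begin{equation*}
\lambda_n\psi_n \leq \mathcal{L}_1^{v_1^0,v_2^n}\psi_n + r_1(\cdot,v_1^0,v_2^n)\psi_n
\end{equation*}
passes to the limit by the same argument (now trivially, as only the single weak-$*$ sequence $v_2^n$ appears) to give $\lambda^*\psi^*(x) \leq \mathcal{L}^{v_1^0,v_2(x)}\psi^*(x) + r_1(x,v_1^0,v_2(x))\psi^*(x)$ for a.e. $x$; the pointwise infimum over $v_1^0 \in V_1$ yields $\lambda^*\psi^* \leq \mathcal{G}_1^{v_2}\psi^*$, and the equality already attained at $v_1^*$ supplies the reverse inequality. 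Hence $\mathcal{G}_1^{v_2}\psi^* = \lambda^*\psi^*$ with $\psi^* > 0$ and $\psi^*(0)=1$, so the simplicity/uniqueness portion of Theorem~\ref{existenceeigenvalue} forces $\lambda^* = \lambda^+_1(v_2)$ and $\psi^* = \psi_1(v_2)$, and the standard subsequence principle completes the proof. I expect the principal technical hurdle to lie in the nonlinear limit passage of the drift term: since both $v_1^n$ and $v_2^n$ converge only weak-$*$, the product-measure limit need not equal the product of weak-$*$ limits, and the saving fact is the strong $C^{1,\alpha}_{loc}$ convergence of $\nabla\psi_n$, which is what permits the Stone--Weierstrass / martingale-problem device to close the argument.
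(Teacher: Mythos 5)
Your compactness and limit-passage stage matches the paper's: uniform bounds on $\lambda^+_1(v^n_2)$ from \eqref{EEst1} and nonnegativity, Harnack plus the interior estimate \cite[Theorem~9.11]{GilbargTrudinger} to get local $W^{2,p}$ bounds, extraction of a subsequence with $\psi_n\to\psi^*$ weakly in $W^{2,p}_{loc}$ and strongly in $C^{1,\alpha}_{loc}$, and passage to the limit in the equation to obtain ${\mathcal G}^{v_2}_1\psi^*=\lambda^*\psi^*$, $\psi^*>0$, $\psi^*(0)=1$ (your care with the product of the two weak-$*$ convergent strategy sequences is a detail the paper leaves implicit, and it is fine). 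The gap is in the identification step. The ``simplicity/uniqueness portion'' of Theorem~\ref{existenceeigenvalue} says that for the \emph{fixed} value $\lambda^+_1(v_2)$ the positive normalized eigenfunction is unique; it does \emph{not} say that $\lambda^+_1(v_2)$ is the only $\lambda$ for which ${\mathcal G}^{v_2}_1\psi=\lambda\psi$ admits a positive solution with $\psi(0)=1$ in $W^{2,p}_{loc}(\RR^d)\cap C(\RR^d)$. For such operators on the whole space there is typically a half-line of eigenvalues above $\lambda^+_1(v_2)$, each with its own positive eigenfunction, so from your limit equation and the definition \eqref{principaleigenvalue} you can only conclude $\lambda^*\geq\lambda^+_1(v_2)$; the reverse inequality does not follow from anything you have written.

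That reverse inequality is exactly where the paper's proof does its real work, and where Assumption~\ref{A2} enters beyond bounding $\lambda_n$: using the stochastic representation of Theorem~\ref{stochasticrepeigenfunction}, the Foster--Lyapunov estimates \eqref{ET4.3AA} and Jensen's inequality, one gets the uniform-in-$n$ growth bound $\psi_1(v^n_2)\leq\hat{\kappa}_2\,\sV^{\theta}$ with $\theta\in(0,1)$, hence $\psi^*\leq\hat{\kappa}_2\,\sV^{\theta}$; this permits dominated convergence in the It\^o--Krylov formula and yields the hitting-time inequality \eqref{ET4.3B} for $\psi^*$, which is then compared with the exact representation \eqref{ET4.2AA} of $\psi_1(v_2)$, and the strong maximum principle forces $\psi^*=\psi_1(v_2)$ and so $\lambda^*=\lambda^+_1(v_2)$. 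In other words, what singles out the principal eigenpair in the limit is a minimal-growth-at-infinity property certified by the Lyapunov function, not algebraic uniqueness of the eigenpair. Without this (or an equivalent argument ruling out $\lambda^*>\lambda^+_1(v_2)$), your final step fails, so the proposal as written has a genuine gap even though its first two stages are sound and essentially coincide with the paper's.
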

\begin{proof} Take $i=1, j=2$. Let $v^n_2 \to v_2$ in the topology of stationary Markov strategies. From the above observation and (\ref{EEst1}), we get
\[
0 \leq \lambda^+_1 (v^n_2) \leq  \max\{\kappa_1 + \frac{\tilde{\alpha}}{\min_{\tilde{K}}\mathcal{V}}, \; \|r_1\|_{\infty}\}.
\] Now using Harnack inequality, see \cite[Corollary 8.21, p.199]{GilbargTrudinger}, and the interior estimates \cite[Theorem 9.11, p.235]{GilbargTrudinger}, we get for each bounded domain $D$,  there exists $n_0$ such that
\begin{equation}\label{eq2thm4.1}
\sup_{n \geq n_0} \|\psi_1 (v^n_2) \|_{2, p, D} < \infty.
\end{equation}
Hence, by a standard approximation procedure involving Sobolev imbedding (as in Theorem \ref{existenceeigenvalue}), we obtain the existence of $\psi \in W^{2, p}_{loc} (\mathbb{R}^d) \cap C(\mathbb{R}^d), p \geq 2, \psi > 0$
and a limit point $\lambda$ of $\lambda^+_1(v^n_2)$ satisfying
\begin{equation}\label{eq3thm4.1}
{\mathcal G}^{v_2}_1 \psi \ = \  \lambda \psi \  {\rm in} \  \mathbb{R}^d\,.
\end{equation}
Clearly
\[
\lambda \geq \lambda^+_1(v_2).
\]
Next we prove the reverse inequality. From Assumption \ref{A2}, we deduce that there exist a compact set $\mathcal{B}\, (\supset \tilde{K})$ and a constant $\theta\in (0, 1)$ such that for all large $n\in \NN$
\begin{itemize}
\item under Assumption \ref{A2}(i):\, $(\sup_{u_i\in U_i\,\, i = 1,2}r_1(x, u_1, u_2) - \lambda^+_1(v^n_2)) < \theta \gamma$ for all $x\in \mathcal{B}^c$
\item under Assumption \ref{A2}(ii): $(\sup_{u_i\in U_i\,\, i = 1,2}r_1(x, u_1, u_2) - \lambda^+_1(v^n_2)) < \theta \ell(x)$ for all $x\in \mathcal{B}^c$,.
\end{itemize} Let $r_0 > 0$ be such that $\mathcal{B}\subset B_{r_0}$. Applying It\^{o}-Krylov formula and Fatou's lemma, from (\ref{EA21}) and (\ref{EA22}), for any $(v_1, v_2)\in \mathcal{A}_1\times \mathcal{A}_2$ we deduce that
\begin{equation}\label{ET4.3AA}
\mathbb{E}^{v_1, v_2}_x \Big[ e^{\gamma\breve{\tau}_{r_0}}\sV(X(\breve{\tau}_{r_0})) \Big] \leq \sV(x)\,\,\,\text{and}\,\,\, \mathbb{E}^{v_1, v_2}_x \Big[ e^{\int_{0}^{\breve{\tau}_{r_0}}\ell(X(t)) dt} \sV(X(\breve{\tau}_{r_0})) \Big] \leq \sV(x)\,\,\,\forall\,\, x\in B_{r_0}^c\,.
\end{equation}
Thus, from Theorem \ref{stochasticrepeigenfunction}, for any minimizing selector $v_{1}^n$ of ${\mathcal G}^{v_2^n}_1 \psi_1(v_2^n) = \lambda^+_1(v_2^n) \psi_1(v_2^n)$, and $x\in B_{r_0}^c$,  it follows that
\begin{align}\label{ET4.3A}
\psi_1(v_2^n) (x) \ & =  \  \mathbb{E}^{v_1^n, v_2^n}_x \Big[ e^{\int^{\breve{\tau}_{r_0}}_0 (r_1(X(t), v_1^n(X(t)), v_2^n(X(t))) - \lambda^+_1(v_2^n) ) dt } \psi_1(v_2^n) (X(\breve{\tau}_{r_0})) \Big]\nonumber\\
&\leq\frac{\sup_{B_{r_0}}\psi_1(v_2^n)}{\inf_{B_{r_0}}\sV^{\theta}} \mathbb{E}^{v_1^n, v_2^n}_x \Big[ e^{\theta\breve{\tau}_{r_0}\gamma} \sV^{\theta}(X(\breve{\tau}_{r_0})) \Big]\nonumber\\
&\leq\frac{\sup_{B_{r_0}}\psi_1(v_2^n)}{\inf_{B_{r_0}}\sV^{\theta}} \left(\mathbb{E}^{v_1^n, v_2^n}_x \Big[ e^{\breve{\tau}_{r_0}\gamma} \sV(X(\breve{\tau}_{r_0})) \Big]\right)^{\theta}\quad \text{(by Jensen's inequality)}\nonumber\\
&\leq \hat{\kappa}_2 \sV^{\theta}(x)\quad \text{(by (\ref{ET4.3AA}))},
\end{align}
where one can choose the constant $\hat{\kappa}_2 >0$ independent of $n$ (by Harnack's inequality). This implies that $\psi \leq \hat{\kappa}_2  \sV^{\theta}$ (in the above calculations replacing $\gamma$ by $\ell$, it is easy to see that same estimate holds true under Assumption~\ref{A2}(ii)). Now for any minimizing selector $v_1$ of (\ref{ET4.1AA}), applying It\^{o}-Krylov formula from (\ref{eq3thm4.1}) for some $T>0$ we deduce that
\begin{equation*}
\psi(x) \leq \  \mathbb{E}^{v_1, v_2}_x \Big[ e^{\int^{\breve{\tau}_{r_0}\wedge T}_0 (r_1(X(t), v_1(X(t)), v_2(X(t))) - \lambda) dt } \psi(X(\breve{\tau}_{r_0}\wedge T))\Big]\,.
\end{equation*}
In view of (\ref{ET4.3AA}), since $\psi \leq \hat{\kappa}_2 \sV^{\theta}$, by the dominated convergence theorem letting $T\to \infty$, we get
\begin{align}\label{ET4.3B}
\psi(x) &\leq \  \mathbb{E}^{v_1, v_2}_x \Big[ e^{\int^{\breve{\tau}_{r_0}}_0 (r_1(X(t), v_1(X(t)), v_2(X(t))) - \lambda) dt } \psi(X(\breve{\tau}_{r_0}))\Big]
\nonumber\\
&\leq \  \mathbb{E}^{v_1, v_2}_x \Big[ e^{\int^{\breve{\tau}_{r_0}}_0 (r_1(X(t), v_1(X(t)), v_2(X(t))) - \lambda_{1}^{+}(v_2)) dt } \psi(X(\breve{\tau}_{r_0}))\Big]\,.
\end{align} Thus, from (\ref{ET4.2AA}) (for $i=1, j=2$) and (\ref{ET4.3B}), we have
\begin{equation}\label{ET4.3C}
(\psi_1(v_2) - \psi)(x) \geq \  \mathbb{E}^{v_1, v_2}_x \Big[ e^{\int^{\breve{\tau}_{r_0}}_0 (r_1(X(t), v_1(X(t)), v_2(X(t))) - \lambda_{1}^{+}(v_2)) dt } (\psi_1(v_2) - \psi)(X(\breve{\tau}_{r_0}))\Big]\,.
\end{equation}Let $\Tilde{\kappa}_{2} = \sup_{B_{r_0}}\frac{\psi_1(v_2)}{\psi}$. Hence (\ref{ET4.3C}) implies that $(\psi_1(v_2) - \Tilde{\kappa}_{2}\psi) \geq 0$ in $\mathbb{R}^d$, and for some $x_1 \in B_{r_0}$ we have $(\psi_1(v_2) - \Tilde{\kappa}_{2}\psi)(x_1) = 0$\,. Since $\lambda \geq \lambda^+_1(v_2)$,\, (\ref{eq1thm4.2}) and (\ref{eq3thm4.1}) give us
\[
{\mathcal L}_1^{v_1, v_2} (\psi_1(v_2)-  \Tilde{\kappa}_{2} \psi ) - (r_1(x, v_1(x), v_2(x)) - \lambda^+_1(v_2) )^{-}
(\psi_1(v_2)-  \Tilde{\kappa}_{2} \psi )  \leq 0\,.
\]
Thus, by the  strong maximum principle \cite[Theorem~9.6]{GilbargTrudinger}, we obtain  $\psi_1 (v_2) = \Tilde{\kappa}_{2} \psi$. But, we have $\psi_1 (v_2) (0) = \psi(0) = 1$, this gives $\Tilde{\kappa}_{2}=1$. Therefore, we deduce that $\psi_1 (v_2) = \psi$ and $\lambda^+_1(v_2) \geq
\lambda$. This, in particular, implies that $\lambda^+_1(v_2) = \lambda$. This proves the continuity of the map $v_2 \mapsto (\lambda^+_1(v_2) , \psi_1 (v_2))$ and the continuity of the other maps follows by analogous arguments.
\end{proof}
\begin{remark}\label{EStoRep1A}
For any $v\in \mathcal{S}_1$, by It\^{o}-Krylov formula, from (\ref{ET4.1AA}) we deduce that 
\begin{align}\label{EStoRep1AA}
\psi_1(v_2) (x) \  \leq &  \  \mathbb{E}^{v, v_2}_x \Big[ e^{\int^{\breve{\tau}_r\wedge \tau_n}_0 (r_1(X(t), v(X(t)), v_2(X(t))) - \lambda^+_1(v_2) ) dt } \psi_1(v_2) (X(\breve{\tau}_r\wedge \tau_n) \Big]\nonumber\\
 = & \mathbb{E}^{v, v_2}_x \Big[ e^{\int^{\breve{\tau}_r}_0 (r_1(X(t), v(X(t)), v_2(X(t))) - \lambda^+_1(v_2) ) dt } \psi_1(v_2) (X(\breve{\tau}_r) \Ind_{\{\breve{\tau}_r\leq \tau_n\}}\Big]\nonumber\\
 & +  \mathbb{E}^{v, v_2}_x \Big[ e^{\int^{\tau_n}_0 (r_1(X(t), v(X(t)), v_2(X(t))) - \lambda^+_1(v_2) ) dt } \psi_1(v_2) (X(\tau_n) \Ind_{\{\breve{\tau}_r\geq \tau_n\}}\Big]\,.
\end{align} Since $\psi_1(v_2) \leq \hat{\kappa}_2 \sV^{\theta}$ for some $\theta\in (0, 1)$ (see Theorem~\ref{ThmCont}, eq. (\ref{ET4.3A})), by mimicking the arguments as in the proof of \cite[Theorem~3.2]{SP21A}, it is easy to see that  
\begin{equation*}
\lim_{n\to \infty}\mathbb{E}^{v, v_2}_x \Big[ e^{\int^{\tau_n}_0 (r_1(X(t), v(X(t)), v_2(X(t))) - \lambda^+_1(v_2) ) dt } \psi_1(v_2) (X(\tau_n) \Ind_{\{\tau_n \leq \breve{\tau}_r\}}\Big] = 0\,.
\end{equation*} Thus, by monotone convergence theorem letting $n\to\infty$, from (\ref{EStoRep1AA}) we conclude that
\begin{align}\label{EStoRep1AB}
\psi_1(v_2) (x) \  \leq   \  \mathbb{E}^{v, v_2}_x \Big[ e^{\int^{\breve{\tau}_r}_0 (r_1(X(t), v(X(t)), v_2(X(t))) - \lambda^+_1(v_2) ) dt } \psi_1(v_2) (X(\breve{\tau}_r) \Big] \,.
\end{align}
\end{remark}

Next we show that for each $v_j \in \mathcal{S}_j$ the generalized principal eigenvalue $\lambda^+_i(v_j)$ is the optimal ergodic cost of Player $i$, i.e., $\lambda^+_i(v_j) = \lambda_i (v_j)$,\, $i,j=1,2$\,.
\begin{theorem}\label{RepPrinc1} Suppose that Assumptions \ref{A1} and \ref{A2} hold. Then for $i,j = 1,2$ we have
\[
\lambda^+_i(v_j) = \lambda_i (v_j)\,.
\]
\end{theorem}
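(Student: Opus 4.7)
The strategy is straightforward once the machinery of the previous sections is in place. Theorem \ref{existenceeigenvalue}, specifically the inequality (\ref{EOpt1A}), already furnishes $\lambda^+_i(v_j)\le\lambda_i(v_j)$, so it suffices to exhibit a single stationary Markov strategy $v_i^*\in\mathcal S_i\subset\mathcal A_i$ whose risk-sensitive cost is bounded above by $\lambda^+_i(v_j)$. Once such a $v_i^*$ is produced, the definitions in (\ref{optimalresponses}) force $\lambda_i(v_j)\le\rho_i(x,v_i^*,v_j)\le\lambda^+_i(v_j)$, closing the proof.

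To construct $v_i^*$ I take a measurable selector of the infimum appearing in $\mathcal G_i^{v_j}\psi_i(v_j)$; this selector lies in $\mathcal S_i$ and exists by the standard measurable selection argument using compactness of $V_i$ and joint continuity of the integrand. With this selector in hand, the semilinear eigenequation collapses to the linear identity
\[
\mathcal L^{v_i^*,v_j}\psi_i(v_j)+r_i(x,v_i^*(x),v_j(x))\,\psi_i(v_j)\;=\;\lambda^+_i(v_j)\,\psi_i(v_j),
\]
and Remark \ref{R1A} identifies $\lambda^+_i(v_j)$ with the principal eigenvalue $\lambda^+_i(v_i^*,v_j)$ of the corresponding linear elliptic operator.

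The core step is a Feynman--Kac computation. I apply the It\^o--Krylov formula to the nonnegative process
\[
M_t\df\psi_i(v_j)(X(t))\,\exp\!\Bigl(\int_0^t\bigl(r_i(X(s),v_i^*(X(s)),v_j(X(s)))-\lambda^+_i(v_j)\bigr)\,ds\Bigr)
\]
under the dynamics driven by $(v_i^*,v_j)$. The linearized eigenequation makes the drift of $M_t$ vanish, so $M_t$ is a nonnegative local martingale; stopping at $T\wedge\tau_n$ (with $\tau_n$ the first exit time from $B_n$) yields $\mathbb E^{v_i^*,v_j}_x[M_{T\wedge\tau_n}]=\psi_i(v_j)(x)$ for $x\in B_n$. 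Following the limiting argument already used in the proof of Theorem \ref{stochasticrepeigenfunction} and in Remark \ref{EStoRep1A}, the bound $\psi_i(v_j)\le\hat\kappa_2\sV^{\theta}$ from Theorem \ref{ThmCont}, combined with the Lyapunov estimate (\ref{ET4.3AA}), shows that $\mathbb E^{v_i^*,v_j}_x[M_{\tau_n}\Ind_{\{\tau_n\le T\}}]\to 0$ as $n\to\infty$, while monotone convergence drives the interior contribution to $\mathbb E^{v_i^*,v_j}_x[M_T]$. This gives the clean identity
\[
\psi_i(v_j)(x)\;=\;\mathbb E^{v_i^*,v_j}_x\!\Bigl[e^{\int_0^T(r_i-\lambda^+_i(v_j))\,dt}\,\psi_i(v_j)(X(T))\Bigr].
\]

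The final and most delicate step is to strip off the weight $\psi_i(v_j)(X(T))$ so that the exponential becomes the raw cost $e^{\int_0^T r_i\,dt}$. Harnack's inequality yields $\inf_{B_r}\psi_i(v_j)>0$ on every ball, but because the state space is non-compact one cannot directly bound $\psi_i(v_j)$ from below globally. My plan is to invoke the stochastic representation (\ref{ET4.2AA}) from Theorem \ref{stochasticrepeigenfunction} together with Jensen's inequality applied to $e^{-\lambda^+_i(v_j)\breve\tau_r}$; combined with the exponential moments of $\breve\tau_r$ supplied by the Lyapunov function $\sV$ of Assumption \ref{A2}, this produces a lower bound of the form $\psi_i(v_j)(y)\ge c\,\sV(y)^{-\beta}$ with constants $c,\beta>0$ (and outright $\psi_i(v_j)\ge c>0$ under the bounded-cost hypothesis (\ref{EA21})). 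Using this lower bound and the Lyapunov control on $\sV(X(T))$ supplied by (\ref{ET4.3AA}), one obtains $\mathbb E^{v_i^*,v_j}_x[e^{\int_0^T r_i\,dt}]\le C(x)\,e^{\lambda^+_i(v_j)T}$. Taking $\limsup_{T\to\infty}\tfrac 1T\log$ then yields $\rho_i(x,v_i^*,v_j)\le\lambda^+_i(v_j)$, and combining with the easy direction finishes the proof. The main obstacle is precisely this last comparison in the unbounded-cost regime of Assumption \ref{A2}(ii), where the lower bound on $\psi_i(v_j)$ decays at infinity and must be balanced against the exponential integrability of the cost furnished by the Foster--Lyapunov machinery.
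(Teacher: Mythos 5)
Your first two steps — the easy inequality $\lambda^+_i(v_j)\le\lambda_i(v_j)$ from Theorem~\ref{existenceeigenvalue}, and the construction of a measurable minimizing selector $v_i^*\in\mathcal{S}_i$ followed by the martingale/It\^o--Krylov identity for $\psi_i(v_j)$ — are sound and match the general machinery the paper also uses. The gap is precisely where you flag it, and it is fatal as written: the lower bound on the eigenfunction that you need to strip off the terminal weight does not come out the way you claim. From the stochastic representation \eqref{ET4.2AA} and $r_i\ge 0$ one only gets $\psi_i(v_j)(x)\ge (\inf_{\partial B_r}\psi_i(v_j))\,\mathbb{E}^{v_i^*,v_j}_x[e^{-\lambda^+_i(v_j)\breve\tau_r}]$, and Jensen plus the Lyapunov exponential moments yield something like $\mathbb{E}[e^{-\lambda^+\breve\tau_r}]\ge (\sV(x))^{-\lambda^+/\delta}$ up to constants, a \emph{decaying} polynomial lower bound. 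Your assertion that ``outright $\psi_i(v_j)\ge c>0$ under the bounded-cost hypothesis'' is unjustified and, absent the sign condition $r_i-\lambda^+_i(v_j)\ge 0$ off a compact set, generically false: nothing in Assumption~\ref{A2}(i) forces $r_i$ to dominate the eigenvalue outside a bounded region. With only the polynomial bound $\psi_i(v_j)\ge c\sV^{-\beta}$, the balancing step $\mathbb{E}[e^{\int_0^T r_i\,dt}]\le C(x)e^{\lambda^+_i(v_j)T}$ does not follow from the martingale identity — you would need a H\"older-type argument trading $\sV^\beta(X(T))$ against the exponential cost, and that trade changes the exponent on the cost and on $\sV$, which is not verified and does not obviously close.

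The paper's proof resolves exactly this difficulty by a different device: it perturbs the running cost upward to $r_{i,n}\ge r_i$ (by splicing in $\|r_i\|_\infty+\hat\delta_i$ outside $B_n$ in the bounded case, or adding $\tfrac12(\ell-r_i)^+\Ind_{B_n^c}$ in the unbounded case) so that, by construction, $\inf_{u_1,u_2}r_{i,n}-\lambda^+_{i,n}\ge 0$ outside a fixed compact set $\mathcal{K}$. The stochastic representation then gives the \emph{uniform} lower bound $\psi_{i,n}(x)\ge\inf_{\mathcal{K}}\psi_{i,n}$ for all $x\in\mathcal{K}^c$, which immediately yields $\lambda^+_{i,n}\ge\rho_i(x,\hat v_i,v_j)\ge\lambda_i(v_j)$ for the perturbed problem. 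Passing to the limit $n\to\infty$ and using the stochastic representation together with the strong maximum principle to identify the limiting eigenpair with $(\lambda^+_i(v_j),\psi_i(v_j))$ finishes the proof. This approximation-by-coercive-cost idea is the missing ingredient in your approach; without it, the lower bound on $\psi_i(v_j)$ decays at infinity and cannot be compensated in the manner you propose.
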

\begin{proof}
From the Theorem \ref{existenceeigenvalue}, we have $\lambda^+_i(v_j) \leq \lambda_i (v_j)$\,. Now to prove the reverse inequality, we approximate the running costs in the following way:
\begin{itemize}
\item When the cost is bounded: let $\{\phi_{i,n}\}$ be a sequence of test functions such that $\phi_{i,n} = 1$ in $B_{n}$ and $\phi_{i,n} = 0$ in $B_{n+1}^{c}$. Since  $\|r_{i}\|_{\infty} < \delta$, it is possible to choose constants $\hat{\delta}_{i} > 0$ small enough such that $\|r_{i}\|_{\infty}+\hat{\delta}_{i} < \delta$. For $(x, u_{1}, u_{2})\in \RR^d\times U_{1}\times U_{2}$, set $$r_{i,n}(x, u_{1}, u_{2}) = \phi_{n}(x)r_i(x, u_{1}, u_{2}) + (1-\phi_{n}(x))(\|r_{i}\|_{\infty} + \hat{\delta}_{i}),\,\,\,\forall\,\,\, n\in\mathbb{N}.$$
\item When the cost is unbounded: For $(x, u_{1}, u_{2})\in \RR^d\times U_{1}\times U_{2}$ we define
\begin{equation*}
r_{i,n}(x, u_{1}, u_{2}) = r_{i}(x, u_{1}, u_{2}) + \frac{1}{2}\left(\ell(x) - r_{i}(x, u_{1}, u_{2})\right)^{+}\Ind_{\{B_n^c\}}\,.
\end{equation*}
\end{itemize}
It is easy to see that for $r_{i,n}$ satisfies (\ref{EA23}) for $i =1,2$\,.

Now from Theorem \ref{existenceeigenvalue}, for each $n\in\mathbb{N}$,  there exists $(\lambda_{1,n}^+(v_2), \psi_{1,n}(v_2))\in \mathbb{R} \times W_{loc}^{2, p}(\mathbb{R}^{d})\cap C(\mathbb{R}^d),$ $ 2\leq p < \infty,\, \psi_{1,n}(v_2) > 0,$ satisfying
\interdisplaylinepenalty=0
\begin{align}\label{ET4.4A}
\lambda_{1,n}^+(v_2) \psi_{1,n}(v_2)(x) =& \inf_{v_1 \in V_1} [ {\mathcal L}_1^{v_1, v_2} \psi_{1,n}(v_2) + r_{1,n} (x, v_1, v_2(x)) \psi_{1,n}(v_2) ]\,, \quad\text{with}\,\,\,\psi_{1,n}(0) = \, 1\,,
\end{align}
and
\begin{eqnarray}\label{ET4.4B}
\lambda_{1,n}^+(v_2) \leq \inf_{x\in\mathbb{R}^d}\inf_{v_1\in\mathcal{A}_1}\limsup_{T \to \infty} \frac{1}{ T} \log \mathbb{E}^{v_{1}, v_{2}}_x \Big[ e^{\int^{T}_0 r_{1,n}(X(t), v_{1}(t), v_{2}(X(t)))dt }\Big].
\end{eqnarray} It is clear from our construction that there exists a compact set $\mathcal{K}$ containing $\tilde{K}$ such that $\displaystyle{\inf_{(u_{1}, u_{2})\in U_{1}\times U_{2}}r_{1,n}(x,u_{1},u_{2}) - \lambda_{1,n}^+(v_2)\geq 0}$ for all $x\in\mathcal{K}^{c}$. Under Assumption \ref{A2}(i) one can take $\mathcal{K} = \overline{B}_{n+1}$ and under Assumption \ref{A2}(ii) since $r_{1,n}$ is unbounded and it satisfies (\ref{EA23}) one can suitably choose $\mathcal{K}$ which satisfies the above inequality. Let $$\breve{\tau}(\mathcal{K}) = \inf\{t\geq 0: X(t)\in \mathcal{K}\}.$$
%Without loss of generality we assume that $\hat{K}_{1}\supset \mathcal{B}_{1}$.
Applying It\^{o}-Krylov fromula and Fatous lemma, for any minimizing selector $\hat{v}_1$ of (\ref{ET4.4A}), it follows that
\begin{eqnarray*}
\psi_{1,n}(v_2)(x) & \geq & E^{\hat{v}_{1}, v_{2}}_x \Big[ e^{\int^{\breve{\tau}(\mathcal{K})}_0 (r_{1,n}(X(t), \hat{v}_{1}(X(t)), v_{2}(X(t))) - \lambda_{1,n}^+(v_2)) dt } \psi_{1,n}(v_2)(X(\breve{\tau}(\mathcal{K}))) \Big],\nonumber\\
&\geq &\inf_{\mathcal{K}}\psi_{1,n}(v_2),\,\,\, \forall\,\,\, x\in \mathcal{K}^{c}.
\end{eqnarray*}
Thus, by another application of It$\hat{\rm o}$-Krylov's formula and Fatou's lemma, we deduce that
\begin{eqnarray*}
\psi_{1,n}(v_2)(x) & \geq & E^{\hat{v}_{1}, v_{2}}_x \Big[ e^{\int^{T}_0 (r_{1,n}(X(t), \hat{v}_{1}(X(t)), v_{2}(X(t))) - \lambda_{1,n}^+(v_2)) dt } \psi_{1,n}(v_2)(X(T)) \Big],\nonumber\\
&\geq &\inf_{\mathcal{K}}\psi_{1,n}(v_2)E^{\hat{v}_{1}, v_{2}}_x \Big[ e^{\int^{T}_0 (r_{1,n}(X(t), \hat{v}_{1}(X(t)), v_{2}(X(t))) - \lambda_{1,n}^+(v_2)) dt }\Big].
\end{eqnarray*}
Taking logarithm on both sides, dividing by $T$ and then letting $T\to\infty$, we get
\begin{eqnarray}\label{ET4.4C}
\lambda_{1,n}^+(v_2) &\geq & \limsup_{T \to \infty} \frac{1}{ T} \log E^{\hat{v}_{1}, v_{2}}_x \Big[ e^{\int^{T}_0 r_{1,n}(X(t), \hat{v}_{1}(X(t)), v_{2}(X(t)))dt} \Big],\nonumber\\
&\geq & \limsup_{T \to \infty} \frac{1}{T} \log E^{\hat{v}_{1}, v_{2}}_x \Big[ e^{\int^{T}_0 r_{1}(X(t), \hat{v}_{1}(X(t)), v_{2}(X(t))) dt } \Big].
\end{eqnarray}
As in Theorem \ref{existenceeigenvalue}, using Harnack's inequality and Sobolev estimate from (\ref{ET4.4A}), one can clearly see that $\psi_{1,n}(v_2)$ is uniformly bounded in $W_{loc}^{2,p}(\mathbb{R}^{d}),$ $2\leq p < \infty.$ Thus, along a suitable subsequence $\{\psi_{1,n}(v_2)\}$ converges weakly in $W_{loc}^{2,p}(\mathbb{R}^{d}),$ $2\leq p < \infty,$ to some $\psi_{1,*}(v_2)\in W_{loc}^{2,p}(\mathbb{R}^{d}),$ $2\leq p < \infty,$ and strongly in $C_{loc}^{1,\hat{\alpha}}(\mathbb{R}^{d}),$ $\hat{\alpha}\in (0, 1).$ It is clear from (\ref{ET4.4B}) and (\ref{ET4.4C}), that $\{\lambda_{1,n}^+(v_2)\}$ is a bounded sequence. Thus, along a further subsequence it converges to a constant $\lambda_{1,*}(v_2)$. Now as in Theorem \ref{existenceeigenvalue}, letting $n\to \infty$ in (\ref{ET4.4A}), we get $(\lambda_{1,*}(v_2), \psi_{1,*}(v_2))\in \mathbb{R}\times W_{loc}^{2,p}(\mathbb{R}^{d}),$ $2\leq p < \infty$, satisfies
\begin{align}\label{ET4.4D}
\lambda_{1,*}(v_2)\psi_{1,*}(v_2) =& \inf_{v_1 \in V_1} \left[ {\mathcal L}_1^{v_1, v_2} \psi_{1,*}(v_2) + r_{1} (x, v_1, v_2(x)) \psi_{1,*}(v_2) \right]\nonumber \\
\psi_{1,*}(v_2)(0) =& 1.
\end{align} Following the argument as in Theorem \ref{ThmCont} (see ( \ref{ET4.3A})), one can show that $\psi_{1,n}(v_2)\leq\hat{\kappa}_{2}\sV^{\theta}$, uniformly in $n$ for some constant $\hat{\kappa}_{2} > 0$ and $\theta\in (0, 1)$. This implies that, the limit $\psi_{1,*}(v_2)\leq\hat{\kappa}_{2}\sV^{\theta}$. Let $v_1\in  \mathcal{S}_1$ be a minimizing selector of (\ref{ET4.1AA})\,. Now, by the arguments as in Remark~\ref{EStoRep1A}, for each large $n\in \NN$, we have
\begin{equation}\label{ELimRep1}
\psi_{1,n}(v_2)(x) \leq E^{v_1, v_{2}}_x \Big[ e^{\int^{\breve{\tau}_{r}}_0 (r_{1,n}(X(t), v_1(X(t)), v_{2}(X(t))) - \lambda_{1,n}^+(v_2)) dt } \psi_{1,n}(X(\breve{\tau}_{r})) \Big],\,\,\, \forall\,\,\, x\in B_{r}^{c},
\end{equation}
 for some $r>0$\,. Since $\psi_{1,n}(v_2)\leq\hat{\kappa}_{2}\sV^{\theta}$ ( uniformly in $n$ ), in view of estimates as in (\ref{ET4.3AA}), by the  dominated convergence theorem letting $n\to \infty$ from (\ref{ELimRep1}) we deduce that
\begin{equation}\label{ELimRep2}
\psi_{1,*}(v_2)(x) \leq E^{v_1, v_{2}}_x \Big[ e^{\int^{\breve{\tau}_{r}}_0 (r_{1}(X(t), v_1(X(t)), v_{2}(X(t))) - \lambda_{1,*}(v_2)) dt } \psi_{1,*}(X(\tau_{1}^{c})) \Big],
\end{equation} for all $x\in B_r^c$\,.

From (\ref{ET4.4C}), it is easy to see that $\lambda_{1,*}(v_2) \geq \lambda_1(v_2)$\,. To complete the proof, we have to show that $\lambda_{1}^{+}(v_2)\geq \lambda_{1,*}(v_2).$ If not, let $\lambda_{1}^{+}(v_2) < \lambda_{1,*}(v_2).$ From Theorem \ref{stochasticrepeigenfunction}, we have for $x\in B_{r}^{c}$
\begin{eqnarray}\label{EContRep1A}
\psi_{1}(v_2)(x) & = & E^{v_{1}, v_{2}}_x \Big[e^{ \int^{\breve{\tau}_{r}}_0 (r_{1}(X(t), v_{1}(X(t)), v_{2}(X(t)))- \lambda_{1}^{+}(v_2))dt}\psi_{1}(v_2)(X(\breve{\tau}_{r}))\Big]\nonumber\\
&\geq & E^{v_{1}, v_{2}}_x \Big[e^{ \int^{\breve{\tau}_{r}}_0 (r_{1}(X(t), v_{1}(X(t)), v_{2}(X(t)))- \lambda_{1,*}(v_2))dt}\psi_{1}(v_2)(X(\breve{\tau}_{r}))\Big].
\end{eqnarray}
From (\ref{ELimRep2}) and (\ref{EContRep1A}), it follows that
\begin{equation*}
(\psi_{1}(v_2) - \psi_{1,*}(v_2))(x) \geq E^{v_{1}, v_{2}}_x \Big[e^{ \int^{\breve{\tau}_{r}}_0 (r_{1}(X(t), \hat{v}_{1}(X(t)), \hat{v}_{2}(X(t)))- \lambda_{1,*}(v_2))dt}(\psi_{1} - \psi_{1,*})(X(\breve{\tau}_{r}))\Big].
\end{equation*}This implies that $(\psi_{1}(v_2) - \psi_{1,*}(v_2))(x) \geq 0$ for all $x\in \mathbb{R}^{d}$, if it holds in $B_{r}.$ Now multiplying $\psi_{1,*}(v_2)$ by a suitable positive constant (say, $\hat{k}_{1}=\displaystyle{\inf_{B_{r}}\frac{\psi_{1}(v_2)}{\psi_{1,*}(v_2)}}$), we obtain that $(\psi_{1}(v_2) - \tilde{\psi}_{1,*}(v_2))(x) \geq 0$ in $B_{r}$ and it attains its minimum value $0$ in $B_{r}$, where $\tilde{\psi}_{1,*}(v_2) = \hat{k}_{1}\psi_{1,*}(v_2).$ It is clear that $\tilde{\psi}_{1,*}(v_2)$ also satisfies (\ref{ET4.4C}). Thus, from (\ref{ET4.1AA}) and (\ref{ET4.4C}) (for $\tilde{\psi}_{1,*}$), we obtain
\begin{align*}
& {\mathcal L}_1^{v_1, v_2}(\psi_{1}(v_2) - \tilde{\psi}_{1,*}(v_2)) - (r_{1}(x, v_{1}(x), v_{2}(x)) - \lambda_{1,*}(v_2))^{-}(\psi_{1} - \tilde{\psi}_{1,*})\nonumber\\
&\leq - (r_{1}(x, \hat{v}_{1}(x), \hat{v}_{2}(x)) - \lambda_{1,*}(v_2))^{+}(\psi_{1}(v_2) - \tilde{\psi}_{1,*}(v_2)) \leq 0 \,.
\end{align*} Thus, by an application of the  strong maximum principle as in \cite[Theorem 9.6]{GilbargTrudinger}, we have $\psi_{1}(v_2) = \tilde{\psi}_{1,*}(v_2).$ Since $\psi_{1}(v_2)(0) = \psi_{1,*}(v_2)(0) = 1$, we obtain $\psi_{1}(v_2) = \psi_{1,*}(v_2).$ Hence, from (\ref{ET4.1AA}) and (\ref{ET4.4C}), we deduce that
\[
\lambda_{1,*}(v_2)\psi_{1,*}(v_2) \leq \lambda_{1}^{+}(v_2)\psi_{1,*}(v_2).
\] Since $\psi_{1,*}(v_2)>0$, we conclude that $\lambda_{1}^{+}(v_2) \geq \lambda_{1,*}(v_2).$ This contradicts the fact that $\lambda_{1}^{+}(v_2) < \lambda_{1,*}(v_2).$ Therefore we obtain $\lambda_{1}^{+}(v_2) \geq \lambda_{1,*}(v_2)$. This completes the proof of the theorem.
\end{proof}
\begin{remark}\label{ER1B}
By closely following the arguments as in the proof of the Theorem~\ref{RepPrinc1}, one can conclude that for any $(v_1, v_2)\in \mathcal{S}_1\times\mathcal{S}_2$ the generalized principle eigenvalue $\lambda_{i}^+(v_1, v_2)$ of $\mathcal{L}^{v_1, v_2} + r_i(x, v_1(x), v_2(x))$, satisfies $\lambda_{i}^+(v_1, v_2) = \rho_i(x, v_1, v_2)$ for $i = 1,2$ and $x\in\RR^d$\,.
\end{remark}
\section{Existence of  Nash equilibrium} In this section using Fan's fixed point theorem, we establish the existence of Nash equilibria in the space of stationary Markov strategies. Also, exploiting the stochastic representation of the principal eigenfunctions of the associated coupled HJB equation we completely characterize  all possible Nash equilibria in the space of stationary Markov strategies.

Let $(v_1, v_2) \in \mathcal{S}_1 \times \mathcal{S}_2.$ Define
\begin{equation}\label{optimalresponseset}
N(v_1, v_2) \ = \ N_1(v_2) \times N_2(v_1),
\end{equation}
where
\begin{equation*}\label{optimalrespeonse1}
N_1(v_2) \ = \ \Big\{v^*_1 \in \mathcal S_1 \mid F_1 (x, v^*_1(x), v_2(x)) =
\inf_{v_1 \in V_1} F_1(x,  v_1, v_2(x)) \ {\rm a.e.} \  x \Big\},
\end{equation*}
\begin{equation*}
F_1(x, v_1, v_2(x) ) \ = \ \langle b(x, v_1, \hat v_2(x)) , \nabla \psi_1(v_2) \rangle + r_1(x, v_1, v_2(x))\psi_1(v_2), \,\,\, x \in \mathbb{R}^{d}, \,\, v_1 \in V_1,\,\, v_2 \in \mathcal S_2
\end{equation*} and
\begin{equation*}\label{optimalrespeonse2}
N_2(v_1) \ = \ \Big\{v^*_2 \in \mathcal S_2 \mid F_2 (x, v_1(x), v^*_2(x)) =
\inf_{v_2 \in V_2} F_2(x, v_1(x), v_2) \ {\rm a.e.} \  x \Big\},
\end{equation*} where
\begin{align*}
F_2(x, v_1(x), v_2 ) = \langle b(x, v_1(x) , v_2) , \nabla \psi_2(v_1) \rangle + r_2(x, v_1(x), v_2)\psi_1(v_2),\,\,\, x \in \mathbb{R}^{d},\,\, v_2 \in V_2,\,\, \hat v_1 \in \mathcal S_1\,.
\end{align*}
\vspace{.1in}
By a standard measurable selection theorem (see, \cite{Benes}), it is clear that $N_1(v_2)$ is nonempty. Also, it is easy to see that $N_1(v_2)$ is convex. Under the topology of $\mathcal S_1,$ one can show that $N_1(v_2)$ is closed in $\mathcal S_1,$ hence compact. Similarly, one can show that $N_2(v_1)$ is nonempty, compact, convex subset of $\mathcal S_2.$ Therefore  $N(v_1, v_2)$ is nonempty, convex and compact subset of $\mathcal S_1 \times \mathcal S_2$. To establish the existence of a Nash equilibrium, we next prove the upper semi-continuity (u.s.c.) of the map
$(v_1, v_2) \mapsto N(v_1, v_2) $ from $\mathcal S_1 \times \mathcal S_2 \to
2^{\mathcal S_1 \times \mathcal S_{2}}$. In order to do so we we impose some additive structure on the drift of the state dynamics and the running cost function, which is known as (ADAC) condition, given as follows.
\begin{assumption}\label{A3}
 We assume that $\bar{b}: \mathbb{R}^d \times U_1\times U_2 \to \RR^d$ and $\bar{r}_i : \RR^d \times U_1 \times U_2 \to \RR_{+},\, i = 1, 2$\,, admit the following additive structures given by
\begin{align*}
\bar{b}(x, u_1, u_2) &= \bar{b}_1(x, u_1) + \bar{b}_2(x, u_2)\\
\bar{r}_i(x, u_1, u_2) &= \bar{r}_{i,1}(x, u_1) + \bar{r}_{1,2}(x, u_2)
\end{align*}
where $\bar{b}_1, \bar{b}_2, \bar{r}_{i,1}, \bar{r}_{i,2}$ satisfy the conditions in Assumption \ref{A1}(i)-(ii)\,.
\end{assumption}
\vspace{.1in}
Next lemma shows that our set valued map $(v_1, v_2) \mapsto N(v_1, v_2)$ is upper semi-continuous\,.
\begin{lemma}\label{usc1} Let Assumptions \ref{A1} - \ref{A3} hold. Then the map
$(v_1, v_2) \mapsto N(v_1, v_2)$ from $\mathcal S_1 \times \mathcal S_2 \to
2^{\mathcal S_1 \times \mathcal S_2}$ is u.s.c.
\end{lemma}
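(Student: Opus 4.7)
The plan is to use the standard characterization that, since $\mathcal{S}_1\times\mathcal{S}_2$ is compact metrizable under the weak* topology, a compact-valued map is u.s.c.\ iff its graph is sequentially closed. So I would take sequences $v_1^n\to v_1$ in $\mathcal{S}_1$, $v_2^n\to v_2$ in $\mathcal{S}_2$, and $(w_1^n,w_2^n)\in N(v_1^n,v_2^n)$ with $w_1^n\to w_1^*$ in $\mathcal{S}_1$ and $w_2^n\to w_2^*$ in $\mathcal{S}_2$, and show that $(w_1^*,w_2^*)\in N(v_1,v_2)$. Since $N$ factors as a product $N_1(v_2)\times N_2(v_1)$, it is enough to show $w_1^*\in N_1(v_2)$; then $w_2^*\in N_2(v_1)$ follows by a symmetric argument.

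The first input is the continuity result of Theorem~\ref{ThmCont}: $v_2^n\to v_2$ in $\mathcal{S}_2$ implies $\psi_1(v_2^n)\to\psi_1(v_2)$ in $W^{2,p}_{loc}(\mathbb{R}^d)$ weakly and in $C^{1,\alpha}_{loc}(\mathbb{R}^d)$ strongly, so $\psi_1(v_2^n)$ and $\nabla\psi_1(v_2^n)$ converge to $\psi_1(v_2)$ and $\nabla\psi_1(v_2)$ uniformly on compact sets. The second input is Assumption~\ref{A3}: the additive decomposition of $\bar b$ and $\bar r_1$ lets me split
\begin{equation*}
F_1(x,v_1,v_2(x)) = \bigl[\langle \bar b_1(x,v_1),\nabla\psi_1(v_2)\rangle+\bar r_{1,1}(x,v_1)\psi_1(v_2)\bigr] + \bigl[\langle \bar b_2(x,v_2(x)),\nabla\psi_1(v_2)\rangle+\bar r_{1,2}(x,v_2(x))\psi_1(v_2)\bigr],
\end{equation*}
so the $v_1$-dependent and $v_2$-dependent parts decouple.

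Now, by definition of $N_1$, the pointwise inequality $F_1(x,w_1^n(x),v_2^n(x))\le F_1(x,v_1'(x),v_2^n(x))$ holds a.e.\ for every $v_1'\in\mathcal{S}_1$. I would multiply by a nonnegative compactly supported $f\in L^1(\mathbb{R}^d)\cap L^2(\mathbb{R}^d)$, integrate, and pass to the limit $n\to\infty$ in each of the four resulting integrals. For the $w_1^n$-dependent integrals, I first replace the multipliers $\nabla\psi_1(v_2^n)$ and $\psi_1(v_2^n)$ by their (locally uniform) limits—the error vanishes because $f$ has compact support—and then invoke the convergence criterion \eqref{convergencecriterion} for $w_1^n\to w_1^*$ in $\mathcal{S}_1$, using the continuity of $\bar b_1$ and $\bar r_{1,1}$ in $u_1$. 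For the $v_2^n$-dependent integrals, I apply \eqref{convergencecriterion} symmetrically, using continuity of $\bar b_2$ and $\bar r_{1,2}$ together with the strong local convergence of $\nabla\psi_1(v_2^n)$ and $\psi_1(v_2^n)$ absorbed into the $L^1$-type test function. The limit gives
\begin{equation*}
\int_{\mathbb{R}^d} f(x) F_1(x,w_1^*(x),v_2(x))\,dx \le \int_{\mathbb{R}^d} f(x) F_1(x,v_1'(x),v_2(x))\,dx
\end{equation*}
for every admissible $f$ and every $v_1'\in\mathcal{S}_1$; varying $v_1'$ over a dense countable family and $f$ over a countable separating family yields $F_1(x,w_1^*(x),v_2(x))=\inf_{v_1\in V_1} F_1(x,v_1,v_2(x))$ a.e., i.e.\ $w_1^*\in N_1(v_2)$.

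The main obstacle is the passage to the limit in the coupled term $\int f(x)\langle b(x,w_1^n(x),v_2^n(x)),\nabla\psi_1(v_2^n)(x)\rangle\,dx$, in which both $w_1^n$ and $v_2^n$ converge only in weak* topology. Weak* convergence in each variable is not enough to pass to the limit in a bilinear-in-controls integrand, and this is precisely where Assumption~\ref{A3} is essential: the additive structure replaces the mixed integrand by a sum in which each summand couples only one weak*-convergent sequence of controls with a factor ($\nabla\psi_1(v_2^n)$ or $\psi_1(v_2^n)$) that converges strongly on compacts. After this decoupling, each summand falls within the scope of \eqref{convergencecriterion}, and the limiting argument goes through.
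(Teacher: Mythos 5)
Your proposal is essentially the paper's own argument: it relies on exactly the same three ingredients (compactness of the strategy spaces, the continuity of $v_2\mapsto(\lambda^+_1(v_2),\psi_1(v_2))$ from Theorem~\ref{ThmCont} giving locally uniform convergence of $\psi_1(v^n_2)$ and $\nabla\psi_1(v^n_2)$, and Assumption~\ref{A3} to decouple the bilinear integrand into a $v_1$-part and a $v_2$-part, each pairing a weak*-convergent control sequence with a strongly convergent factor). The only technical difference is the last mile: the paper upgrades the resulting weak $L^2_{loc}$ convergence of $F_1(\cdot,\hat v^n_1(\cdot),v^n_2(\cdot))$ to a.e.\ convergence along a subsequence via Banach--Saks and then passes to the limit pointwise, whereas you keep the inequality in integral form against nonnegative $f\in L^1\cap L^2$ and deduce the pointwise statement afterwards; these are interchangeable devices.

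One small correction at the end: passing from the integral inequality to $F_1(x,w_1^*(x),v_2(x))=\inf_{v_1\in V_1}F_1(x,v_1,v_2(x))$ a.e.\ should not go through a ``dense countable family'' of $v_1'\in\mathcal S_1$, since weak* density in $\mathcal S_1$ does not give density of $\{v_1'(x)\}$ in $V_1$ at a.e.\ $x$. The clean route (and what the paper implicitly does with ``for any $\hat{\bar v}_1$'') is to take a single $v_1'$, namely a measurable minimizing selector $v_1^{\mathrm{opt}}$ of $v_1\mapsto F_1(x,v_1,v_2(x))$: then $F_1(x,w_1^*(x),v_2(x))-F_1(x,v_1^{\mathrm{opt}}(x),v_2(x))\ge0$ pointwise while its integral against every nonnegative $f$ is $\le0$, forcing it to vanish a.e. With that replacement your argument closes.
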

\begin{proof} Consider a sequence $\{(v^n_1, v^n_2)\}_{n}$ in $\mathcal S_1 \times \mathcal S_2$ such that $(v^n_1, v^n_2) \to (v_1, v_2) \in\mathcal S_1 \times \mathcal S_2.$ Choose $\hat{v}^n_1 \in N_1(v^n_2), n \geq 1$. Since $\mathcal S_1$ is compact,
there exists a subsequence (denoting by the same notation without any loss of generality) $\{\hat{v}^n_1\}$ such that $\hat{v}^n_1 \to \hat{v}_1$ for some $\hat{v}_1 \in \mathcal S_1$. Then $(\hat{v}^n_1, v^n_2) \to (\hat{v}_1, v_2)$ in $\mathcal S_1\times\mathcal S_2$. In view of of Assumption \ref{A3}, the continuity results as in Theorem~\ref{ThmCont} and the topology of $\mathcal S_i, i =1,2$, we deduce that
$$\langle b(x, \hat{v}^n_1(x), v^n_2(x)),\nabla \psi_1(v^n_2) \rangle
+ r_1(x, \hat{v}^n_1(x), v^n_2(x))\psi_1(v^n_2)$$
converges weakly in $ L_{loc}^{2} (\mathbb{R}^{d})$ to
$$\langle b(x, \hat{v}_1(x), v_2(x)),\nabla \psi_1(v_2) \rangle
+  r_1(x, \hat{v}_1(x), v_2(x))\psi_1(v_2).$$
Thus, by Banach-Saks theorem \cite{ON84}, there exists a subsequence of the former whose convex combinations converges strongly in $L_{loc}^{2} (\mathbb{R}^{d})$ to the latter. Therefore, along a suitable subsequence of the convergent sequence of convex combinations (without any loss of generality denoting by the same notation), it follows that
\begin{equation}\label{eq1usc}
\lim_{n \to \infty} F_{1}(x, \hat{v}^n_1(x), v^n_2(x))
= F_{1}(x, \hat{v}_1(x), v_2(x)), \ {\rm a.e.\ in}\  x.
\end{equation}
By analogous arguments, for any fixed $\hat{\bar{v}}_1 \in  \mathcal{S}_1,$, we have
\begin{equation}\label{eq2usc}
\lim_{n \to \infty} F_{1}(x,  \hat{\bar{v}}_1(x), v^n_2(x))
= F_{1}(x, \hat{\bar{v}}_1(x), v_2(x)), \ {\rm a.e.\ in}\ x.
\end{equation}
Since $\hat{v}^n_1 \in N_1(v^n_2),$ from the definition of the set $N_1(v^n_2)$ it is easy to see that
\[
F_{1}(x,  \hat{\bar{v}}_1(x), v^n_2(x))  \geq F_{1}(x,  \hat{v}^n_1(x), v^n_2(x)), \quad \text{for all}\,\,\, n \geq 1\,.
\]
Thus, from (\ref{eq1usc}) and (\ref{eq2usc}), we obtain
\[
F_{1}(x,  \hat{\bar{v}}_1(x), v_2(x))  \geq F_{1}(x,  \hat{v}_1(x), v_2(x)) , \quad\text{for any}\,\,\, \hat{\bar{v}}_1 \in \mathcal{S}_1.
\]
This implies that  $\hat{v}_1 \in N_1(v_2)$. By similar argument, one can show that if $\hat{v}^n_2 \in N_2(v^n_1)$ and $\hat{v}^n_2 \to \hat{v}_2$ in $\mathcal{S}_2$ then $\hat{v}_2 \in N_2(v_1)$. This proves that the set valued map is u.s.c. 
\end{proof}
In view of the u.s.c. of the above set valued map, using Fan's fixed point theorem, we now establish the existence of Nash equilibrium in the space of stationary Markov strategies.
 \begin{theorem}\label{maintheorem1} Let Assumptions \ref{A1} - \ref{A3} hold. Then there exists $(v^*_1, v^*_2) \in {\mathcal S}_1 \times {\mathcal S}_2$ such that
\begin{equation*}
\lambda^+_1 (v^*_2) =  \lambda^+_1(v^*_1, v^*_2)\quad\text{and}\quad   \lambda^+_2 (v^*_1) =  \lambda^+_2(v^*_1, v^*_2)\,.
\end{equation*}
In particular, we have $(v^*_1, v^*_2) \in {\mathcal S}_1 \times {\mathcal S}_2$ is a Nash equilibrium.
 \end{theorem}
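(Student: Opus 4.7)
The plan is to apply Fan's fixed point theorem \cite{Fan} to the set-valued map $N : \mathcal{S}_1 \times \mathcal{S}_2 \to 2^{\mathcal{S}_1\times\mathcal{S}_2}$ from (\ref{optimalresponseset}), and then to unpack the resulting fixed point through the stochastic/eigenvalue identifications established in Sections 3--4.

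First I would verify the hypotheses of Fan's theorem. The product $\mathcal{S}_1\times\mathcal{S}_2$ is a nonempty, convex, compact subset of a locally convex Hausdorff topological vector space, since each $\mathcal{S}_i$ sits inside the unit ball of $L^\infty(\mathbb{R}^d;\mathcal{M}_s(U_i))$ with its weak$^*$ topology. The discussion preceding Lemma \ref{usc1} shows that $N_1(v_2)$ and $N_2(v_1)$ are each nonempty (by a standard measurable selection argument, using that $F_i$ is jointly measurable and the infima are attained on the compact sets $V_i$), convex, and closed, hence compact in $\mathcal{S}_i$; therefore $N(v_1,v_2)=N_1(v_2)\times N_2(v_1)$ inherits these properties. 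Lemma \ref{usc1} supplies the upper semi-continuity of $N$. Fan's theorem then delivers $(v_1^*,v_2^*)\in\mathcal{S}_1\times\mathcal{S}_2$ with $(v_1^*,v_2^*)\in N(v_1^*,v_2^*)$.

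Next I would translate this fixed-point property into the two eigenvalue identities in the statement. Membership $v_1^*\in N_1(v_2^*)$ says, pointwise a.e., that $v_1^*(x)$ attains $\inf_{v_1\in V_1}\{\langle b(x,v_1,v_2^*(x)),\nabla\psi_1(v_2^*)\rangle + r_1(x,v_1,v_2^*(x))\psi_1(v_2^*)\}$, i.e. $v_1^*$ is a minimizing selector for $\mathcal{G}_1^{v_2^*}\psi_1(v_2^*)=\lambda_1^+(v_2^*)\psi_1(v_2^*)$. By Remark \ref{R1A}, this forces $\lambda_1^+(v_2^*)=\lambda_1^+(v_1^*,v_2^*)$, the generalized principal eigenvalue of $\mathcal{L}^{v_1^*,v_2^*}+r_1(\cdot,v_1^*,v_2^*)$. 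The symmetric argument for Player 2 gives $\lambda_2^+(v_1^*)=\lambda_2^+(v_1^*,v_2^*)$, establishing the first assertion of the theorem.

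Finally I would promote this pair to a Nash equilibrium in the sense of (\ref{Nashequilibrium}). By Remark \ref{ER1B}, $\lambda_i^+(v_1^*,v_2^*)=\rho_i(x,v_1^*,v_2^*)$ for every $x\in\mathbb{R}^d$ (in particular the cost is independent of $x$ at the equilibrium pair). Combining with Theorem \ref{RepPrinc1}, which identifies $\lambda_1^+(v_2^*)=\lambda_1(v_2^*)=\inf_{x}\inf_{v_1\in\mathcal{A}_1}\rho_1(x,v_1,v_2^*)$, one obtains
\begin{equation*}
\rho_1(x,v_1^*,v_2^*)=\lambda_1^+(v_1^*,v_2^*)=\lambda_1^+(v_2^*)=\lambda_1(v_2^*)\leq \rho_1(x,v_1,v_2^*)
\end{equation*}
for every $v_1\in\mathcal{A}_1$ and $x\in\mathbb{R}^d$, and analogously for Player 2. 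This is exactly (\ref{Nashequilibrium}). The real difficulty has already been absorbed upstream: the main obstacle is the upper semi-continuity of $N$ (Lemma \ref{usc1}), which rests on the continuity of $v_j\mapsto(\lambda_i^+(v_j),\psi_i(v_j))$ from Theorem \ref{ThmCont} together with the additive structure in Assumption \ref{A3} needed to pass $b$ and $r_i$ to the limit through a Banach--Saks convex-combination argument; and the identification of $\lambda_i^+(v_1^*,v_2^*)$ with the actual risk-sensitive ergodic cost via Theorem \ref{RepPrinc1} and Remark \ref{ER1B}, which is what makes the Nash inequality valid against all admissible strategies rather than just stationary Markov ones. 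Once those are in hand, the Fan fixed-point step itself is routine bookkeeping.
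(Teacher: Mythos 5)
Your proposal is correct and follows essentially the same route as the paper's proof: Fan's fixed point theorem combined with the upper semi-continuity from Lemma~\ref{usc1}, then Remark~\ref{R1A} to identify $\lambda_i^+(v_j^*)=\lambda_i^+(v_1^*,v_2^*)$, and finally Theorem~\ref{RepPrinc1} together with Remark~\ref{ER1B} to conclude the Nash inequalities against all admissible strategies. You have merely spelled out the chain $\rho_1(x,v_1^*,v_2^*)=\lambda_1^+(v_1^*,v_2^*)=\lambda_1^+(v_2^*)=\lambda_1(v_2^*)\leq\rho_1(x,v_1,v_2^*)$ more explicitly than the paper does, which is sound.
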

 \begin{proof} From Lemma~\ref{usc1}, we know that the set valued map $(v_1, v_2) \mapsto N(v_1, v_2)$ from $\mathcal S_1 \times \mathcal S_2 \to
2^{\mathcal S_1 \times \mathcal S_2}$ is u.s.c. Thus, by Fan's fixed point theorem \cite{Fan}, there exists a fixed point $(v^*_1, v^*_2) \in \mathcal S_1 \times \mathcal S_2$, of the map $(v_1, v_2) \mapsto N(v_1, v_2)$, i.e., $(v^*_1, v^*_2) \in N (v^*_1, v^*_2)\,.$ Therefore, it follows that $(\lambda_{1}^+(v^*_2),\psi_1(v^*_2)), (\lambda_{2}^+(v^*_1),\psi_2(v^*_1)) \in \RR_+ \times W^{2, p}_{loc} (\mathbb{R}^d) \cap C(\mathbb{R}^d), p \geq 2,$ satisfy the following coupled HJB equations
\begin{align}\label{CoupledHJB1}
\lambda^+_1(v_2^*) \psi_1(v_2^*)(x) \ = \ {\mathcal G}^{v_2^*}_1 \psi_1(v_2^*)(x) \ = \  {\mathcal L}^{v_1^*, v_2^*} \psi_1(v_2^*)(x) + r_{1}(x, v_1^*(x), v_2^*(x))\psi_1(v_2^*)(x)\,,
\end{align} and
\begin{align}\label{CoupledHJB2}
 \lambda^+_2(v_1^*) \psi_2(v_1^*)(x) \ = \ {\mathcal G}^{v_1^*}_2 \psi_2(v_1^*)(x) \ = \  {\mathcal L}^{v_1^*, v_2^*} \psi_2(v_1^*)(x) + r_{2}(x, v_1^*(x), v_2^*(x))\psi_2(v_1^*)(x)\,.
\end{align}
From Remark~\ref{R1A} (also see Theorem~\ref{stochasticrepeigenfunction}), it is easy to see that
\begin{equation*}
\lambda^+_1 (v^*_2) =  \lambda^+_1(v^*_1, v^*_2)\quad\text{and}\quad   \lambda^+_2 (v^*_1) =  \lambda^+_2(v^*_1, v^*_2)\,.
\end{equation*}
Therefore, in view of Theorem~\ref{RepPrinc1} and Remark~\ref{ER1B}, we conclude that
\begin{eqnarray*}
\rho_1 (x, v^*_1, v^*_2) \leq \rho_1 (x, v_1, v^*_2) \quad \text{and}\quad \rho_2 (x, v^*_1, v^*_2) \leq \rho_2 (x, v^*_1, v_2)\,,
\end{eqnarray*} for all $v_1 \in {\mathcal A}_1, v_2 \in {\mathcal A}_2$ and $x \in \mathbb{R}^d.$ This completes the proof of the theorem.
 \end{proof}
%%%%%%%%%%%%%%%%%%%%%%%%%%%%%%%%%%%%%%%%%%%%%%%%%%%%%%%%%%%%%%%%%%%%%%%%%%%%%
In the above theorem we have shown the existence of a  Nash equilibrium in the space of stationary Markov strategies. Conversely,  we now prove that if there exists a Nash equilibrium $(\bar{v}_{1}^{*}, \bar{v}_{2}^{*})\in \mathcal{S}_1\times \mathcal{S}_2$, then $(\bar{v}_{1}^{*}, \bar{v}_{1}^{*})$ is a pair of minimizing selectors of the associated coupled HJB equation\,.
\begin{theorem}
Suppose that Assumptions \ref{A1}- \ref{A3} hold. Then, if $(\bar{v}_{1}^{*}, \bar{v}_{2}^{*})\in \mathcal{S}_{1}\times\mathcal{S}_{2}$ is a Nash equilibrium, i.e.,
\begin{eqnarray*}
\rho_{1}(x,\bar{v}^*_1, \bar{v}^*_2) & \leq & \rho_{1}(x,\bar{v}_1, \bar{v}^*_2) , \ \forall \ \bar{v}_1 \in {\mathcal A}_1, \, x \in \mathbb{R}^d, \\
\rho_{2}(x,\bar{v}^*_1, \bar{v}^*_2) & \leq & \rho_{2}(x,\bar{v}^*_1, \bar{v}_2) , \ \forall \ \bar{v}_2 \in {\mathcal A}_2, \, x \in \mathbb{R}^d,
\end{eqnarray*} then $(\bar{v}_{1}^{*}, \bar{v}_{2}^{*})$ is a pair of  minimizing selector of the corresponding coupled HJB equation
\begin{equation}\label{EEHJB1}
\lambda^+_1(\bar{v}_2^*) \psi_1(\bar{v}_2^*)(x) \ = \ {\mathcal G}^{\bar{v}_2^*}_1 \psi_1(\bar{v}_2^*)(x)\,.
\end{equation}
\begin{equation}\label{EEHJB2}
\lambda^+_2(\bar{v}_1^*) \psi_2(\bar{v}_1^*)(x) \ = \ {\mathcal G}^{\bar{v}_1^*}_2 \psi_2(\bar{v}_1^*)(x)\,.
\end{equation}
\end{theorem}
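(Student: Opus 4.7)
The plan is to reduce the statement to Player $1$; the analogous claim for Player $2$ follows symmetrically. The Nash inequality $\rho_1(x, \bar{v}_1^*, \bar{v}_2^*) \leq \rho_1(x, v_1, \bar{v}_2^*)$ for every $v_1 \in \mathcal{A}_1$ yields $\rho_1(x, \bar{v}_1^*, \bar{v}_2^*) \leq \inf_{v_1\in\mathcal{A}_1}\rho_1(x, v_1, \bar{v}_2^*) = \lambda_1(\bar{v}_2^*)$, while the reverse inequality is automatic since $\bar{v}_1^*\in\mathcal{A}_1$. Combined with Theorem~\ref{RepPrinc1} (which identifies $\lambda_1(\bar{v}_2^*) = \lambda_1^+(\bar{v}_2^*)$), this gives
\[
\rho_1(x, \bar{v}_1^*, \bar{v}_2^*) \;=\; \lambda_1^+(\bar{v}_2^*).
\]
On the other hand, since $(\bar{v}_1^*, \bar{v}_2^*) \in \mathcal{S}_1\times \mathcal{S}_2$, Remark~\ref{ER1B} independently gives $\rho_1(x, \bar{v}_1^*, \bar{v}_2^*) = \lambda_1^+(\bar{v}_1^*, \bar{v}_2^*)$, the generalized principal eigenvalue of the \emph{linear} operator $\mathcal{L}^{\bar{v}_1^*,\bar{v}_2^*} + r_1(\cdot, \bar{v}_1^*, \bar{v}_2^*)$. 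Comparing the two expressions forces $\lambda_1^+(\bar{v}_1^*, \bar{v}_2^*) = \lambda_1^+(\bar{v}_2^*)$.

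Let $\psi^*$ denote the corresponding positive principal eigenfunction of this linear operator, normalized by $\psi^*(0) = 1$, so that
\[
\mathcal{L}^{\bar{v}_1^*, \bar{v}_2^*} \psi^* + r_1(x, \bar{v}_1^*(x), \bar{v}_2^*(x))\, \psi^* \;=\; \lambda_1^+(\bar{v}_2^*)\, \psi^*.
\]
The heart of the argument is to show $\psi^* = \psi_1(\bar{v}_2^*)$, for then substitution into this identity yields
\[
\mathcal{L}^{\bar{v}_1^*, \bar{v}_2^*} \psi_1(\bar{v}_2^*) + r_1(x, \bar{v}_1^*(x), \bar{v}_2^*(x))\, \psi_1(\bar{v}_2^*) \;=\; \lambda_1^+(\bar{v}_2^*)\, \psi_1(\bar{v}_2^*) \;=\; \mathcal{G}_1^{\bar{v}_2^*}\psi_1(\bar{v}_2^*),
\]
exhibiting $\bar{v}_1^*$ as a minimizing selector of \eqref{EEHJB1}. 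To prove the identification, observe that since $\mathcal{G}_1^{\bar{v}_2^*}$ is an infimum over $v_1\in V_1$, the HJB eigenfunction satisfies
\[
\mathcal{L}^{\bar{v}_1^*, \bar{v}_2^*} \psi_1(\bar{v}_2^*) + r_1(x, \bar{v}_1^*(x), \bar{v}_2^*(x))\, \psi_1(\bar{v}_2^*) \;\geq\; \lambda_1^+(\bar{v}_2^*)\, \psi_1(\bar{v}_2^*),
\]
so $\psi_1(\bar{v}_2^*)$ is a positive supersolution of the linear eigenvalue problem at the same eigenvalue as the solution $\psi^*$. Setting $w = \psi^* - t_0\, \psi_1(\bar{v}_2^*)$ with $t_0 := \inf_{\bar{B}_R}\bigl(\psi^*/\psi_1(\bar{v}_2^*)\bigr)$ for $R$ large and subtracting the two relations gives
\[
\mathcal{L}^{\bar{v}_1^*, \bar{v}_2^*} w + r_1(x, \bar{v}_1^*(x), \bar{v}_2^*(x))\, w \;\leq\; \lambda_1^+(\bar{v}_2^*)\, w \qquad \text{on } \bar{B}_R.
\]
Using $w \geq 0$, this rewrites as $\mathcal{L}^{\bar{v}_1^*, \bar{v}_2^*} w - (r_1 - \lambda_1^+(\bar{v}_2^*))^- w \leq 0$; the strong maximum principle \cite[Theorem~9.6]{GilbargTrudinger} (applied exactly as in the simplicity step of Theorem~\ref{existenceeigenvalue}) then forces $w \equiv 0$ on $B_R$. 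Letting $R \to \infty$ and using $\psi^*(0) = \psi_1(\bar{v}_2^*)(0) = 1$ yields $t_0 = 1$ and $\psi^* = \psi_1(\bar{v}_2^*)$ on $\mathbb{R}^d$.

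The main obstacle is precisely this identification step: although $\psi^*$ and $\psi_1(\bar{v}_2^*)$ are both associated with the eigenvalue $\lambda_1^+(\bar{v}_2^*)$, one solves a linear equation while the other solves the nonlinear HJB equation, so neither Perron--Frobenius simplicity nor the nonlinear simplicity of Theorem~\ref{existenceeigenvalue} applies off the shelf. The crucial bridge is the concavity (infimum) structure of $\mathcal{G}_1^{\bar{v}_2^*}$, which automatically renders $\psi_1(\bar{v}_2^*)$ a positive supersolution of the frozen linear operator; this is what allows the $(r_1 - \lambda)^-$ sign-splitting maximum-principle argument developed in Theorem~\ref{existenceeigenvalue} to be transplanted to the present setting.
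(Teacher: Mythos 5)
Your overall route is the same as the paper's: first identify the eigenvalues, $\lambda_1^+(\bar v_1^*,\bar v_2^*)=\lambda_1^+(\bar v_2^*)$ (you do this via Theorem \ref{RepPrinc1} and Remark \ref{ER1B} together with the Nash inequality, the paper via the supersolution inequality, It\^o--Krylov and Remark \ref{R1A}; both are sound, noting that $\rho_1(\cdot,\bar v_1^*,\bar v_2^*)$ is constant in $x$ by Remark \ref{ER1B}), then identify the eigenfunctions, then read off that $\bar v_1^*$ is a minimizing selector of \eqref{EEHJB1}. The gap is in the identification step that you yourself call the heart of the argument. Choosing $t_0=\inf_{\bar B_R}\bigl(\psi^*/\psi_1(\bar v_2^*)\bigr)$ only gives $w=\psi^*-t_0\,\psi_1(\bar v_2^*)\ge 0$ on $\bar B_R$, and the zero minimum of $w$ may be attained only on $\partial B_R$, while outside $B_R$ the sign of $w$ is unknown. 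The strong maximum principle \cite[Theorem~9.6]{GilbargTrudinger} needs the nonpositive minimum to be attained at an \emph{interior} point of a region on which both the differential inequality and $w\ge 0$ hold; as written it does not force $w\equiv 0$ in $B_R$, and letting $R\to\infty$ does not repair this because $t_0$ depends on $R$ and you never control $w$ beyond $\bar B_R$.

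This is precisely the difficulty the paper's proof is built to avoid. It uses the stochastic representations over the hitting time $\breve{\tau}_r$ to a fixed ball: the exact representation \eqref{ET4.2AA} for the principal eigenfunction of the frozen linear operator $\mathcal{L}^{\bar v_1^*,\bar v_2^*}+r_1$, and the It\^o--Krylov upper bound of Remark \ref{EStoRep1A} (as in Theorem \ref{stochasticrepeigenfunction}) for $\psi_1(\bar v_2^*)$ evaluated along the non-minimizing pair $(\bar v_1^*,\bar v_2^*)$, the latter justified by the Lyapunov bound $\psi\le\hat\kappa_2\,\mathcal{V}^{\theta}$ from \eqref{ET4.3A}. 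Subtracting the two representations (with the already established equality of the eigenvalues) shows that nonnegativity of the scaled difference on $\bar B_r$ propagates to all of $\mathbb{R}^d$; the difference is then a nonnegative function on the whole space vanishing at a point of $\bar B_r$, i.e.\ at an interior point of $\mathbb{R}^d$, and only then does the maximum-principle argument of Theorem \ref{ThmCont} yield $\psi^*=\psi_1(\bar v_2^*)$. Your appeal to the simplicity step of Theorem \ref{existenceeigenvalue} does not fill this hole, since what is missing is exactly the globalization of the comparison (or some equivalent minimal-growth-at-infinity control), which in this paper is supplied by the stochastic representation; without it the purely local comparison on $\bar B_R$ does not close.
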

\begin{proof} By limiting arguments as in Theorem \ref{existenceeigenvalue}, for the given pair $(\bar{v}_{1}^{*}, \bar{v}_{2}^{*})\in \mathcal{S}_{1}\times\mathcal{S}_{2},$ one can prove that there exists a principal eigenpair $(\lambda_{1}^+(\bar{v}_{1}^{*},\bar{v}_{2}^{*}), \psi_{1}(\bar{v}_{1}^{*},\bar{v}_{2}^{*}))\in \mathbb{R}_+\times W_{loc}^{2,p}(\mathbb{R}^{d}),$ $\infty > p \geq 2,$ with $\psi_{1}(\bar{v}_{1}^{*},\bar{v}_{2}^{*}) > 0$ satisfying the following
\begin{eqnarray}\label{eigenvaluehjb11characterization}
\lambda_{1}^+(\bar{v}_{1}^{*},\bar{v}_{2}^{*})\psi_{1}(\bar{v}_{1}^{*},\bar{v}_{2}^{*}) & = &  \mathcal{L}^{\bar{v}_{1}^{*},\bar{v}_{2}^{*}}\psi_{1}(\bar{v}_{1}^{*},\bar{v}_{2}^{*}) + r_{1}(x, \bar{v}_{1}^{*}(x),\bar{v}_{2}^{*}(x)) \psi_{1}(\bar{v}_{1}^{*},\bar{v}_{2}^{*})\nonumber \\
\psi_{1}^{\bar{v}_{1}^{*},\bar{v}_{2}^{*}}(0) &=& 1.
\end{eqnarray} From Remark \ref{ER1B}, we deduce that $\lambda_{1}^+(\bar{v}_{1}^{*},\bar{v}_{2}^{*}) = \rho_{1}(x,\bar{v}_{1}^{*},\bar{v}_{2}^{*})$\,. By similar argument as in Theorem~\ref{stochasticrepeigenfunction},we have
\begin{equation}\label{EEigenChar1A}
\psi_1(\bar{v}_{1}^{*},\bar{v}_{2}^{*})(x) \  =  \  \mathbb{E}^{\bar{v}_{1}^{*},\bar{v}_{2}^{*}}_x \Big[ e^{\int^{\breve{\tau}_r}_0 (r_1(X(t), \bar{v}_{1}^{*}(X(t),\bar{v}_{2}^{*}(X(t))) - \lambda^+_1(\bar{v}_{1}^{*},\bar{v}_{2}^{*})) dt } \psi_1(\bar{v}_{1}^{*},\bar{v}_{2}^{*}) (X(\breve{\tau}_r) \Big]\,,
\end{equation}
for some $r > 0$\,. In view of Theorem~\ref{existenceeigenvalue}, for given $\bar{v}_{2}^{*} \in {\mathcal S}_2$,  there exists a principal eigenpair
$(\lambda^+_1(\bar{v}_{2}^{*}), \psi_{1}(\bar{v}_{2}^{*}))\in \mathbb{R}_+ \times W_{loc}^{2, p}(\mathbb{R}^{d}),$\, $\psi_{1}(\bar{v}_{2}^{*}) > 0,$\, $\infty > p \geq 2,$ satisfying
\interdisplaylinepenalty=0
\begin{eqnarray}\label{Eeigencoupchar1}
\lambda^+_1(\bar{v}_{2}^{*})  \psi_{1}(\bar{v}_{2}^{*}) =
\mathcal{G}_{1}^{\bar{v}_{2}^{*}}\psi_{1}(\bar{v}_{2}^{*}) \quad \text{with}\quad \psi_{1}(\bar{v}_{2}^{*})(0) = 1\,.
\end{eqnarray} Remark \ref{R1A} implies that for any minimizing selector $\tilde{v}_{1}^{*}\in\mathcal{S}_{1}$ of (\ref{Eeigencoupchar1}), $\lambda_1^+(\bar{v}_{2}^{*}) = \rho_1(x,\tilde{v}_{1}^{*}, \bar{v}_{2}^{*}).$ From (\ref{Eeigencoupchar1}), it is easy to see that
\begin{align}\label{Eeigencoupchar2}
\lambda^+_1(\bar{v}_{2}^{*})  \psi_{1}(\bar{v}_{2}^{*}) &\leq \mathcal{L}^{\bar{v}_{1}^{*}, \bar{v}_{2}^{*}}\psi_{1}(\bar{v}_{2}^{*}) + r_{1}(x, \bar{v}_{1}^{*}(x), \bar{v}_{2}^{*}(x)) \psi_{1}(\bar{v}_{2}^{*})\,.
\end{align} By It$\hat{\rm o}$-Krylov formula, as in Theorem~\ref{existenceeigenvalue}, we obtain $\lambda^+_1(\bar{v}_{2}^{*}) \leq \rho_1(x, \bar{v}_{1}^{*}, \bar{v}_{2}^{*}).$ 
But we already have $\rho_{1}(x, \bar{v}^*_1, \bar{v}^*_2)\leq \rho_{1}(x, \bar{v}_1, \bar{v}^*_2) , \ \forall \ \bar{v}_1 \in {\mathcal A}_1, \, x \in \mathbb{R}^d.$ Therefore we get $\lambda_1^+(\bar{v}_{2}^{*}) = \rho_1(x,\tilde{v}_{1}^{*}, \bar{v}_{2}^{*}) = \rho_1(x,\bar{v}_{1}^{*}, \bar{v}_{2}^{*}).$ Following the proof of the Theorem~\ref{stochasticrepeigenfunction}, we get
\begin{equation*}
\psi_{1}(\bar{v}_{1}^{*})(x) \leq E^{\bar{v}_{1}^{*}, \bar{v}_{2}^{*}}_x \Big[ e^{\int^{\breve{\tau}_{r}}_0 (r_{1}(X(t), \bar{v}_{1}^{*}(X(t)), \bar{v}_{1}^{*}(X(t))) - \lambda_1^+(\bar{v}_{2}^{*})) dt } \psi_{1}(\bar{v}_{1}^{*})(X(\breve{\tau}_r)) \Big]\,.
\end{equation*}
Now applying the  maximum principle as in Theorem \ref{ThmCont},  one can deduce that $\psi_{1}(\bar{v}_{2}^{*}) = \psi_{1}(\bar{v}_{1}^{*},\bar{v}_{2}^{*}).$ Thus, from (\ref{EEHJB1}) and (\ref{eigenvaluehjb11characterization}), it follows that $\bar{v}_{1}^{*}$ is a minimizing selector of (\ref{EEHJB1}). By similar arguments one can show that $\bar{v}_{2}^{*}$ is a minimizing selector of (\ref{EEHJB2})\,. This completes the proof of the theorem\,.
\end{proof}
%%%%%%%%%%%%%%%%%%%%%%%%%%%%%%%%%%%%%%%%%%%%%%%%%%%%%%%%%%%%%%%%%%%%%%%%%%%%%%
\begin{appendix}
\section{}
In this section we state some important results which we have used in our proofs. First we recall a version of of the nonlinear Krein--Rutman theorem from \cite{A18}.
%%%%%%%%%%%%%%%%%%%%%%%%%%%%%%%%%%%%%%%%%%%%%%%%%%%%%%%%%%%%%%%%%%%%%%%%%%%%%%%%
\begin{theorem}\label{TAA.1}
Let $\mathfrak{C}$ be a nonempty closed cone in an ordered Banach space $\mathfrak{X}$ satisfying $\mathfrak{X} = \mathfrak{C} - \mathfrak{C}$ (where $\mathfrak{C} - \mathfrak{C}:= \{f-g: f,g\in \mathfrak{C}\}$). \,Suppose that $\mathfrak{T}\colon\mathfrak{X}\to\mathfrak{X}$ is order-preserving, $1$-homogeneous, completely continuous map and for some nonzero $f$, and $M>0$, we have $f\preceq M \mathfrak{T} f$. Then there exists $\lambda>0$ and $\phi\ne 0$ in $\mathfrak{C}$ such that $\mathfrak{T} \phi=\lambda \phi$.
\end{theorem}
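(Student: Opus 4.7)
The plan is to apply Schauder's fixed-point theorem to a perturbed, normalized version of $\mathfrak{T}$, and then pass to the limit as the perturbation vanishes. Basic preparatory observations come first: $1$-homogeneity gives $\mathfrak{T}(0)=0$ (take the scalar to be $0$), and this combined with order-preservation yields $\mathfrak{T}(\mathfrak{C})\subseteq\mathfrak{C}$. The hypothesis $\mathfrak{X}=\mathfrak{C}-\mathfrak{C}$ with $\mathfrak{C}$ closed lets one produce, via the open mapping theorem applied to the continuous surjection $(u,v)\mapsto u-v$ from $\mathfrak{C}\times\mathfrak{C}$ onto $\mathfrak{X}$, a decomposition bound $\|u\|+\|v\|\leq c\|x\|$, from which Hahn--Banach extracts a continuous linear functional $\ell$ on $\mathfrak{X}$ that is nonnegative on $\mathfrak{C}$ and satisfies $\ell(f)>0$.

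For each $\epsilon\in(0,1)$, set $\mathfrak{T}_\epsilon(\phi)\df\mathfrak{T}(\phi)+\epsilon f$; this is completely continuous, sends $\mathfrak{C}$ into $\{\phi\in\mathfrak{C}:\phi\succeq\epsilon f\}$, and obeys $\ell(\mathfrak{T}_\epsilon(\phi))\geq\epsilon\,\ell(f)>0$ on $\mathfrak{C}$. On a suitably chosen bounded, closed, convex slice $K\subset\{\phi\in\mathfrak{C}:\ell(\phi)=1\}$ large enough to be invariant, consider the normalization
$$\Psi_\epsilon(\phi)\df\frac{\mathfrak{T}_\epsilon(\phi)}{\ell(\mathfrak{T}_\epsilon(\phi))}.$$
Complete continuity of $\mathfrak{T}$ ensures $\Psi_\epsilon(K)$ is relatively compact in $\mathfrak{X}$, and Schauder's fixed-point theorem (applied to the closed convex hull of $\Psi_\epsilon(K)$, compact by Mazur) yields $\phi_\epsilon\in K$ with
$$\mathfrak{T}(\phi_\epsilon)+\epsilon f=\lambda_\epsilon\phi_\epsilon,\qquad \lambda_\epsilon\df\ell(\mathfrak{T}_\epsilon(\phi_\epsilon))>0.$$
To pass to the limit, I would use $f\preceq M\mathfrak{T}(f)$ together with $1$-homogeneity and order-preservation to derive a uniform lower bound $\lambda_\epsilon\geq c_1>0$ (obtained by iterating the eigen-inequality against $f$ and testing with $\ell$). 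Complete continuity gives a subsequence with $\mathfrak{T}(\phi_{\epsilon_n})\to y$ in norm; the eigenrelation then forces $\phi_{\epsilon_n}\to\phi\df y/\lambda$ in norm (with $\lambda\df\lim\lambda_{\epsilon_n}\geq c_1$), and taking limits in the eigenrelation yields $\mathfrak{T}(\phi)=\lambda\phi$ with $\phi\in\mathfrak{C}$ nonzero, since $\ell(\phi)=1$.

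The main obstacle is the first step: constructing the positive functional $\ell$ and, closely related, choosing a bounded, closed, convex $K\subseteq\mathfrak{C}$ on which Schauder's theorem can be legitimately applied. The generating-cone hypothesis $\mathfrak{X}=\mathfrak{C}-\mathfrak{C}$ is used in an essential way precisely here, through the open mapping theorem's norm-bounded decomposition. Once $\ell$ and $K$ are in place, the Schauder argument at each $\epsilon>0$ and the passage $\epsilon\downarrow 0$ are largely routine given complete continuity and $1$-homogeneity; the condition $f\preceq M\mathfrak{T}(f)$ enters at one critical juncture, namely forcing $\lambda_\epsilon$ uniformly away from zero, which is what guarantees that the limiting eigenvalue $\lambda$ is strictly positive as claimed.
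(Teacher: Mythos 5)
Your overall architecture (perturb by $\epsilon f$, normalize, apply Schauder, force $\lambda_\epsilon$ away from $0$ using $f\preceq M\mathfrak{T}f$, then pass to the limit by complete continuity) is the standard route to this statement; note the paper itself gives no proof of Theorem~\ref{TAA.1} but imports it from \cite{A18}, so there is no internal argument to compare against. However, there is a genuine gap at exactly the step you yourself flag as the main obstacle, and the generating-cone hypothesis does not close it. Separation of $f$ from the closed convex cone $-\mathfrak{C}$ (using properness of the cone, and reading the hypothesis as $f\in\mathfrak{C}\setminus\{0\}$, which you implicitly do) indeed gives some $\ell\in\mathfrak{X}^*$ with $\ell\geq 0$ on $\mathfrak{C}$ and $\ell(f)>0$; the open mapping theorem is not what is needed there. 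But your scheme requires more: a \emph{bounded}, closed, convex set $K\subseteq\{\phi\in\mathfrak{C}:\ell(\phi)=1\}$ with $\Psi_\epsilon(K)\subseteq K$. Boundedness of such a slice amounts to $\ell$ being uniformly positive on the cone ($\ell(\phi)\geq c\norm{\phi}$), i.e.\ to $\mathfrak{C}$ having a bounded base, and a generating cone need not have one. In the very setting where the paper invokes the theorem, $\mathfrak{X}=C_{0}(D)$ with the cone of nonnegative functions, every positive functional is integration against a finite positive measure and unit sup-norm spikes have arbitrarily small integral, so no such $\ell$ exists and every slice $\{\ell=1\}\cap\mathfrak{C}$ is unbounded. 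Quantitatively, if $K\subseteq\{\ell=1\}\cap\mathfrak{C}\cap\bar{B}_R$, the only available lower bound is $\ell(\mathfrak{T}_\epsilon\phi)\geq\epsilon\,\ell(f)$, while $1$-homogeneity gives $\norm{\mathfrak{T}_\epsilon\phi}\leq R\sup_{\norm{\psi}\leq1}\norm{\mathfrak{T}\psi}+\epsilon\norm{f}$; the invariance requirement $\norm{\Psi_\epsilon\phi}\leq R$ then forces $\epsilon\,\ell(f)>\sup_{\norm{\psi}\leq1}\norm{\mathfrak{T}\psi}$, which fails as $\epsilon\downarrow0$ unless $\mathfrak{T}$ is trivial. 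Applying Schauder to $\overline{\mathrm{co}}\,\Psi_\epsilon(K)$ does not help, since you still need a self-map of a convex set.

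The repair is standard and preserves the rest of your argument: normalize by the norm instead of by $\ell$. Take $K=\mathfrak{C}\cap\{\norm{\phi}\leq1\}$ and $\Psi_\epsilon(\phi)=\mathfrak{T}_\epsilon(\phi)/\norm{\mathfrak{T}_\epsilon(\phi)}$, which is well defined because $\mathfrak{T}\phi+\epsilon f=0$ would give $-f\in\mathfrak{C}$, contradicting properness; this maps the closed bounded convex set $K$ into itself, and Schauder yields $\mathfrak{T}\phi_\epsilon+\epsilon f=\lambda_\epsilon\phi_\epsilon$ with $\norm{\phi_\epsilon}=1$, $\lambda_\epsilon=\norm{\mathfrak{T}_\epsilon\phi_\epsilon}$ bounded above on the unit ball. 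The uniform lower bound then needs no functional: from $\lambda_\epsilon\phi_\epsilon\succeq\epsilon f$ and $f\preceq M\mathfrak{T}f$, iteration gives $\phi_\epsilon\succeq\epsilon\lambda_\epsilon^{-1}\sum_{k=0}^{n}(M\lambda_\epsilon)^{-k}f$ for every $n$; if $M\lambda_\epsilon<1$ this yields $\phi_\epsilon\succeq tf$ for all $t>0$, hence $-f\in\mathfrak{C}$ by closedness of the cone, a contradiction, so $\lambda_\epsilon\geq1/M$. With $\lambda_\epsilon$ bounded above and below, your limit passage (compactness of $\mathfrak{T}$, then $\phi_{\epsilon_n}\to\phi$ with $\norm{\phi}=1$ and $\mathfrak{T}\phi=\lambda\phi$) is correct as written.
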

Here $\preceq $ denotes the partial ordering in $\mathfrak{X}$ with respect to the cone $\mathfrak{C}$, i.e., $f \preceq g$ if and only if $g-f \in \mathfrak{C}$. Also, we recall that a map $\mathfrak{T}: \mathfrak{X} \to \mathfrak{X}$ is called completely continuous if it is continuous and compact.
Now we state the Aleksandrov-Bakelman-Pucci (ABP) estimate for certain semi-linear differential operator.
\begin{theorem}\label{TAA.2}
Let $v_j\in \mathcal{S}_j$ and $\bar{r}_i(x, u_1, u_2) \leq 0$ for all $(x, u_1, u_2)\in \RR^d\times U_1\times U_2$ and $i, j = 1,2$\,. Suppose that $\phi\in W^{2, p}_{loc}(D)\cap C(\Bar{D})$, $p>d$, satisfies 
\begin{equation}\label{EABP1A}
\mathcal{G}_i^{v_j} \phi \geq f(x) \quad \text{in\ } \{\phi>0\}\cap D\,, \quad\text{with\ } \phi = 0 \text{\ on\ } \partial{D}\,.
\end{equation}
Then the following inequality holds 
\begin{equation*}
\sup_{D} \phi^+ \,\le\, \sup_{\partial D} \phi^+ + \bar{\kappa} \norm{f^-}_{L^d(D)}\,,
\end{equation*}
for some constant positive constant $\bar{\kappa}$\,.
% which depends on $\Lambda$, $M$, $\diam D$, \cref{A2.1}.
\end{theorem}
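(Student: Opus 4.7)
The plan is to reduce the nonlinear ABP estimate to the classical linear one by selecting a measurable minimizer of $\mathcal{G}_i^{v_j}\phi$ and then exploiting the sign of the zeroth order term on the set $\{\phi>0\}$. Without loss of generality I take $i=1$, $j=2$.

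First, using a standard measurable selection theorem (see \cite{Benes}), since $V_1 = \mathcal{P}(U_1)$ is compact and the map $(x,v_1)\mapsto \mathcal{L}^{v_1,v_2(x)}\phi(x) + \bar r_1(x,v_1,v_2(x))\phi(x)$ is (after measurable-in-$x$, continuous-in-$v_1$ considerations) a Caratheodory function, I would pick $\hat v_1\in\mathcal S_1$ satisfying
\begin{equation*}
\mathcal{G}_1^{v_2}\phi(x) \;=\; \mathcal{L}_1^{\hat v_1,v_2}\phi(x) + r_1(x,\hat v_1(x),v_2(x))\phi(x) \quad\text{a.e.\ in } D.
\end{equation*}
Combining this with the hypothesis $\mathcal{G}_1^{v_2}\phi\geq f$ on $\{\phi>0\}\cap D$ gives the linear inequality
\begin{equation*}
\mathcal{L}_1^{\hat v_1,v_2}\phi \;\geq\; f - r_1(x,\hat v_1(x),v_2(x))\,\phi \qquad\text{a.e.\ in } \{\phi>0\}\cap D.
\end{equation*}

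Next, I would use the sign hypothesis $\bar r_1\le 0$ (hence $r_1(\cdot,\hat v_1,v_2)\le 0$) together with $\phi>0$ on the positive set to absorb the zeroth order term:
\begin{equation*}
\mathcal{L}_1^{\hat v_1,v_2}\phi \;\geq\; f \qquad\text{a.e.\ in } \{\phi>0\}\cap D.
\end{equation*}
The operator $\mathcal{L}_1^{\hat v_1,v_2}=a^{ij}\partial_{ij}+b^i(\cdot,\hat v_1(\cdot),v_2(\cdot))\partial_i$ has bounded measurable coefficients on the bounded domain $D$ (by Assumption~\ref{A1}(i)--(ii)) and is uniformly elliptic there (by Assumption~\ref{A1}(iii)), so it fits exactly the hypotheses of the classical Aleksandrov--Bakelman--Pucci maximum principle, e.g.\ \cite[Theorem~9.1, p.~220]{GilbargTrudinger}. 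Applied to $\phi\in W^{2,p}_{loc}(D)\cap C(\bar D)$ with $p>d$, this yields
\begin{equation*}
\sup_D \phi^+ \;\le\; \sup_{\partial D}\phi^+ + \bar\kappa\,\|f^-\|_{L^d(D)},
\end{equation*}
where $\bar\kappa$ depends only on $d$, $\mathrm{diam}(D)$, the ellipticity constant $C_R^{-1}$ on $D$, and the uniform bound on $|b|$.

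The main obstacle is the measurable selection step: one must verify that the infimum defining $\mathcal{G}_1^{v_2}\phi$ is attained by a jointly measurable $\hat v_1$ and that the resulting drift $x\mapsto b(x,\hat v_1(x),v_2(x))$ is bounded and measurable on $\bar D$ so that the linear ABP estimate applies with a constant independent of the selector. This is standard in stochastic control under Assumption~\ref{A1} (continuity of $\bar b,\bar r_1$ in $(u_1,u_2)$ and compactness of $V_1$), and the rest of the argument is then a direct invocation of the classical ABP inequality on the positivity set of $\phi$ (noting that outside $\{\phi>0\}$ one trivially has $\phi^+=0$, so the estimate extends to all of $D$).
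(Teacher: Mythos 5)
Your approach is essentially the paper's: both exploit the sign condition $\bar r_1\le 0$ on $\{\phi>0\}$ to drop the zeroth-order term and reduce to a classical Aleksandrov--Bakelman--Pucci estimate for a uniformly elliptic operator with bounded drift. The paper skips your measurable-selection step, noting instead that since $\mathcal{G}_1^{v_2}\phi=\inf_{v_1}[\cdots]\ge f$ automatically gives the bound for every $v_1$, one may pass directly to the Pucci-type inequality $a_{ij}\partial_{ij}\phi+M\abs{\grad\phi}\ge f$ with $M=\sup\abs{\bar b}$ and cite \cite[Proposition~3.3]{CCKS96}, which is stated precisely on the positivity set $\{\phi>0\}$, whereas your invocation of \cite[Theorem~9.1]{GilbargTrudinger} additionally needs the (routine) restriction to the open subdomain $\{\phi>0\}\cap D$ where $\phi$ vanishes on the boundary.
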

\begin{proof} 
Since $\bar{b}$ is jointly continuous, $M : = \displaystyle{\sup_{x\in D, u_1\in U_1, u_2\in U_2} \abs{\bar{b}(x,u_1, u_2)} < \infty }$\,. From (\ref{EABP1A}), we deduce that
\begin{equation*}
a_{ij}\frac{\partial^2 \phi}{\partial x_i \partial x_j}(x)+ M\abs{\grad \phi(x)} \,\ge\, f(x) \quad \text{in\ } \{\phi>0\}\cap D\,, \quad\text{with\ } \phi = 0 \text{\ on\ } \partial{D}\,.
\end{equation*} Therefore, the result follows from \cite[Proposition~3.3]{CCKS96}\,.
\end{proof}
We also need the following maximum principle for small domains, which follows form Theorem~\ref{EABP1A}\,.
\begin{lemma}\label{maximumprinciplesmalldomain}
Let $v_j \in {\mathcal S}_j$. Then there exists $\epsilon_0 > 0$ such that if $|D| \leq \epsilon_0$, then any $\varphi \in W^{2, p}_{loc} (D)\cap C(\Bar{D})$
satisfying 
\begin{eqnarray*}
{\mathcal G}^{v_j}_i \varphi  & \geq &  \lambda \varphi,  \  {\rm in} \ D,\\
\varphi &\leq & 0 \ {\rm on} \ \partial D
\end{eqnarray*} for some $\lambda \in \mathbb{R}$, is nonpositive in $D$, where $i\neq j$ and $i, j = 1,2$\,.
\end{lemma}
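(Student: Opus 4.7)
The plan is to reduce the claim to an ABP-type estimate. I would localize to the set $\{\varphi>0\}\cap D$ and show that on this set $\varphi$ satisfies a linear differential inequality with a bounded (in fact, constant) right-hand side, so that a measure-theoretic smallness of $D$ kills the supremum. Without loss of generality take $i=1$, $j=2$; the other case is symmetric.

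First, since ${\mathcal G}^{v_2}_1\varphi(x)=\inf_{v_1\in V_1}\bigl[{\mathcal L}^{v_1,v_2(x)}\varphi(x)+r_1(x,v_1,v_2(x))\varphi(x)\bigr]\geq \lambda\varphi(x)$, the infimum bound yields, for \emph{every} $v_1\in V_1$,
\begin{equation*}
a_{ij}(x)\partial_{ij}\varphi + b_i(x,v_1,v_2(x))\partial_i\varphi + \bigl(r_1(x,v_1,v_2(x))-\lambda\bigr)\varphi \;\geq\; 0 \quad\text{on } \{\varphi>0\}\cap D.
\end{equation*}
Pick any measurable selector $\hat v_1$; by local Lipschitz continuity of $\bar b$ and $\bar r_1$ and compactness of $\bar D\times U_1\times U_2$, there are finite constants $M=\sup|\bar b|$ and $C_\lambda=\sup\,(\bar r_1-\lambda)^+$ on $\bar D\times U_1\times U_2$.

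Next, I would bound the zero-order term from below by a constant: on $\{\varphi>0\}$,
$(r_1(\hat v_1)-\lambda)\varphi \;\leq\; C_\lambda\varphi \;\leq\; C_\lambda \sup_D\varphi^+$,
and the drift term by $b_i\partial_i\varphi \leq M|\nabla\varphi|$. Hence
\begin{equation*}
a_{ij}\partial_{ij}\varphi + M|\nabla\varphi| \;\geq\; -C_\lambda\sup_D\varphi^+ \quad\text{on } \{\varphi>0\}\cap D.
\end{equation*}
This places $\varphi$ in the scope of the classical Aleksandrov--Bakelman--Pucci estimate (\cite[Proposition~3.3]{CCKS96}, i.e., the ingredient used in the proof of Theorem~\ref{TAA.2}) with right-hand side equal to the constant $-C_\lambda\sup_D\varphi^+$. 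This yields
\begin{equation*}
\sup_{D}\varphi^+ \;\leq\; \sup_{\partial D}\varphi^+ + \bar\kappa\,C_\lambda\sup_D\varphi^+\,\babs{\{\varphi>0\}\cap D}^{1/d} \;\leq\; \bar\kappa\,C_\lambda\,|D|^{1/d}\sup_D\varphi^+,
\end{equation*}
where I used $\varphi\leq 0$ on $\partial D$, so $\sup_{\partial D}\varphi^+=0$.

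Finally, choosing $\epsilon_0>0$ small enough that $\bar\kappa\,C_\lambda\,\epsilon_0^{1/d}\leq \tfrac12$, the inequality forces $\sup_D\varphi^+\leq \tfrac12\sup_D\varphi^+$, hence $\sup_D\varphi^+=0$ and $\varphi\leq 0$ in $D$, completing the proof. The only potential subtlety is ensuring $C_\lambda$ is genuinely finite and the ABP constant $\bar\kappa$ depends only on dimension, ellipticity, and the bound $M$ (not on $\varphi$), so that $\epsilon_0$ depends only on the data and $\lambda$; this is exactly the content of the cited ABP result under Assumption~\ref{A1}(i)--(iii).
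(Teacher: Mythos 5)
Your argument is correct and is essentially the paper's own proof: the paper also absorbs the zero-order term into the right-hand side (taking $f=-(\norm{c}_\infty+|\lambda|)\,|\varphi|$, so $f^-=(\norm{c}_\infty+|\lambda|)\varphi^+$ on $\{\varphi>0\}$), applies the ABP estimate of Theorem~\ref{TAA.2} (i.e.\ \cite[Proposition~3.3]{CCKS96}) to get $\sup_D\varphi^+\le \hat K\,\norm{\varphi^+}_{L^d(D)}\le \hat K\,|D|^{1/d}\sup_D\varphi^+$, and then chooses $\epsilon_0=(2\hat K)^{-d}$ to force $\sup_D\varphi^+=0$. Your observation that the inequality $\mathcal{G}^{v_2}_1\varphi\ge\lambda\varphi$ passes to every fixed $v_1\in V_1$, and that the resulting constant depends only on the data and $\lambda$, matches the paper's reasoning.
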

\begin{proof} 
Take $-(\norm{c}_\infty + |\lambda|) |\varphi| = f$ and $M = \displaystyle{\sup_{D\times U_1\times U_2} \abs{b(x, u_1, u_2)}}$.
Since on $\{\phi > 0\}$,  we have $f^-= (\norm{c}_\infty + |\lambda|)\varphi^+$. Thus, from \ref{EABP1A}, we get
\begin{equation*}
\sup_{D} \varphi^{+} \leq \sup_{\partial D} \varphi^{+} + \hat{K} \| \varphi^{+}\|_{L^d(D)},
\end{equation*}
for some constant $\hat{K} (> 0)$. Now for the choice  $\epsilon_0 = (2 \hat{K})^{-d}$, it follows that for $|D| \leq \epsilon_0$,
\begin{equation*}
\sup_{D} \varphi^{+} \leq \frac{1}{2} \sup_{D} \varphi^{+}\,,
\end{equation*} which is possible only when $\displaystyle{\sup_{D} \varphi^{+} = 0}$\,. Hence $\varphi \leq 0$ in $D$. This completes the proof.
\end{proof}
In view of the above lemma we have the following results. This is useful in establishing simplicity of the generalized principal eigenvalue of smooth bounded domains $D$\,. The proof of the following theorem follows form \cite[Theorem 4.1]{QuaasSirakov}
\begin{theorem}\label{auxillaryresult} Let $v_j \in {\mathcal S}_j$ and
 $\varphi, \psi \in W^{2, p}_{loc}(D) \cap C(\Bar{D}), p \geq d$  satisfies for some
 $\lambda \in \mathbb{R}$
 \begin{eqnarray*}
 {\mathcal G}^{v_j}_i \psi & \leq & \lambda \psi,  \  \psi > 0  \ {\rm in} \ D,\\
 {\mathcal G}^{v_j}_i \varphi & \geq & \lambda \varphi \  {\rm in} \  D,\\
 \varphi & \leq & 0 \ {\rm on} \ \partial D, \  \varphi (x_0) > 0 ,
 \end{eqnarray*}
 for some $x_0 \in D$, then $\psi = t \varphi$ for some $t > 0$, where $i\neq j$ and $i,j = 1,2$\,.
 \end{theorem}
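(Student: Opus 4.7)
The plan is the classical translation-plus-maximum-principle argument for simplicity of principal eigenvalues, with the nonlinear operator $\mathcal{G}_i^{v_j}$ linearized through a measurable minimizing selector. First I would define the critical scaling
\[
t^\ast \;:=\; \inf\{\,t > 0 \,:\, t\psi \ge \varphi \text{ in } D\,\}.
\]
The infimum is well defined and positive: since $\psi$ attains a positive minimum on compact subsets of $D$ and $\varphi$ is bounded on $\Bar{D}$ (with $\varphi \le 0$ on $\partial D$), for large $t$ the inequality $t\psi \ge \varphi$ holds throughout $\Bar{D}$; and evaluating at $x_0$ forces $t^\ast \ge \varphi(x_0)/\psi(x_0) > 0$. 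By continuity the function $w := t^\ast \psi - \varphi$ is nonnegative on $\Bar{D}$, and the minimality of $t^\ast$ forces $\inf_{\Bar{D}} w = 0$. The goal becomes showing $w \equiv 0$ in $D$, which gives $\psi = t\varphi$ with $t = 1/t^\ast > 0$.

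To turn this into a usable linear problem, I would pick a measurable minimizing selector $v_i^\ast \colon D \to V_i$ for $\psi$ in the operator $\mathcal{G}_i^{v_j}$; this exists by a standard measurable selection theorem (cf.\ \cite{Benes}). Then
\[
\mathcal{L}^{v_i^\ast, v_j}\psi + r_i(\cdot, v_i^\ast, v_j)\psi \;=\; \mathcal{G}_i^{v_j}\psi \;\le\; \lambda \psi,
\]
whereas the same $v_i^\ast$ is only a feasible point for the infimum defining $\mathcal{G}_i^{v_j}\varphi$, yielding
\[
\mathcal{L}^{v_i^\ast, v_j}\varphi + r_i(\cdot, v_i^\ast, v_j)\varphi \;\ge\; \mathcal{G}_i^{v_j}\varphi \;\ge\; \lambda \varphi.
\]
Subtracting (using linearity of $\mathcal{L}^{v_i^\ast, v_j}$) and grouping the zero-order term,
\[
\mathcal{L}^{v_i^\ast, v_j} w \,-\, \bigl(\lambda - r_i(\cdot, v_i^\ast, v_j)\bigr)^{+} w \;\le\; 0 \quad\text{in } D,
\]
a linear elliptic inequality with bounded measurable coefficients and nonpositive zero-order coefficient, with $w \in W^{2,p}_{\mathrm{loc}}(D)\cap C(\Bar{D})$. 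If $w$ attains the value $0$ at some interior point $y_0 \in D$, the strong maximum principle \cite[Theorem~9.6]{GilbargTrudinger} applied to $-w$ (whose nonnegative maximum $0$ is attained at $y_0$) forces $w \equiv 0$ on the connected component of $D$ containing $y_0$, and hence on $D$, completing the proof.

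The delicate point, and in my view the main obstacle, is excluding the scenario in which every zero of $w$ on $\Bar{D}$ lies on $\partial D$, so that $w > 0$ in $D$ but $w(y_0) = 0$ at some $y_0 \in \partial D$; necessarily then $\psi(y_0) = \varphi(y_0) = 0$. Because $\psi$ and $\varphi$ are only $W^{2,p}_{\mathrm{loc}}(D) \cap C(\Bar{D})$ and need not be differentiable at $\partial D$, the classical Hopf lemma is not directly available. Following \cite[Theorem~4.1]{QuaasSirakov}, I would rule this case out by combining a barrier construction in a small ball satisfying the interior-sphere condition at $y_0$ with the ABP-type small-domain maximum principle (Lemma~\ref{maximumprinciplesmalldomain}) applied to the linearized inequality: this yields a quantitative Hopf-type lower bound on $w$ near $y_0$ strong enough to guarantee that $w - \delta\psi \ge 0$ throughout $D$ for some small $\delta > 0$, which contradicts the minimality of $t^\ast$. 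Once the boundary-touching case is excluded the interior strong maximum principle step above concludes $w \equiv 0$, and the theorem follows.
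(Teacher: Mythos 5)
Your overall strategy --- a critical scaling $t^\ast$, linearization via a measurable minimizing selector for $\psi$, and the strong interior maximum principle --- is the right one and is in the spirit of Quaas--Sirakov \cite[Theorem~4.1]{QuaasSirakov}, which is exactly what the paper's (very short) proof invokes. However, your very first step has a genuine gap: you assert that for large $t$ the inequality $t\psi \ge \varphi$ holds throughout $\Bar D$, so that $t^\ast = \inf\{t>0 : t\psi \ge \varphi \text{ in } D\}$ is well defined, reasoning that $\psi$ attains positive minima on compact subsets and $\varphi$ is bounded with $\varphi \le 0$ on $\partial D$. This does not follow. The hypothesis gives $\psi>0$ \emph{in} $D$ only; $\psi$ may vanish on $\partial D$, and $\varphi$, while nonpositive on $\partial D$, may remain positive arbitrarily close to the boundary. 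If both decay to zero there at unrelated rates, $\sup_D \varphi/\psi$ need not be finite and the set you infimize over could be empty. This boundary comparison is precisely what the small-domain ABP machinery must supply; it is not merely the Hopf-type refinement you postpone to the later "delicate point" --- without it, $w=t^\ast\psi - \varphi$ may not even be defined as a nonnegative function.

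The paper's route, following \cite[Theorem~4.1]{QuaasSirakov}, sidesteps this cleanly: choose a compact $C\subset D$ with $\abs{D\setminus C}\le\epsilon_0$, the constant of Lemma~\ref{maximumprinciplesmalldomain}, and take $t^\ast=\sup_C(\varphi/\psi)$, which is automatically finite because $\psi$ is bounded below on the compact $C$. Then $w = t^\ast\psi - \varphi \ge 0$ on $C$ with $w=0$ somewhere on $C$; the linearized inequality together with the small-domain maximum principle on the collar $D\setminus C$, with data $w\ge 0$ on $\partial C\cup\partial D$, propagates $w\ge 0$ to all of $D$. By construction the zero of $w$ lies in $C$, hence in the interior of $D$, so the strong maximum principle applies directly and the boundary-Hopf discussion becomes unnecessary. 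Your barrier-plus-ABP Hopf sketch draws on the right tools but is too vague to evaluate as written, and in any case is not needed once the scaling is set up on a compact subset. With step one repaired along these lines, the remainder of your argument (the linearization $\mathcal{L}^{v_i^\ast,v_j}w-(\lambda-r_i)^+ w\le 0$ and the interior strong maximum principle) is correct.
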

\begin{proof}
 Choose a compact $C \subset D$ such that $|D \setminus C| \leq \epsilon_0$,
 where $\epsilon_0$ is given by Lemma \ref{maximumprinciplesmalldomain}.
Then, following the proof of \cite[Theorem 4.1]{QuaasSirakov} and using the small domain maximum principle as in Lemma~\ref{maximumprinciplesmalldomain} the result follows.
\end{proof}
\end{appendix}
%\section*{Acknowledgement}
% The research of Chandan Pal is partially supported by SERB, India, grant MTR/2021/000307.
%%%%%%%%%%%%%%%%%%%%%%%%%%%%%%%%%%%%%%%%%%%%%%%%%%%%%%%%%%%%%%%%%%%%%%%%%%%%%%

\end{document}